\documentclass[10pt,reqno]{amsart}
\usepackage{amscd,amssymb,amsthm}
\usepackage{graphicx}

\setlength{\paperwidth}{210mm}
\setlength{\paperheight}{297mm}
\setlength{\oddsidemargin}{0mm}
\setlength{\evensidemargin}{0mm}
\setlength{\topmargin}{-20mm}
\setlength{\headheight}{20mm}
\setlength{\headsep}{17mm}
\setlength{\textwidth}{160mm}
\setlength{\textheight}{220mm}
\setlength{\footskip}{20mm}
\setlength{\marginparwidth}{0mm}
\setlength{\marginparsep}{0mm}

\usepackage{enumerate}
\theoremstyle{plain}
\newtheorem{theorem}{Theorem}[section]
\newtheorem{corollary}[theorem]{Corollary}
\newtheorem{lemma}[theorem]{Lemma}
\newtheorem{definition}[theorem]{Definition}

\newtheorem{proposition}[theorem]{Proposition}

\theoremstyle{definition}
\newtheorem{remark}[theorem]{Remark}
\newtheorem{application}{Application}

\numberwithin{equation}{section}

\newcommand{\floor}[1]{\left\lfloor{#1}\right\rfloor}
\newcommand{\reel}{\mathbb{R}}
\newcommand{\nat}{\mathbb{N}}
\newcommand{\ent}{\mathbb{Z}}
\newcommand{\comp}{\mathbb{C}}

\newcommand{\ds}{\displaystyle}
\newcommand{\vf}{\varphi}
\newcommand{\eps}{\varepsilon}
\newcommand{\abs}[1]{\left\vert #1\right\vert }
\newcommand{\adh}[1]{\overline{#1} }

\newcommand{\com}[2]{\binom{#1}{#2}}
\newcommand{\bg}{\medskip\goodbreak}
\newcommand{\bn}{\medskip\nobreak}
\newcommand{\impl}{{\quad\Longrightarrow\quad}}
\newcommand{\vers}{{\,\longrightarrow\,}}
\newcommand\egdef{{\,{\buildrel\rm def\over=}\,}}

\DeclareMathOperator{\Log}{Log}

\newcommand{\itemref}[1]{(\textit{\ref{#1}}\,)}

\newcommand*\circled[1]{\text{\raisebox{0.5pt}{\textcircled{\raisebox{-0.9pt}{#1}}}}}

\newenvironment{enumeratea}{\begin{enumerate}%
	[\upshape (a)]}{\end{enumerate}}
\newenvironment{enumeratei}{\begin{enumerate}%
	[\itshape i.]}{\end{enumerate}}
\setcounter{tocdepth}{3}

\let\oldtocsection=\tocsection

\let\oldtocsubsection=\tocsubsection

\let\oldtocsubsubsection=\tocsubsubsection

\renewcommand{\tocsection}[2]{\hspace{0em}\oldtocsection{#1}{#2}}
\renewcommand{\tocsubsection}[2]{\hspace{2em}\oldtocsubsection{#1}{#2}}
\renewcommand{\tocsubsubsection}[2]{\hspace{4em}\oldtocsubsubsection{#1}{#2}}

\title[Bernoulli Polynomials and Applications]
{Lecture notes \\
~\\
{\Large Bernoulli Polynomials} \\
{\Large and Applications}}
\author[Omran Kouba]{Omran Kouba$^\dag$}
\address{Department of Mathematics \\
Higher Institute for Applied Sciences and Technology\\
P.O. Box 31983, Damascus, Syria.}
\email{omran\_kouba@hiast.edu.sy}
\keywords{Fourier series, analytic functions, power series expansion, Bernoulli  polynomials, Bernoulli numbers,
harmonic numbers, asymptotic expansion, numerical quadrature, Riemann sum, trapezoidal rule, Simpson's rule, Gauss quadrature rule,
Romberg's rule, sum of cosecants, sum of cotangents}
\subjclass[2010]{11B68, 11L03, 30D10, 32B05, 43E05, 42A16, 65D32.}
\thanks{$^\dag$ Department of Mathematics, Higher Institute for Applied Sciences and Technology.}

\begin{document}
\begin{abstract}
In this lecture notes we try to familiarize the audience with the theory of Bernoulli polynomials; we study their properties,
and we give, with proofs and references, some of the most relevant results related to them. 
Several applications to these polynomials are presented, including a unified approach to the asymptotic
 expansion of the error term in many numerical quadrature formul\ae,
and many  new and sharp inequalities, that bound some trigonometric sums.
\end{abstract}
\smallskip\goodbreak

\maketitle
\tableofcontents

\newpage
\section{Introduction}\label{sec1}
\bg
\parindent=0pt
\qquad There are many ways to introduce Bernoulli polynomials and numbers. We opted for the algebraic approach  
relying on the difference operator. But first, let us introduce some notation.

\qquad Let the real vector space of polynomials with real coefficients be denoted by $\reel[X]$. For
a nonnegative integer $n$, let 
$\reel_n[X]$ be the subspace of   $\reel[X]$ consisting of polynomials of degree smaller or equal to $n$.\bg
\qquad If $P$ is a polynomial from $\reel[X]$, we define $\Delta P\egdef P(X+1)-P(X)$, and we denote by $\Delta$
the linear operator, defined on $\reel[X]$, by $P\mapsto\Delta P$.\bg
\begin{lemma}\label{lm11}
The linear operator $\Phi$ defined by
\begin{equation}
\Phi:\reel[X]\vers \reel[X]\times\reel,P\mapsto\left(\Delta P,\int_0^1P(t)\,dt\right)
\end{equation}
is bijective.
\end{lemma}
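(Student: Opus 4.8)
The plan is to exploit the fact that the difference operator $\Delta$ lowers the degree of a non\-constant polynomial by exactly one, so that $\Phi$ is compatible with the filtration of $\reel[X]$ by the subspaces $\reel_n[X]$, and then to reduce the whole statement to an elementary finite\-dimensional linear\-algebra count on each $\reel_n[X]$.

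First I would record the degree bookkeeping. For $n\geq 1$ one has $\Delta(X^n)=(X+1)^n-X^n=nX^{n-1}+(\text{terms of degree}<n-1)$, while $\Delta$ kills the constants. Hence $\Delta$ maps $\reel_n[X]$ into $\reel_{n-1}[X]$, and a short induction on $n$ (subtract off a suitable multiple of $\Delta(X^n)$ to reduce the degree) shows that $\Delta\colon\reel_n[X]\vers\reel_{n-1}[X]$ is surjective. Consequently $\Phi$ maps $\reel_n[X]$ into $\reel_{n-1}[X]\times\reel$, a space of dimension $n+1=\dim\reel_n[X]$.

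Next I would prove injectivity of $\Phi$ on all of $\reel[X]$. If $\Phi(P)=(0,0)$, then $\Delta P=0$ gives $P(k)=P(0)$ for every $k\in\ent$ by an immediate induction, so $P-P(0)$ has infinitely many roots and therefore vanishes; thus $P$ is the constant $P(0)$, and then $0=\int_0^1P(t)\,dt=P(0)$ forces $P=0$. In particular the restriction $\Phi\vert_{\reel_n[X]}\colon\reel_n[X]\vers\reel_{n-1}[X]\times\reel$ is an injective linear map between spaces of the same finite dimension $n+1$, hence bijective, for every $n$.

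Finally I would assemble the global conclusion: since $\reel[X]=\bigcup_{n\geq 0}\reel_n[X]$ and $\reel[X]\times\reel=\bigcup_{n\geq 1}\bigl(\reel_{n-1}[X]\times\reel\bigr)$, surjectivity of each $\Phi\vert_{\reel_n[X]}$ yields surjectivity of $\Phi$, and injectivity was already shown, so $\Phi$ is bijective. I do not expect a real obstacle here; the only point needing a moment's care is the degree count — checking that the leading coefficient of $\Delta(X^n)$ is $n\neq 0$, so that $\Delta$ drops the degree by exactly one and the dimensions $\dim\reel_n[X]$ and $\dim(\reel_{n-1}[X]\times\reel)$ indeed match.
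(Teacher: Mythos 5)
Your proposal is correct and follows essentially the same route as the paper: prove injectivity by observing that $\Delta P=0$ forces $P-P(0)$ to have infinitely many roots and then use the integral condition, and obtain surjectivity from the fact that $\Phi$ maps $\reel_n[X]$ into $\reel_{n-1}[X]\times\reel$, two spaces of equal dimension $n+1$, so the injective restriction is bijective. The only difference is cosmetic: your separate induction showing $\Delta\colon\reel_n[X]\vers\reel_{n-1}[X]$ is surjective is redundant, since the dimension count already delivers surjectivity of $\Phi$ on each level of the filtration.
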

\begin{proof}
 Consider $P\in\ker \Phi$, then $P\in\ker \Delta$ and $\int_0^1P(t)\,dt=0$. Now, if we consider ${Q(X)=P(X)-P(0)}$, then clearly
we have
\[
Q(X+1)=P(X+1)-P(0)=P(X)-P(0)=Q(X)
\]
This implies by induction that $Q(n)=0$ for every nonnegative integer $n$, so $Q=0$, since it has
infinitely many zeros. Thus, $P(X)=P(0)$, but we have also $\int_0^1P(t)\,dt=0$, so $P(0)=0$, and consequently
$P=0$. This proves that $\Phi$ is injective.\bg
\qquad Clearly, for a nonnegative integer $n$  we have $\deg\Delta(X^{n+1})=n$. Thus 
\[
P\in\reel_{n+1}[X]\impl \Delta P\in \reel_{n}[X]
\]
Therefore,
\[
\forall\,n\in\nat,\quad \Phi(\reel_{n+1}[X])\subset \reel_{n}[X]\times \reel.
\]
But the fact that $\Phi$ is injective implies that
\[
\dim\Phi(\reel_{n+1}[X])=\dim \reel_{n+1}[X]=1+\dim \reel_{n}[X]
=\dim(\reel_{n}[X]\times\reel),
\]
and consequently 
\[
\forall\,n\in\nat,\quad \Phi(\reel_{n+1}[X])= \reel_{n}[X]\times \reel
\]
This,  proves that $\Phi$ is surjective, and the lemma follows.
\end{proof}
\bg
\qquad Let us consider the basis $\mathcal{E}=(e_n)_{n\in\nat}$ of $\reel[X]\times\reel$ defined
by $e_0=(0,1)$ and $e_n=(nX^{n-1},0)$ for $n\in\nat^*$. We can 
define the Bernoulli polynomials, In terms of this basis and of the isomorphism $\Phi$ of Lemma \ref{lm11} as follows:
\bg
\begin{definition}\label{def1} The sequence of \textbf{Bernoulli polynomials} $(B_n)_{n\in\nat}$ is defined by
\[B_n=\Phi^{-1}\left(e_n\right)\qquad\text{for $n\geq0$}.\]
\end{definition}
\bg
\qquad According to Lemma \ref{lm11}, this definition takes a more practical form as follows : 
\bg
\begin{corollary}\label{cor11}
The sequence of Bernoulli polynomials  $(B_n)_{n\in\nat}$ is \textbf{uniquely} defined by the conditions:
\begin{align}
\circled{1}&\quad B_0(X)=1.\notag\\
\circled{2}&\quad \forall\,n\in\nat^*,\quad B_n(X+1)-B_n(X)=nX^{n-1}.\\
\circled{3}&\quad \forall\,n\in\nat^*,\quad \int_0^1B_n(t)\,dt=0.\notag
\end{align}
\end{corollary}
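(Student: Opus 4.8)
The plan is to unwind Definition \ref{def1} using the explicit description of the basis $\mathcal{E}$ together with the fact, established in Lemma \ref{lm11}, that $\Phi$ is a bijection. By definition $B_n=\Phi^{-1}(e_n)$, that is $\Phi(B_n)=e_n$, which by the very formula defining $\Phi$ means
\[
\left(\Delta B_n,\ \int_0^1B_n(t)\,dt\right)=e_n.
\]
I would then split the discussion according to the two types of basis vector, $n=0$ and $n\in\nat^*$.

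For $n\geq1$ we have $e_n=(nX^{n-1},0)$, so comparing the two coordinates immediately yields $\Delta B_n=nX^{n-1}$, i.e. $B_n(X+1)-B_n(X)=nX^{n-1}$, which is \circled{2}, together with $\int_0^1B_n(t)\,dt=0$, which is \circled{3}. For $n=0$ we have $e_0=(0,1)$, hence $\Delta B_0=0$ and $\int_0^1B_0(t)\,dt=1$. From $\Delta B_0=0$ we get $B_0(X+1)=B_0(X)$, and the same induction argument used in the proof of Lemma \ref{lm11} (a polynomial fixed by $X\mapsto X+1$ has infinitely many prescribed values, hence is constant) shows $B_0$ is a constant; the integral condition then forces that constant to equal $1$, which is \circled{1}. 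This shows the Bernoulli polynomials satisfy \circled{1}--\circled{3}.

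For uniqueness, suppose $(P_n)_{n\in\nat}$ is any sequence of polynomials satisfying \circled{1}--\circled{3}. Then $P_0=1=B_0$ by \circled{1}, and for $n\geq1$ conditions \circled{2} and \circled{3} say precisely that $\Phi(P_n)=(nX^{n-1},0)=e_n=\Phi(B_n)$; since $\Phi$ is injective by Lemma \ref{lm11}, we conclude $P_n=B_n$ for every $n$.

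I do not expect any genuine obstacle here: the corollary is essentially a reformulation of the definition once the basis $\mathcal{E}$ is written out coordinatewise. The only point deserving a word of care is the asymmetry of the $n=0$ case, where the statement lists an explicit value $B_0=1$ rather than a pair of constraints $(\Delta B_0,\int_0^1B_0)=(0,1)$; reconciling the two is exactly the short constancy argument borrowed from Lemma \ref{lm11}, and it causes no real difficulty.
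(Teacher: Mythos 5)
Your proposal is correct and is essentially the argument the paper has in mind: the corollary is stated there as an immediate reformulation of Definition \ref{def1} via Lemma \ref{lm11} (the paper supplies no written proof), and your coordinatewise unwinding of $\Phi(B_n)=e_n$, together with the short constancy argument for the $n=0$ case and injectivity of $\Phi$ for uniqueness, fills in exactly the intended details.
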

\qquad For instance, it is straightforward to see that 
\[
B_1(X)=X-\frac12,\qquad\text{and }\qquad B_2(X)=X^2-X+\frac16.\]
\bg

\section{Properties of Bernoulli polynomials }\label{sec2}
\bn
\qquad In the next proposition, we summarize some simple properties of Bernoulli {polynomials} :\bg

\begin{proposition}\label{pr21}
The sequence of  Bernoulli polynomials $(B_n)_{n\in\nat}$ satisfies the following properties:
\begin{enumeratei}
\item \label{pr211} For every positive integer $n$ we have $B_n'(X)=nB_{n-1}(X)$.
\item \label{pr212} For every positive integer $n$ we have $B_n(1-X)=(-1)^nB_n(X)$.
\item \label{pr213} For every nonnegative integer $n$ and every positive integer $p$ we have
\[\frac{1}{p}\sum_{k=0}^{p-1}B_n\left(\frac{X+k}{p}\right)
=\frac{1}{p^n}B_n(X).
\]
\end{enumeratei}
\end{proposition}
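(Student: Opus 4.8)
The plan is to prove all three properties by exploiting the uniqueness characterization in Corollary~\ref{cor11}: for each identity, I would introduce the candidate sequence of polynomials appearing on one side, verify it satisfies conditions \circled{1}--\circled{3} (suitably normalized), and conclude equality. For \itemref{pr211}, consider the sequence $(P_n)_{n\in\nat}$ defined by $P_0 = B_0 = 1$ and $P_n = \frac{1}{n+1}B_{n+1}'$ for $n\ge 1$; differentiating condition \circled{2} for $B_{n+1}$ gives $B_{n+1}'(X+1) - B_{n+1}'(X) = (n+1)nX^{n-1}$, hence $\Delta P_n = nX^{n-1}$, and integrating, $\int_0^1 P_n(t)\,dt = \frac{1}{n+1}\bigl(B_{n+1}(1)-B_{n+1}(0)\bigr) = \frac{1}{n+1}\Delta B_{n+1}(0) = 0$ since $\Delta B_{n+1}(X) = (n+1)X^n$ vanishes at $X=0$ for $n\ge 1$. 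By uniqueness $P_n = B_n$, which is the claim. (One should double-check the degree/normalization so that the sequence genuinely starts correctly.)

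For \itemref{pr212}, set $Q_n(X) = (-1)^n B_n(1-X)$. Then $Q_0 = 1$; the difference operator gives $Q_n(X+1) - Q_n(X) = (-1)^n\bigl(B_n(-X) - B_n(1-X)\bigr) = -(-1)^n\bigl(B_n(1-X)-B_n(-X)\bigr) = -(-1)^n \cdot n(-X)^{n-1} = (-1)^n(-1)^n n X^{n-1} = nX^{n-1}$, using condition \circled{2} with $X$ replaced by $-X$. Finally $\int_0^1 Q_n(t)\,dt = (-1)^n\int_0^1 B_n(1-t)\,dt = (-1)^n\int_0^1 B_n(s)\,ds = 0$. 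Uniqueness then forces $Q_n = B_n$.

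For \itemref{pr213}, fix $p\ge 1$ and define $R_n(X) = \frac{p^{n-1}}{1}\sum_{k=0}^{p-1} B_n\!\left(\frac{X+k}{p}\right)$; equivalently $R_n(X) = p^{n-1}\sum_{k=0}^{p-1} B_n\!\left(\frac{X+k}{p}\right)$, and the claim is $R_n(X) = B_n(X)$. Check $R_0 = p^{-1}\cdot p = 1$. For the difference, $R_n(X+1) - R_n(X) = p^{n-1}\sum_{k=0}^{p-1}\bigl(B_n(\tfrac{X+k+1}{p}) - B_n(\tfrac{X+k}{p})\bigr)$ telescopes to $p^{n-1}\bigl(B_n(\tfrac{X+p}{p}) - B_n(\tfrac{X}{p})\bigr) = p^{n-1}\bigl(B_n(\tfrac{X}{p}+1) - B_n(\tfrac{X}{p})\bigr) = p^{n-1}\cdot n(\tfrac{X}{p})^{n-1} = nX^{n-1}$ by condition \circled{2}. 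For the integral, substitute $t = ps - k$ on each piece to get $\int_0^1 R_n(t)\,dt = p^{n-1}\sum_{k=0}^{p-1}\int_0^1 B_n(\tfrac{t+k}{p})\,dt = p^{n-1}\sum_{k=0}^{p-1} p\int_{k/p}^{(k+1)/p} B_n(s)\,ds = p^n\int_0^1 B_n(s)\,ds = 0$. Uniqueness gives the multiplication theorem.

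The main obstacle is purely bookkeeping: in each case one must be careful that the constructed sequence is indexed so that it agrees with $(B_n)$ at $n=0$ and that the polynomial-degree hypotheses implicit in Corollary~\ref{cor11} are met, so that the uniqueness statement genuinely applies; the algebraic manipulations (differentiation, the substitution $X\mapsto 1-X$, telescoping, the change of variables in the integral) are all routine once the framework is set up. No step requires a genuinely new idea beyond recognizing that Corollary~\ref{cor11} is a uniqueness engine.
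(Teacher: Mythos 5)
Your proposal is correct and follows essentially the same route as the paper: for each identity you build the candidate sequence, verify conditions \circled{1}--\circled{3} of Corollary~\ref{cor11} (differentiating $\Delta B_{n+1}=(n+1)X^n$ for \itemref{pr211}, evaluating $\Delta B_n$ at $-X$ for \itemref{pr212}, telescoping and a change of variables for \itemref{pr213}), and invoke uniqueness. All the sign and normalization checks you flag as ``bookkeeping'' do in fact go through as you computed them.
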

\begin{proof}
\itemref{pr211}~ Consider the sequence of polynomials $(Q_n)_{n\in\nat}$ defined by
$Q_n={\frac{1}{n+1}B'_{n+1}}$. It is straightforward to see that $Q_0(X)=1$ and for $n\geq1$ :
\[
\Delta Q_n=\frac{1}{n+1}(\Delta B_{n+1})^\prime=\frac{1}{n+1}((n+1)X^n)^\prime=nX^{n-1}
\]
and
\[
\int_0^1Q_n(t)\,dt=\frac{1}{n+1}\int_0^1B'_{n+1}(t)\,dt=\frac{\Delta B_{n+1}(0)}{n+1}=0.
\]
This proves that  the sequence of $(Q_n)_{n\in\nat}$ satisfies the conditions \circled{1}, \circled{2}
and \circled{3} of Corollary \ref{cor11}, and \itemref{pr211} follows because of the unicity assertion.
\bn
 
$(ii)$~ Consider again the sequence $(Q_n)_{n\in\nat}$ defined by
$Q_n(X)={(-1)^nB_n(1-X)}$. Clearly $Q_0(X)=1$ and for $n\geq1$ :
\begin{align*}
\Delta Q_n&=(-1)^n(B_n(-X)-B_n(1-X))=(-1)^{n-1}\Delta B_n(-X)\\
&=(-1)^{n-1}n(-X)^{n-1}=nX^{n-1}.
\end{align*}
Moreover, for  $n\geq1$, $\int_0^1Q_n(t)\,dt=(-1)^n\int_0^1B_{n}(1-t)\,dt=0$. This proves that 
 the sequence of $(Q_n)_{n\in\nat}$ satisfies the conditions \circled{1}, \circled{2}
and \circled{3} of Corollary \ref{cor11}, and \itemref{pr212} follows from the unicity assertion.

\bn 
$(iii)$~ Similarly, consider  the sequence of polynomials $(Q_n)_{n\in\nat}$ defined by
\[
Q_n(X)=p^{n-1}\sum_{k=0}^{p-1}B_n\left(\frac{X+k}{p}\right)
\]
 Clearly $Q_0(X)=1$ and for $n\geq1$ :
\begin{align*}
\Delta Q_n&=p^{n-1}\sum_{k=0}^{p-1}B_n\left(\frac{X+k+1}{p}\right)
-p^{n-1}\sum_{k=0}^{p-1}B_n\left(\frac{X+k}{p}\right)\\
&=p^{n-1}\left(\sum_{k=1}^{p}B_n\left(\frac{X+k}{p}\right)
-\sum_{k=0}^{p-1}B_n\left(\frac{X+k}{p}\right)\right)\\
&=p^{n-1}\left(B_n\left(\frac{X+p}{p}\right)-B_n\left(\frac{X}{p}\right)\right)
=p^{n-1}\Delta B_n\left(\frac{1}{p}X\right)=nX^{n-1}.
\end{align*}
Moreover, for  $n\geq1$,
\begin{align*}
\int_0^1Q_n(t)dt&=p^{n-1}\sum_{k=0}^{p-1}\int_0^1B_n\left(\frac{t+k}{p}\right)\,dt\\
&=p^{n-1}\sum_{k=0}^{p-1}\int_{k/p}^{(k+1)/p}B_n(t)\,dt=p^{n-1}\int_0^1B_n(t)\,dt=0.
\end{align*}
 This proves that 
 the sequence of $(Q_n)_{n\in\nat}$ satisfies the conditions \circled{1}, \circled{2}
and \circled{3} of Corollary \ref{cor11}, and \itemref{pr213} follows by unicity. The proof of Proposition
\ref{pr21} is complete.
\end{proof}
\bg

\begin{definition} \label{def2} The sequence of \textbf{Bernoulli Numbers} $(b_n)_{n\in\nat}$ is defined by
\[b_n=B_n(0)\quad\text{for $n\geq0$}.\]
\end{definition}
\bg
\qquad The following proposition summarizes some properties of Bernoulli numbers.\bg

\begin{proposition}\label{pr22}
The sequence of  Bernoulli numbers $(b_n)_{n\in\nat}$ satisfies the following properties:
\begin{enumeratei}
\item \label{pr221} For every positive integer $n$ we have $b_{2n+1}=0$ and $b_{2n}=B_{2n}(1)$.
\item \label{pr222} For every nonnegative integer $n$ we have $B_n\left(\frac12\right)=(2^{1-n}-1)b_n$.
\item \label{pr223} For every nonnegative integer $n$ we have 
\[B_n(X)=\sum_{k=0}^n\binom{n}{k}b_{n-k}X^k.
\]
\item \label{pr224} For every positive integer $n$ we have 
\[b_n=-\frac{1}{n+1}\sum_{k=0}^{n-1}\binom{n+1}{k}b_{k}.\]
\end{enumeratei}
\end{proposition}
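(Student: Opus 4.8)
The plan is to obtain all four identities directly from the characterization \circled{1}--\circled{3} of Corollary~\ref{cor11} together with Proposition~\ref{pr21}; no fresh tools are needed, and the parts are essentially independent.

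For \itemref{pr221} I would evaluate the symmetry relation \itemref{pr212} at $X=0$, which gives $B_n(1)=(-1)^nB_n(0)=(-1)^nb_n$. On the other hand condition \circled{2} gives $B_n(1)-B_n(0)=n\,0^{n-1}$, and the right-hand side vanishes as soon as $n\ge2$, so $B_n(1)=b_n$ for $n\ge2$. Comparing the two expressions yields $b_n=(-1)^nb_n$ for $n\ge2$; hence $b_{2n+1}=0$ and $b_{2n}=B_{2n}(1)$ for every positive integer $n$ (note the indices $2n$ and $2n+1$ are always $\ge2$, so the exceptional value $0^0=1$ occurring at $n=1$ never interferes).

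For \itemref{pr222} I would specialize the multiplication formula \itemref{pr213} to $p=2$ and $X=0$: it reads $\frac12\bigl(B_n(0)+B_n(\tfrac12)\bigr)=2^{-n}B_n(0)$, and solving for $B_n(\tfrac12)$ gives exactly $B_n(\tfrac12)=(2^{1-n}-1)b_n$. For \itemref{pr223} the point is that the claimed formula is nothing but the Taylor expansion of $B_n$ about $0$: iterating the differentiation rule \itemref{pr211} gives $B_n^{(k)}=\frac{n!}{(n-k)!}B_{n-k}$ for $0\le k\le n$, hence $B_n^{(k)}(0)=\frac{n!}{(n-k)!}b_{n-k}$, and since $\deg B_n=n$ Taylor's formula yields $B_n(X)=\sum_{k=0}^n\frac{B_n^{(k)}(0)}{k!}X^k=\sum_{k=0}^n\binom{n}{k}b_{n-k}X^k$. (One could instead argue by induction on $n$, with \itemref{pr211} determining the non-constant part and \circled{3} fixing the constant term $b_n$.)

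Finally, for \itemref{pr224} I would apply \circled{2} to $B_{n+1}$ with $n\ge1$: then $B_{n+1}(1)-B_{n+1}(0)=(n+1)\,0^n=0$, so $B_{n+1}(1)=b_{n+1}$. Expanding $B_{n+1}(1)$ by part \itemref{pr223} and re-indexing with $\binom{n+1}{k}=\binom{n+1}{n+1-k}$ gives $B_{n+1}(1)=\sum_{k=0}^{n+1}\binom{n+1}{k}b_k$. Equating the two expressions for $B_{n+1}(1)$ and cancelling the common term $b_{n+1}$ leaves $\sum_{k=0}^{n}\binom{n+1}{k}b_k=0$; isolating the $k=n$ term (whose coefficient is $\binom{n+1}{n}=n+1$) gives the asserted recursion $b_n=-\frac1{n+1}\sum_{k=0}^{n-1}\binom{n+1}{k}b_k$. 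I do not expect a genuine obstacle anywhere; the only places that need a moment's attention are the binomial re-indexing in the step from \itemref{pr223} to \itemref{pr224}, and keeping track of exactly when the term $n\,0^{n-1}$ appearing in \circled{2} may be set to zero.
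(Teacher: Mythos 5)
Your proposal is correct and follows essentially the same route as the paper: part \itemref{pr222} via \itemref{pr213} with $p=2$ at $X=0$, part \itemref{pr223} via Taylor's formula and the iterated derivative identity from \itemref{pr211}, and part \itemref{pr224} by evaluating \itemref{pr223} at $1$ and cancelling $b_{n+1}$. The only (immaterial) deviation is in \itemref{pr221}, where you get $B_n(1)=B_n(0)$ for $n\ge2$ by substituting $X=0$ into the defining relation $\Delta B_n=nX^{n-1}$, whereas the paper writes $B_n(1)-B_n(0)=n\int_0^1B_{n-1}=0$; both are valid one-line arguments.
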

\begin{proof}
$(i)$~Using Proposition \ref{pr21}\itemref{pr211} we have, for $n\geq2$ :
\[
B_n(1)-B_n(0)=\int_0^1B_n'(t)\,dt=n\int_0^1B_{n-1}(t)dt=0
\]
and according to Proposition \ref{pr21}\itemref{pr212} we have $B_n(1)=(-1)^nB_n(0)$ for every $n\geq1$. This
proves that $b_n=0$ if $n$ is an odd integer greater than $2$, and that $B_{2n}(1)=b_{2n}$ for $n\geq1$. This is
\itemref{pr221}.\bg
$(ii)$~ According to Proposition \ref{pr21}\itemref{pr213} with $p=2$ we see that, for every nonnegative
integer $n$ we have
\[
B_n\left(\frac{X}{2}\right)+B_n\left(\frac{X+1}{2}\right)=2^{1-n}B_n(X)
\]
Substituting $X=0$ we obtain \itemref{pr222}.\bg
$(iii)$ ~Consider $n\in\nat$, according Now, we use again Proposition \ref{pr21}\itemref{pr211} to conclude that
\[
B_n^{(k)}=n(n-1)\ldots(n-k+1)B_{n-k},\quad \text{for $0<k\leq n$.}
\]
It follows that
\[
\frac{B_n^{(k)}(Y)}{k!}=\binom{n}{k}B_{n-k}(Y),\quad \text{for $0<k\leq n$}
\]
and using Taylor's formula for polynomials we conclude that
\[
B_n(X+Y)=\sum_{k=0}^n\binom{n}{k}B_{n-k}(Y)X^k
\]
Finally, substituting $Y=0$, we obtain \itemref{pr223}.\bg
$(iv)$ ~Let $n$ be an integer such that $n\geq2$. We have shown that $B_n(1)=b_n$, and using the preceding point
we conclude that $B_n(1)=\sum_{k=0}^n\binom{n}{k}b_{n-k}$, that is $b_n=\sum_{k=0}^n\binom{n}{k}b_{n-k}$
or ${\sum_{k=0}^{n-1}\binom{n}{k}b_{k}=0}$. So
\[
\forall\, n\geq1,\quad \sum_{k=0}^n\binom{n+1}{k}b_k=0
\]
which is equivalent to \itemref{pr224}.
\end{proof}
\bg
\begin{application}\label{rm3}
In Proposition \ref{pr22}~\itemref{pr223}, Bernoulli polynomials are expressed in terms of the canonical basis
$(X^k)_{k\in\nat}$ of $\reel[X]$. In fact, we have proved that
\begin{equation}\label{E:taylor}
B_n(X+Y)=\sum_{k=0}^n\binom{n}{k}B_{n-k}(X)Y^k
\end{equation}
and this can be used to, conversely, express the 
canonical basis $(X^k)_{k\in\nat}$ of $\reel[X]$ in terms of Bernoulli polynomials.\bg
\qquad Indeed, substituting $Y=1$ in \eqref{E:taylor} we obtain
\[
B_{n+1}(X+1)=\sum_{k=0}^{n+1}\binom{n+1}{k}B_{n+1-k}(X),
\]
but according to Corollary \ref{cor11} we have also $B_{n+1}(X+1)=B_{n+1}(X)+(n+1)X^n$. Thus
\[
(n+1)X^n=\sum_{k=1}^{n+1}\binom{n+1}{k}B_{n+1-k}(X)=\sum_{k=0}^{n}\binom{n+1}{k}B_{k}(X)
\]
Finally,
\begin{equation*}\label{E:Xn}
X^n=\frac{1}{n+1}\sum_{k=0}^{n}\binom{n+1}{k}B_{k}(X)
\end{equation*}
\end{application}
\bg
\begin{remark}\label{rm4}
Using the recurrence relation of Proposition \ref{pr22}~\itemref{pr224} we can determine the sequence
of Bernoulli Numbers. In particular, $b_1=-\frac{1}{2}$.  We find  in Table \ref{bn}  the the values of the first six
Bernoulli numbers with even indices. Also we find in Table \ref{BnPol} the list of the first six Bernoulli Polynomials.\bg
\end{remark}
\bg
\begin{table}[h!]
\begin{center}
\begin{tabular}{|c|c|c|c|c|c|c|}
\hline
 $\phantom{\Big|}n\phantom{\Big|}$ &\phantom{-}0~~&\phantom{-}1~&\phantom{-}2~&\phantom{-}3~&\phantom{-}4~&\phantom{-}5~\\ \hline
 $\phantom{\Big|}b_{2n}\phantom{\Big|}$ & \phantom{-}1~~&$\phantom{-}\frac{1}{6}$&$-\frac{1}{30}$&$\phantom{-}\frac{1}{42}$&$-\frac{1}{30}$&$\phantom{-}\frac{5}{66}$\\ \hline
\multicolumn{7}{c}{$\vphantom{\scriptstyle{x}}$}\\
\end{tabular}
\end{center}
\setlength{\abovecaptionskip}{10pt}
\caption{The first Bernoulli Numbers with even indces.}\label{bn}
\end{table}

\bg

\begin{table}[h!]
$\begin{matrix}
\phantom{\Big|}B_0(X)&=1\hfill&\\
\phantom{\Big|}B_1(X)&=X-\frac12\hfill&\\
\phantom{\Big|}B_2(X)&=X^2-X+\frac16\hfill&\\
\phantom{\Big|}B_3(X)&=X^3-\frac32X^2+\frac12X\hfill&=X\left(X-\frac12\right)(X-1)\hfill\\
\phantom{\Big|}B_4(X)&=X^4-2X^3+ X^2-\frac{1}{30}\hfill&\\
\phantom{\Big|}B_5(X)&=X^5-\frac52X^4+\frac53X^3-\frac16X\hfill&=B_3(X)\left(X^2-X-\frac13\right)\hfill\\
\phantom{\Big|}B_6(X)&=X^6-3X^5+\frac52X^4-\frac12X^2+\frac{1}{42}\hfill&\\
\phantom{\Big|}
\end{matrix}$
\caption{The first Bernoulli Polynomials.}\label{BnPol}
\end{table}
\bg
\begin{application}\label{Snapp}
For a nonnegative integer $n$ and a positive integer $m$, we define $S_n(m)$ to be the sum
\[
S_n(m)=1^n+2^n+\cdots+m^n=\sum_{k=1}^m k^n.
\]
Noting that 
\[k^n=\frac{1}{n+1}\left(B_{n+1}(k+1)-B_{n+1}(k)\right)\]
we see that
\[
S_n(m)=\frac{1}{n+1}\left(B_{n+1}(m+1)-B_{n+1}(0)\right).
\]
In Table~\ref{Sn} we have listed the first values of these sums using the results from Table~\ref{BnPol}.
\bg
\begin{table}[ht]
\begin{alignat*}{8}
1\phantom{^2} &{}+{}&2\phantom{^2}&{}+{}&\cdots&{}+{}&m\phantom{^2}&=\frac{m(m+1)}{2}\\
1^2                  &{}+{}&2^2                  &{}+{}&\cdots&{}+{}&m^2                  &=\frac{m(m+1)(2m+1)}{6}\\
1^3                  &{}+{}&2^3                  &{}+{}&\cdots&{}+{}&m^3                  &=\frac{m^2(m+1)^2}{4}\\
1^4                  &{}+{}&2^4                  &{}+{}&\cdots&{}+{}&m^4                  &=\frac{m(m+1)(2m+1)(3m^2+m-1)}{30}
\end{alignat*}
\caption{The sum of consecutive powers.}\label{Sn}
\end{table}
\end{application}

It was on studying these sums that \textit{Jacob Bernoulli} introduced the numbers named after him.

\begin{figure}[h!]
\begin{center}
\includegraphics[width=0.4\textwidth]{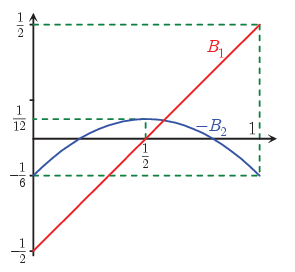}
\caption{The graphs of $B_1$ and $-B_2$ on $[0,1]$.}\label{F1}
\end{center}
\end{figure}

\qquad It is clear that $x\mapsto B_1(x)=x-\frac12$ is negative on $\left(0,\frac12\right)$ and 
positive on $\left(\frac12,1\right)$. It follows that $x\mapsto -B_2(x)=-x^2+x-\frac16$ is increasing
on the interval $\left[0,\frac12\right]$, and decreasing on the interval $\left[\frac12,1\right]$, and since
it has opposit signs at $0$ and $\frac12$ we conclude that $B_2$ vanishes exactly once
on the interval $\left(0,\frac12\right)$ and exactly once on the interval $\left(\frac12,1\right)$.
\bg
\qquad These results can be generalized as follows: 
\bg
\begin{proposition}\label{pr23}
For every positive integer $n$ we have
\begin{description}
\item[$\mathcal{P}_n$] The function $x\mapsto(-1)^nB_{2n}(x)$ is increasing on $\left[0,\frac12\right]$,
and decreasing on $\left[\frac12,1\right]$, and consequently, it vanishes exactly once on each of the intervals
$\left(0,\frac12\right)$ and $\left(\frac12,1\right)$.
\item[$\mathcal{Q}_n$] The function $x\mapsto(-1)^nB_{2n+1}(x)$ is negative on $\left(0,\frac12\right)$,
and positive on $\left(\frac12,1\right)$. Moreover, $0$, $\frac12$ and $1$ are simple zeros of $B_{2n+1}$ in the
interval $[0,1]$.
\end{description}
\end{proposition}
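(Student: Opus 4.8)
The plan is to prove $\mathcal{P}_n$ and $\mathcal{Q}_n$ simultaneously by induction on $n$, following the chain $\mathcal{P}_n\Rightarrow\mathcal{Q}_n\Rightarrow\mathcal{P}_{n+1}$; the base case $\mathcal{P}_1$ has already been settled in the discussion preceding the statement. The engine of the argument is the relation $B_m'=mB_{m-1}$ of Proposition \ref{pr21}\itemref{pr211}, which turns sign information about $B_{m-1}$ into monotonicity information about $B_m$. The supporting facts are the zero-mean normalisation $\int_0^1B_m(t)\,dt=0$ from Corollary \ref{cor11}, the symmetry $B_m(1-X)=(-1)^mB_m(X)$ from Proposition \ref{pr21}\itemref{pr212}, and the vanishing $b_{2n+1}=0$ and $B_{2n+1}(\frac12)=0$ from Proposition \ref{pr22}\itemref{pr221} and \itemref{pr222}.

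\emph{Step $\mathcal{P}_n\Rightarrow\mathcal{Q}_n$.} First I would upgrade $\mathcal{P}_n$ to sign information. Write $f=(-1)^nB_{2n}$. By $\mathcal{P}_n$, $f$ increases on $[0,\frac12]$ and decreases on $[\frac12,1]$, so its maximum on $[0,1]$ is $f(\frac12)$ and its minimum is $f(0)=f(1)=(-1)^nb_{2n}$; since $\int_0^1f=0$ and $f$ is a non-constant polynomial, the minimum is negative and the maximum is positive, i.e. $(-1)^nb_{2n}<0<(-1)^nB_{2n}(\frac12)$. Together with the uniqueness of the zeros $\alpha\in(0,\frac12)$ and, by symmetry, $1-\alpha\in(\frac12,1)$ of $f$, this gives the sign pattern $f<0$ on $(0,\alpha)\cup(1-\alpha,1)$ and $f>0$ on $(\alpha,1-\alpha)$. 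Now set $g=(-1)^nB_{2n+1}$, so $g'=(2n+1)f$; hence $g$ is strictly decreasing on $[0,\alpha]$, strictly increasing on $[\alpha,1-\alpha]$, and strictly decreasing on $[1-\alpha,1]$. Since $B_{2n+1}$, hence $g$, vanishes at $0$, $\frac12$, $1$ (by Proposition \ref{pr22}\itemref{pr221} and \itemref{pr222} together with the symmetry of Proposition \ref{pr21}\itemref{pr212}), and $\alpha<\frac12<1-\alpha$, tracking the three arcs --- $g$ drops from $g(0)=0$ to $g(\alpha)<0$, rises through $g(\frac12)=0$, then drops back to $g(1)=0$ --- forces $g<0$ on $(0,\frac12)$ and $g>0$ on $(\frac12,1)$. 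Finally $g'(0)=(2n+1)(-1)^nb_{2n}\neq0$, $g'(1)=(2n+1)(-1)^nB_{2n}(1)=(2n+1)(-1)^nb_{2n}\neq0$, and $g'(\frac12)=(2n+1)(-1)^nB_{2n}(\frac12)\neq0$, so $0$, $\frac12$, $1$ are simple zeros of $B_{2n+1}$, which completes $\mathcal{Q}_n$.

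\emph{Step $\mathcal{Q}_n\Rightarrow\mathcal{P}_{n+1}$.} Put $h=(-1)^{n+1}B_{2n+2}$, so that $h'=-(2n+2)(-1)^nB_{2n+1}$; by $\mathcal{Q}_n$ this is positive on $(0,\frac12)$ and negative on $(\frac12,1)$, which is exactly the monotonicity asserted by $\mathcal{P}_{n+1}$. The symmetry $B_{2n+2}(1-X)=B_{2n+2}(X)$ makes $h$ symmetric about $\frac12$, so the extreme values of $h$ on $[0,1]$ are the common endpoint value $h(0)=h(1)=(-1)^{n+1}b_{2n+2}$ (minimum) and $h(\frac12)$ (maximum); since $\int_0^1h=0$ and $h$ is non-constant, the minimum is negative and the maximum is positive. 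Applying the intermediate value theorem to the strictly increasing function $h$ on $[0,\frac12]$ gives exactly one zero in $(0,\frac12)$, and symmetry gives exactly one in $(\frac12,1)$; this is $\mathcal{P}_{n+1}$, and the induction is complete.

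I expect the main obstacle to be not any single computation but the recurring first move in each step: extracting honest \emph{sign} information about $(-1)^nB_{2n}$ (resp. about $h$) from the mere \emph{shape} provided by the induction hypothesis. This is where the zero-mean normalisation $\int_0^1B_m=0$ is indispensable --- monotonicity alone does not locate the graph relative to the axis. Once the sign pattern of $(-1)^nB_{2n}$ on $(0,1)$ is pinned down, the rest is careful but routine bookkeeping, the one point to watch being that $B_{2n+1}$ vanishes at $\frac12$ and $B_{2n+2}$ is extremal there, so that the monotone arcs join at the correct heights.
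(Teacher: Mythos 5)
Your proof is correct and follows the paper's argument essentially verbatim: the same induction $\mathcal{P}_1$, then $\mathcal{P}_n\Rightarrow\mathcal{Q}_n$ and $\mathcal{Q}_n\Rightarrow\mathcal{P}_{n+1}$, driven by $B_m'=mB_{m-1}$ and a table of variations in each step. The only (harmless) divergence is how the signs of the extreme values are pinned down: you use $\int_0^1B_m=0$ together with non-constancy, whereas the paper invokes the explicit value $B_{2n+2}\left(\tfrac12\right)=(2^{-1-2n}-1)b_{2n+2}$ to see that the endpoint and midpoint values have opposite signs.
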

\begin{proof}
We have already proved $\mathcal{P}_1$.\bg
$\mathcal{P}_n\Rightarrow\mathcal{Q}_n$. Let $f(x)=(-1)^nB_{2n+1}(x)$, then we have $f'(x)=(2n+1)(-1)^nB_{2n}(x)$, and
according to $\mathcal{P}_n$, there exists an $\alpha$ in $\left(0,\frac12\right)$ and a $\beta$ in
$\left(\frac12,1\right)$ such that $f'$ is negative on each of the intervals $(0,\alpha)$ and $(\beta,1)$, and  positive
on the interval $(\alpha,\beta)$. Therefore, $f$ has the following table of variations:
\[
\begin{array}{|c|ccccccccc|}
\hline
 x   \phantom{\Big|}&0 & &\alpha& &\frac12& &\beta& &1\\ \hline
f'(x)\phantom{\Big|}&  &-&0& &+& &0&-& \\ \hline
f(x) \phantom{\Big|}&0&\searrow&\smile&\nearrow&0&\nearrow&\frown&\searrow&0\\ \hline
\end{array}
\]
where we used Proposition \ref{pr21}~\itemref{pr212} and Proposition \ref{pr22}~\itemref{pr221} and \itemref{pr222} to conclude 
that \[
B_{2n+1}(0)=B_{2n+1}\left(\frac12\right)=B_{2n+1}(1)=0.
\]
 Moreover, $\mathcal{P}_n$ implies that
$f'$ does not vanish at any of the points $0$, $\frac12$ and $1$. This proves that $0$, $\frac12$ and $1$ are the only
 zeros of $f$ in the interval $[0,1]$ and that they are simple. $\mathcal{Q}_n$ follows immediately.
\bg
$\mathcal{Q}_n\Rightarrow\mathcal{P}_{n+1}$. Let $f(x)=(-1)^{n+1}B_{2n+2}(x)$, then we have $f'(x)=-(2n+2)(-1)^nB_{2n+1}(x)$,
 and
according to $\mathcal{Q}_n$, the derivative $f'$ is positive on $\left(0,\frac12\right)$ and negative
on 
$\left(\frac12,1\right)$. Therefore, $f$ has the following table of variations: 
\[ 
\begin{array}{|c|ccccc|}
\hline
x   \phantom{\Big|}&0& &\frac12& & 1\\ \hline
f'(x)\phantom{\Big|}&0&+&0&-& 0\\ \hline
f(x)\phantom{\Big|}&A&\nearrow&-(1-2^{-1-2n})A&\searrow& A\\ \hline
\end{array}
\]
with $A=(-1)^{n+1}b_{2n+2}$. Clearly $A\ne0$ because $f$ is increasing on $\left(0,\frac12\right)$. Consequently,
$f(0)f\left(\frac12\right)<0$ and $f(1)f\left(\frac12\right)<0$. Thus, $f$ vanishes exactly once on 
each of intervals $\left(0,\frac12\right)$ and $\left(\frac12,1\right)$, and that, it is increasing on
$\left[0,\frac12\right]$ and decreasing $\left[\frac12,1\right]$. This proves $\mathcal{P}_{n+1}$, and achieves 
the proof of the proposition.
\end{proof}
\bn
\begin{figure}[ht]
\begin{center}
\includegraphics[width=0.35\textwidth]{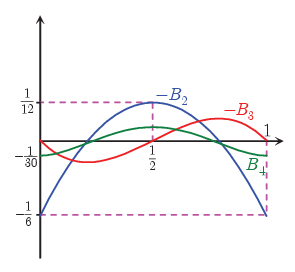}
\caption{Illustration of Proposition \ref{pr23}}\label{F2}
\end{center}
\end{figure} 
\bn
\begin{remark}\label{rm1}
It follows from the preceding proof that $(-1)^{n+1}b_{2n}>0$ for every positive integer $n$.
\end{remark}

\bg
\begin{corollary}\label{cor24}
For every positive integer $n$ we have
\[
\sup_{x\in[0,1]}\abs{B_{2n}(x)}=\abs{b_{2n}}\qquad\text{and}\qquad
\sup_{x\in[0,1]}\abs{B_{2n+1}(x)}\leq\frac{2n+1}{4}\abs{b_{2n}}
\]
\end{corollary}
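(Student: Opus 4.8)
The plan is to derive both statements directly from the variation tables of Proposition~\ref{pr23}, combined with the special-value and symmetry formulas for $B_n$ and $b_n$ already proved.

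For the first equality I would fix $n\geq1$ and set $g(x)=(-1)^nB_{2n}(x)$. Property $\mathcal{P}_n$ says $g$ increases on $\left[0,\frac12\right]$ and decreases on $\left[\frac12,1\right]$, so on $[0,1]$ it attains its minimum at the two endpoints and its maximum at $x=\frac12$. I then compute these three values: $g(0)=(-1)^nb_{2n}$; $g(1)=(-1)^nB_{2n}(1)=(-1)^nb_{2n}$ by Proposition~\ref{pr22}\itemref{pr221}; and $g\left(\frac12\right)=(-1)^n(2^{1-2n}-1)b_{2n}$ by Proposition~\ref{pr22}\itemref{pr222}. Using Remark~\ref{rm1}, $(-1)^nb_{2n}=-\abs{b_{2n}}$, so $g(0)=g(1)=-\abs{b_{2n}}$ while $g\left(\frac12\right)=(1-2^{1-2n})\abs{b_{2n}}$ lies in $[0,\abs{b_{2n}})$ because $n\geq1$ forces $0<2^{1-2n}\leq\frac12$. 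Hence $g([0,1])\subseteq[-\abs{b_{2n}},\abs{b_{2n}})$ with the value $-\abs{b_{2n}}$ actually attained, which gives $\sup_{x\in[0,1]}\abs{B_{2n}(x)}=\sup_{x\in[0,1]}\abs{g(x)}=\abs{b_{2n}}$.

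For the bound on $B_{2n+1}$ I would use $B_{2n+1}'=(2n+1)B_{2n}$ from Proposition~\ref{pr21}\itemref{pr211} together with the zeros $B_{2n+1}(0)=B_{2n+1}\left(\frac12\right)=0$ (established inside the proof of Proposition~\ref{pr23}). For $x\in\left[0,\frac12\right]$ these give the two representations
\[
B_{2n+1}(x)=(2n+1)\int_0^xB_{2n}(t)\,dt=-(2n+1)\int_x^{1/2}B_{2n}(t)\,dt,
\]
the second because $\int_0^{1/2}B_{2n}(t)\,dt=\frac{1}{2n+1}\bigl(B_{2n+1}\left(\tfrac12\right)-B_{2n+1}(0)\bigr)=0$. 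Bounding the integrand by the sup norm $\abs{b_{2n}}$ just obtained, using whichever representation has the shorter interval of integration, yields $\abs{B_{2n+1}(x)}\leq(2n+1)\abs{b_{2n}}\min\!\left(x,\frac12-x\right)\leq\frac{2n+1}{4}\abs{b_{2n}}$. The case $x\in\left[\frac12,1\right]$ follows from $B_{2n+1}(x)=-B_{2n+1}(1-x)$ (Proposition~\ref{pr21}\itemref{pr212}), which reduces it to $1-x\in\left[0,\frac12\right]$.

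The one point requiring care is squeezing out the constant $\frac14$ rather than the obvious $\frac12$ one would get by integrating from $0$ alone; this is exactly why one must exploit \emph{both} zeros of $B_{2n+1}$ in $\left[0,\frac12\right]$ and integrate over the shorter subinterval. Everything else is routine bookkeeping with the monotonicity statement and the symmetry $B_n(1-X)=(-1)^nB_n(X)$.
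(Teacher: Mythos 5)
Your proposal is correct and follows essentially the same route as the paper: the first equality comes from the monotonicity in Proposition~\ref{pr23} together with the values $B_{2n}(0)=b_{2n}$ and $B_{2n}\left(\frac12\right)=(2^{1-2n}-1)b_{2n}$, and the second bound is obtained by integrating $B_{2n+1}'=(2n+1)B_{2n}$ from whichever of the two zeros $0$ or $\frac12$ is closer (the paper phrases this as the two explicit cases $x\in\left[0,\frac14\right]$ and $x\in\left[\frac14,\frac12\right]$ rather than your $\min\left(x,\frac12-x\right)$), then extends to $\left[\frac12,1\right]$ by the symmetry $B_{2n+1}(1-x)=-B_{2n+1}(x)$.
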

\begin{proof}
In fact, we conclude from Proposition \ref{pr23} that

\begin{align*}
\sup_{x\in[0,1]}\abs{B_{2n}(x)}&=\max\left(\abs{B_{2n}(0)},\abs{B_{2n}\left(\frac12\right)}\right)\\
&=\max\left(\abs{b_{2n}},\left(1-\frac{1}{2^{2n-1}}\right)\abs{b_{2n}}\right)=\abs{b_{2n}}
\end{align*}
\bg
\qquad In order to show the second inequality we consider several cases:

\begin{itemize}
\item If $x\in\left[0,\frac14\right]$, then we have
\[
B_{2n+1}(x)=B_{2n+1}(x)-B_{2n+1}(0)=\int_0^xB_{2n}(t)\,dt
\]
So
\begin{align*}
\abs{B_{2n+1}(x)}&\leq(2n+1) \int_0^x\abs{B_{2n}(t)}\,dt\\
&\leq  (2n+1)\abs{b_{2n}}x\leq\frac{2n+1}{4}\abs{b_{2n}}
\end{align*}
\item If $x\in\left[\frac14,\frac12\right]$, then we have
\[
B_{2n+1}(x)=B_{2n+1}(x)-B_{2n+1}\left(\frac12\right)=\int_{1/2}^x(2n+1)B_{2n}(t)\,dt
\]
Thus
\begin{align*}
\abs{B_{2n+1}(x)}&\leq(2n+1) \int_x^{1/2}\abs{B_{2n}(t)}\,dt\\
&\leq  (2n+1)\abs{b_{2n}}\left(\frac12-x\right)\leq\frac{2n+1}{4}\abs{b_{2n}}
\end{align*}
\item Finally, when $x\in\left[\frac12,1\right]$, we recall that $B_{2n+1}(x)=-B_{2n+1}(1-x)$ according to Proposition
\ref{pr21} \itemref{pr212}. Thus, in this case we have also
\[
\abs{B_{2n+1}(x)}\leq \sup_{0\leq t\leq\frac12}\abs{B_{2n+1}(t)}\leq\frac{2n+1}{4}\abs{b_{2n}}
\]
\end{itemize}
and the second part of the proposition follows.
\end{proof}
\bg
\begin{remark}\label{rm2}
We will show later in these notes that 
\[ \sup_{x\in[0,1]}\abs{B_{2n+1}(x)}\leq\frac{2n+1}{2\pi}\abs{b_{2n}}\]
which is, asymptotically, the best possible result. That is, we have also
\[
\lim_{n\to\infty}\frac{2\pi}{(2n+1)\abs{b_{2n}}}\cdot\sup_{x\in[0,1]}\abs{B_{2n+1}(x)}=1
\]
\end{remark}
\bg 
\section{Fourier series and Bernoulli polynomials }\label{sec3}
\bn
\qquad Extending periodically the restriction $\left.B_{n}\right|_{[0,1)}$ of the Bernoulli
polynomial $B_n$ to the interval $[0,1)$, we obtain a $1$-periodic piecewise continuous function denoted
 by $\widetilde{B}_n$. In fact, for every real $x$ we have
$\widetilde{B}_n(x)=B_n(\{x\})$ where $\{t\}=t-\floor{t}$ is the fractional part of the real $t$. In Figure \ref{F3}
the graphs of the functions $\widetilde{B}_1$, $\widetilde{B}_2$ and $\widetilde{B}_3$ are depicted.

\begin{figure}[h]
\begin{center}
\includegraphics[width=0.5\textwidth]{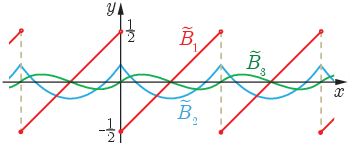}
\caption{The graphs of $\widetilde{B}_1$, $\widetilde{B}_2$ and $\widetilde{B}_3$ }\label{F3}
\end{center}
\end{figure} 

\bg
 In this section we consider the Fourier series expansion of these periodic functions.
\bg
\begin{proposition}\label{pr31}
\begin{enumeratei}
\item \label{pr311} For every $x\in(0,1)$, we have
\[
B_1(x)=-\frac{1}{\pi}\sum_{k=1}^\infty\frac{\sin(2\pi k x)}{k}
\]
\item \label{pr312} For every positive integer $n$, and every $x\in[0,1]$, we have
\begin{align*}
B_{2n}(x)&=(-1)^{n+1}\frac{2(2n)!}{(2\pi)^{2n}}\,\sum_{k=1}^\infty\frac{\cos(2\pi k x)}{k^{2n}}\\
B_{2n+1}(x)&=(-1)^{n+1}\frac{2(2n+1)!}{(2\pi)^{2n+1}}\,\sum_{k=1}^\infty\frac{\sin(2\pi k x)}{k^{2n+1}}
\end{align*}
\end{enumeratei}
\end{proposition}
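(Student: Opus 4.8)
The plan is to compute the complex Fourier coefficients of the periodized Bernoulli functions $\widetilde{B}_n$ in closed form — using only the recurrence $B_n'=nB_{n-1}$ of Proposition \ref{pr21}~\itemref{pr211} and the normalization conditions of Corollary \ref{cor11} — and then to rewrite the resulting complex Fourier series in the real (sine/cosine) form of the statement.

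For $k\in\ent\setminus\{0\}$ and $P\in\reel[X]$ set $c_k(P)\egdef\int_0^1P(t)e^{-2\pi ikt}\,dt$. Integrating by parts and invoking $B_n'=nB_{n-1}$ gives, for $n\geq1$,
\[
c_k(B_n)=\frac{B_n(0)-B_n(1)}{2\pi ik}+\frac{n}{2\pi ik}\,c_k(B_{n-1}).
\]
By condition \circled{2} of Corollary \ref{cor11}, $B_n(1)-B_n(0)=n\cdot0^{n-1}$, which equals $1$ for $n=1$ and $0$ for $n\geq2$; since also $c_k(B_0)=\int_0^1e^{-2\pi ikt}\,dt=0$, a one-line induction yields
\[
c_k(B_n)=-\frac{n!}{(2\pi ik)^n}\qquad(n\geq1,\ k\in\ent\setminus\{0\}),
\]
while condition \circled{3} gives $c_0(B_n)=\int_0^1B_n(t)\,dt=0$ for $n\geq1$.

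Next I would deal with convergence. For an index $n\geq2$ one has $\abs{c_k(B_n)}=n!/(2\pi\abs{k})^n$, so $\sum_{k\in\ent}\abs{c_k(B_n)}<\infty$ and the Fourier series of $\widetilde{B}_n$ converges absolutely and uniformly; since $B_n(1)=B_n(0)$ the function $\widetilde{B}_n$ is continuous, hence it equals the sum of its Fourier series everywhere, in particular $B_n(x)=\sum_{k\in\ent}c_k(B_n)e^{2\pi ikx}$ for $x\in[0,1]$. The case of $B_1$ is the one genuinely delicate point: $\widetilde{B}_1$ is the sawtooth, continuous on $(0,1)$ but jumping at the integers, so absolute convergence is unavailable, and I would instead invoke the classical Dirichlet--Jordan theorem on pointwise convergence of the Fourier series of a function of bounded variation, which gives convergence to $\widetilde{B}_1(x)=B_1(x)$ at every point of continuity, i.e. for all $x\in(0,1)$. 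This appeal to a standard convergence theorem is essentially the only analytic ingredient; everything else is bookkeeping.

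It remains to pass from exponentials to trigonometric functions. Because $B_n$ has real coefficients, $c_{-k}(B_n)=\overline{c_k(B_n)}$, so $\sum_{k\in\ent}c_k(B_n)e^{2\pi ikx}=2\,\mathrm{Re}\sum_{k=1}^\infty c_k(B_n)e^{2\pi ikx}$. Writing $(2\pi ik)^n=(2\pi k)^n i^n$ and using $i^{-2m}=(-1)^m$ and $i^{-(2m+1)}=-(-1)^m i$, the coefficients are real for even index (producing a cosine series) and purely imaginary for odd index (producing a sine series); collecting the numerical constants reproduces exactly the two identities of part \itemref{pr312}, and, for $n=1$ in the odd formula, part \itemref{pr311}. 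As an alternative route for index $n\geq2$ once \itemref{pr311} is established, one can integrate the Fourier series of $B_1$ term by term — term-by-term integration of a Fourier series is always legitimate — and climb from $B_n$ to $B_{n+1}$ using $B_{n+1}'=(n+1)B_n$ and $\int_0^1B_{n+1}=0$ to fix the constant of integration.
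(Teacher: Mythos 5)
Your proposal is correct and follows essentially the same route as the paper: integration by parts with $B_n'=nB_{n-1}$ to get the recurrence for the Fourier coefficients, Dirichlet's pointwise convergence theorem for the discontinuous case $n=1$, uniform convergence of the absolutely summable series for $n\geq2$, and then separation into cosine and sine series according to parity. The only (cosmetic) difference is that you run a single induction from $c_k(B_0)=0$ with the jump term $B_n(0)-B_n(1)$ vanishing for $n\geq2$, whereas the paper computes the $n=1$ coefficients by hand first and then inducts from there.
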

\begin{proof}
 First, let us consider the case of $\widetilde{B}_1$. It is clear that
\[
C_0(\widetilde{B}_1)=\int_0^1B_1(t)dt=0.
\]
and for $k\ne0$ we have
\begin{align*}
C_k(\widetilde{B}_1)&=\int_0^1B_1(t)e^{-2i\pi kt}dt=\int_0^1\left(t-\frac12\right)e^{-2i\pi kt}dt\\
&=\left.-\left(t-\frac12\right)\frac{e^{-2i\pi kt}}{2i\pi k}\right]_0^1
+\frac{1}{2i\pi k}\int_0^1e^{-2i\pi kt}\,dt=\frac{i}{2\pi k}
\end{align*}
Thus, according to Dirichlet's theorem \cite[Corollary 3.3.9]{gra} we conclude that
for $x\in\reel\setminus\ent$ we have
\[
\widetilde{B}_1(x)=\sum_{k\in\ent\setminus\{0\}}\frac{i}{2\pi k}e^{2i\pi k x}
=-\frac{1}{\pi}\sum_{k=1}^\infty\frac{\sin(2\pi k x)}{k}
\]
and \itemref{pr311} follows.\bg
\qquad Let us now consider the case of $\widetilde{B}_n$, for $n\geq2$. According to Corollary \ref{cor11} we have
\[
C_0(\widetilde{B}_n)=\int_0^1B_n(t)dt=0.
\]
and for $k\ne0$ we find that
\begin{align*}
C_k(\widetilde{B}_n)&=\int_0^1B_n(t)e^{-2i\pi kt}dt\\
&=\left.-B_n(t)\frac{e^{-2i\pi kt}}{2i\pi k}\right]_0^1
+\frac{1}{2i\pi k}\int_0^1B_n^\prime(t)e^{-2i\pi kt}\,dt\\
&=\frac{B_n(0)-B_n(1)}{2i\pi k}+\frac{n}{2i\pi k}\int_0^1B_{n-1}(t)e^{-2i\pi kt}dt\\
&=\frac{n}{2i\pi k}\, C_k(\widetilde{B}_{n-1})
\end{align*}
where we used Corollary \ref{cor11} and Proposition \ref{pr21}~\itemref{pr211}. This allows us to prove by induction
on $n$ that,
\[
\forall\,n\in\nat^*,~\forall\,k\in\ent\setminus\{0\},\qquad
C_k(\widetilde{B}_n)=-\frac{n!}{(2i\pi k)^{n}}
\]
Thus, because of the continuity of $\widetilde{B}_n$ for $n\geq2$, and of the uniform convergence of the Fourier series
of $\widetilde{B}_n$ in this case, we conclude that \cite[Ch.~I, Sec.~3]{katz}, for $x\in \reel$ we have
\[
\widetilde{B}_n(x)=-\sum_{k\in\ent\setminus\{0\}}\frac{n!}{(2i\pi  k)^n}e^{2i\pi k x}=
-\frac{n!}{(2i\pi)^n}\sum_{k=1}^\infty\frac{e^{2i\pi k x}+(-1)^ne^{-2i\pi k x}}{k^n}
\]
and \itemref{pr312} follows by considering separately the cases of $n$ even and $n$ odd.
\end{proof}
\bg
\qquad In particular, we have the following well-known result:
\begin{corollary}\label{cor31}
For $n\geq1$ we have
\begin{align*}
\zeta(2n)&\egdef\sum_{k=1}^\infty\frac{1}{k^{2n}}=(-1)^{n-1}\frac{(2\pi)^{2n}}{2\cdot(2n)!}b_{2n}\\
\eta(2n)&\egdef\sum_{k=1}^\infty\frac{(-1)^{k-1}}{k^{2n}}=(-1)^{n}\frac{(2\pi)^{2n}}{2\cdot(2n)!}B_{2n}\left(\frac12\right)
=(-1)^{n-1}\frac{(2^{2n}-2)\pi^{2n}}{2\cdot(2n)!}b_{2n}
\end{align*}
$($Here $\zeta(\cdot)$ is the well-know ``Riemann Zeta function,''.$)$ 
\end{corollary}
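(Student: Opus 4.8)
The plan is to read off both identities directly from Proposition \ref{pr31}~\itemref{pr312} by specializing the cosine Fourier expansion of $B_{2n}$ at the two rational points $x=0$ and $x=\frac12$, and then to translate the value $B_{2n}\left(\frac12\right)$ into Bernoulli numbers using Proposition \ref{pr22}~\itemref{pr222}.

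First I would recall that for every positive integer $n$ and every $x\in[0,1]$,
\[
B_{2n}(x)=(-1)^{n+1}\frac{2(2n)!}{(2\pi)^{2n}}\sum_{k=1}^\infty\frac{\cos(2\pi kx)}{k^{2n}}.
\]
Setting $x=0$, every cosine equals $1$, so the right-hand side becomes $(-1)^{n+1}\frac{2(2n)!}{(2\pi)^{2n}}\zeta(2n)$, while the left-hand side is $B_{2n}(0)=b_{2n}$ by Definition \ref{def2}. Solving for $\zeta(2n)$ and using $(-1)^{n+1}=(-1)^{n-1}$ gives the first formula. Setting $x=\frac12$ instead, we have $\cos(2\pi k\cdot\frac12)=\cos(\pi k)=(-1)^k$, so the sum becomes $\sum_{k\ge1}(-1)^k k^{-2n}=-\eta(2n)$; comparing with $B_{2n}\left(\frac12\right)$ yields $\eta(2n)=(-1)^n\frac{(2\pi)^{2n}}{2\cdot(2n)!}B_{2n}\left(\frac12\right)$, which is the first equality in the $\eta$-line.

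To finish the $\eta$-computation I would substitute $B_{2n}\left(\frac12\right)=\left(2^{1-2n}-1\right)b_{2n}$ from Proposition \ref{pr22}~\itemref{pr222}, so that
\[
\eta(2n)=(-1)^n\frac{(2\pi)^{2n}}{2\cdot(2n)!}\left(2^{1-2n}-1\right)b_{2n}
=(-1)^{n-1}\frac{(2\pi)^{2n}\left(1-2^{1-2n}\right)}{2\cdot(2n)!}b_{2n},
\]
and then simplify $(2\pi)^{2n}\left(1-2^{1-2n}\right)=2^{2n}\pi^{2n}-2\pi^{2n}=\left(2^{2n}-2\right)\pi^{2n}$ to obtain the stated closed form; equivalently one can record the relation $\eta(2n)=\left(1-2^{1-2n}\right)\zeta(2n)$ and combine it with the first formula. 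There is no real obstacle here: the only thing to watch is the bookkeeping of the signs $(-1)^{n+1}=(-1)^{n-1}$ and the elementary manipulation of the powers of $2$; the substantive analytic content has already been established in Proposition \ref{pr31}.
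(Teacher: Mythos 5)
Your proposal is correct and is exactly the argument the paper intends: the corollary is stated as an immediate consequence of Proposition \ref{pr31}~\itemref{pr312}, obtained by evaluating the Fourier expansion of $B_{2n}$ at $x=0$ and $x=\frac12$ and then invoking Proposition \ref{pr22}~\itemref{pr222}. The sign bookkeeping and the simplification of the powers of $2$ are all handled correctly.
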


\qquad Using Bessel-Parseval's identity \cite[Ch.~I, Sec.~5]{katz} we obtain the following corollary :\bg
\bg
\begin{corollary}\label{cor32}
If $n$ and $m$ are positive integers then
\[
\int_0^1B_n(x)B_m(x)\,dx=\frac{(-1)^{m-1}}{\binom{n+m}{n}}b_{n+m}
\]
In particular, for $n\in\nat^*$ we have
\[
\int_0^1\left(\frac{B_n(x)}{n!}\right)^2\,dx= (-1)^{n-1} \frac{b_{2n}}{(2n)!}=  \frac{\abs{b_{2n}}}{(2n)!}
\]
\end{corollary}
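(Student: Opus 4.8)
The plan is to compute the Fourier coefficients of $\widetilde{B}_n$ and $\widetilde{B}_m$, then apply the Bessel--Parseval identity $\int_0^1 f(x)\overline{g(x)}\,dx = \sum_{k\in\ent} C_k(f)\overline{C_k(g)}$ to the pair $f=\widetilde{B}_n$, $g=\widetilde{B}_m$. From the proof of Proposition~\ref{pr31} we already have the explicit formula $C_k(\widetilde{B}_n) = -n!/(2i\pi k)^n$ for all $k\in\ent\setminus\{0\}$ and all $n\geq1$, together with $C_0(\widetilde{B}_n)=0$. Since $B_n$ and $B_m$ are real-valued, $\int_0^1 B_n(x)B_m(x)\,dx = \sum_{k\neq0} C_k(\widetilde{B}_n)\overline{C_k(\widetilde{B}_m)}$, and the validity of Parseval here needs only that both functions are square-integrable, which is automatic for these bounded piecewise-polynomial functions.

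The core computation is then
\[
\sum_{k\neq0} C_k(\widetilde{B}_n)\overline{C_k(\widetilde{B}_m)}
= \sum_{k\neq0}\frac{n!}{(2i\pi k)^n}\cdot\frac{m!}{(-2i\pi k)^m}
= \frac{n!\,m!}{(2i\pi)^{n}(-2i\pi)^{m}}\sum_{k\neq0}\frac{1}{k^{n+m}}.
\]
The sum $\sum_{k\neq0}k^{-(n+m)}$ vanishes when $n+m$ is odd (the $k$ and $-k$ terms cancel), which is consistent with the right-hand side: when $n+m$ is odd, $b_{n+m}=0$ by Proposition~\ref{pr22}\itemref{pr221}. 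When $n+m$ is even, $\sum_{k\neq0}k^{-(n+m)} = 2\zeta(n+m)$, and I substitute the value of $\zeta(2j)$ from Corollary~\ref{cor31}. What remains is bookkeeping with the powers of $i$: writing $n+m=2j$, one has $(2i\pi)^n(-2i\pi)^m = (2\pi)^{2j} i^n (-i)^m = (2\pi)^{2j}(-1)^m i^{n+m} = (2\pi)^{2j}(-1)^m(-1)^{j}$, and combining this with the sign $(-1)^{j-1}$ coming from $\zeta(2j) = (-1)^{j-1}(2\pi)^{2j}b_{2j}/(2\cdot(2j)!)$ collapses everything to $(-1)^{m-1}b_{n+m}/\binom{n+m}{n}$ after recognizing $n!\,m!/(n+m)! = 1/\binom{n+m}{n}$.

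For the ``in particular'' clause I set $m=n$, so $n+m=2n$ is automatically even, $\binom{2n}{n}$ appears, and $\int_0^1 B_n(x)^2\,dx = (-1)^{n-1}b_{2n}/\binom{2n}{n}$; dividing by $(n!)^2$ gives $(-1)^{n-1}b_{2n}/(2n)!$, and the final equality $(-1)^{n-1}b_{2n} = \abs{b_{2n}}$ is exactly Remark~\ref{rm1}. The main obstacle, such as it is, is purely the sign/power-of-$i$ arithmetic in the even case; there is no analytic subtlety, since uniform convergence of the Fourier series for $n\geq2$ and $L^2$ convergence for $n=1$ both justify Parseval without further comment.
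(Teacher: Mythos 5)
Your proof is correct and follows essentially the same route as the paper: Bessel--Parseval applied to $\widetilde{B}_n$ and $\widetilde{B}_m$ with the coefficients $C_k(\widetilde{B}_n)=-n!/(2i\pi k)^n$, followed by Corollary~\ref{cor31}. The only cosmetic difference is that the paper disposes of the case $n+m$ odd by the substitution $x\leftarrow 1-x$ and Proposition~\ref{pr21}\itemref{pr212}, whereas you handle it inside Parseval via the cancellation of the $k$ and $-k$ terms; both are valid.
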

\begin{proof}
Indeed, if $n+m\equiv1\mod2$ then the change of variables $x\leftarrow1-x$ proves, using Proposition
\ref{pr21}~\itemref{pr212}, that
the considered integral $\int_0^1B_n(x)B_m(x)\,dx $ equals $0$, and the conclusion follows from
Proposition \ref{pr22}~\itemref{pr221}.\bg
So, let us suppose that $n\equiv m\mod2$. In this case, by Bessel-Parseval's identity, we have
\begin{align*}
\int_0^1B_n(x)B_m(x)\,dx&=\sum_{k\in\ent}C_k(\widetilde{B}_n)\adh{C_k(\widetilde{B}_m)}\\
&=(-1)^{m}\frac{n!\,m!}{(2i\pi)^{n+m}}
\sum_{k\in\ent\setminus\{0\}}\frac{1}{k^{n+m}}
\end{align*} 
and the desired conclusion follows by Corollary \ref{cor31}.
\end{proof}
\bg
\begin{application} \textbf{One more formula for Bernoulli polynomials.} 

\qquad Let us consider the polynomial $T_n$ defined by
\[
T_n(X)=\frac{1}{n+1}\sum_{k=0}^{n}B_k(X)B_{n-k}(X).
\]
Clearly, for $n\geq1$, we have
\begin{align*}
(n+1)T'_n(X)&= \sum_{k=1}^{n}k B_{k-1}(X)B_{n-k}(X)+\sum_{k=0}^{n-1}(n-k) B_{k}(X)B_{n-k-1}(X)\\
&= \sum_{k=0}^{n-1}(k+1) B_{k}(X)B_{n-k-1}(X)+\sum_{k=0}^{n-1}(n-k) B_{k}(X)B_{n-k-1}(X)\\
&= (n+1)\sum_{k=0}^{n-1} B_{k}(X)B_{n-1-k}(X)=\frac{n+1}{n}T_{n-1}
\end{align*}
That is $T'_n=nT_{n-1}$.  Now, since $T_n$ is a polynomial of degree $n$ there are $(\lambda_{k}^{(n)})_{0\leq k\leq n}$ such that
$
T_n(X)=\sum_{k=0}^n\lambda_k^{(n)}B_k$, and from $T'_n=nT_{n-1}$ we conclude that
\[
\sum_{k=0}^{n-1}(k+1)\lambda_{k+1}^{(n)}B_{k}=\sum_{k=0}^{n-1}n\lambda_k^{(n-1)}B_k.
\]
Thus $(k+1)\lambda_{k+1}^{(n)}=n\lambda_k^{(n-1)}$ for $0\leq k\leq n-1$. It follows that 
$\lambda_{k}^{(n)}=\com{n}{k} \lambda_0^{(n-k)}$ for $0\leq k\leq n$. So, we have proved
that
\[
\frac{1}{n+1}\sum_{k=0}^{n}B_k(X)B_{n-k}(X)=\sum_{k=0}^n\com{n}{k}\lambda_0^{(k)}B_{n-k}
\]
Integrating on $[0,1]$, and using Corollaries \ref{cor11} and \ref{cor32} , we obtain 
$\lambda_0^{(0)}=1$, $\lambda_0^{(1)}=0$, and for $n\geq 2$:
\begin{align*}
\lambda_0^{(n)}&=\frac{1}{n+1}\sum_{k=0}^{n}\int_0^1B_k(t)B_{n-k}(t)\,dt=
\frac{b_{n}}{n+1}\sum_{k=1}^{n-1}\frac{(-1)^{k-1}}{\binom{n}{k}}\\
&=b_{n}\sum_{k=1}^{n-1}(-1)^{k-1}\frac{k!(n-k)!}{(n+1)!}=-b_n\int_0^1\left(\sum_{k=1}^{n-1}t^{n-k}(t-1)^k\right)\,dt\\
&=
-b_n\int_0^1\big(t^{n+1}-(t-1)^{n+1}-t^n-(t-1)^n\big)\,dt=\frac{1+(-1)^n}{(n+2)(n+1)}b_n.
\end{align*}
That is $\lambda_0^{(2m+1)}=0$ and $\lambda_0^{(2m)}=\frac{1}{(m+1)(2m+1)}b_{2m}$. Therefore, we
have proved that
\[
\frac{1}{n+1}\sum_{k=0}^{n}B_k(X)B_{n-k}(X)= 
\sum_{k=0}^{\floor{n/2}}\com{n}{2k}\frac{b_{2k}}{(k+1)(2k+1)}B_{n-2k}(X).
\]
In particular, taking $n=2m$ and $x=0$ we find, for $m\ne1$, that
\[\sum_{k=0}^{m}b_{2k}b_{2m-2k}=\frac{1}{m+1} \sum_{k=0}^{m}\com{2m+2}{2k+2}b_{2k}b_{2m-2k}.\]
or equivalently, for $m\ne 2$ :
\[\sum_{k=1}^{m}b_{2(k-1)}b_{2(m-k)}=\frac{1}{m} \sum_{k=1}^{m}\com{2m}{2k}b_{2(k-1)}b_{2(m-k)}.\]
 \end{application}
This is  an unusual formula since it combines both convolution and binomial convolution. 

\bg 
\section{Bernoulli polynomials on the unit interval $[0,1]$}\label{sec4}
\bn
\qquad Our first result concerns the sequence of Bernoulli numbers and it follows immediately from
Corollary \ref{cor31}:\bg
\begin{proposition}\label{pr41} The sequence of Bernoulli numbers $(b_{2n})_{n\geq1}$ satisfies the following:

\begin{enumeratei}
\item\label{pr411} For every positive integer $n$, we have
$\ds \abs{b_{2n}}<2\left(1+\frac{3}{2^{2n}}\right)\,\frac{(2n)!}{(2\pi)^{2n}}<4\frac{(2n)!}{(2\pi)^{2n}}$.
\item\label{pr412} Asymptotically, for $n$ in the neighborhood of $+\infty$, we have
$\ds \abs{b_{2n}}\sim_{+\infty}2\frac{(2n)!}{(2\pi)^{2n}}$.
\end{enumeratei}
\end{proposition}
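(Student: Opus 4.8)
The plan is to reduce both parts to elementary estimates on the Riemann zeta values $\zeta(2n)=\sum_{k\ge1}k^{-2n}$. Corollary \ref{cor31} gives $\zeta(2n)=(-1)^{n-1}\frac{(2\pi)^{2n}}{2\,(2n)!}\,b_{2n}$; since the left-hand side is positive, this simultaneously reconfirms Remark \ref{rm1} and yields the clean identity
\[
\abs{b_{2n}}=(-1)^{n-1}b_{2n}=\frac{2\,(2n)!}{(2\pi)^{2n}}\,\zeta(2n).
\]
Hence \itemref{pr411} is equivalent to the inequality $\zeta(2n)<1+3/2^{2n}$, together with the trivial observation that $2(1+3/2^{2n})<4$ whenever $2^{2n}>3$, i.e.\ for all $n\ge1$; and \itemref{pr412} is equivalent to $\zeta(2n)\to1$ as $n\to\infty$.

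For the key inequality I would bound the tail $\zeta(2n)-1=\sum_{k\ge2}k^{-2n}$ by pulling out a power of $4$: for $k\ge2$ and $n\ge1$ one has $k^{2n}=k^2\,k^{2n-2}\ge4^{n-1}k^2$, hence $k^{-2n}\le4^{1-n}k^{-2}$. Summing over $k\ge2$ gives
\[
\zeta(2n)-1\le4^{1-n}\bigl(\zeta(2)-1\bigr)=4^{1-n}\Bigl(\tfrac{\pi^2}{6}-1\Bigr)<4^{1-n}\cdot\tfrac34=\tfrac{3}{2^{2n}},
\]
the strict step using only the numerical fact $\pi^2<10.5$. Multiplying through by $\frac{2\,(2n)!}{(2\pi)^{2n}}$ then gives \itemref{pr411}.

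For \itemref{pr412}, the very same estimate shows $1<\zeta(2n)\le1+4^{1-n}(\pi^2/6-1)\to1$, so $\zeta(2n)\to1$ and therefore $\abs{b_{2n}}=\frac{2\,(2n)!}{(2\pi)^{2n}}\zeta(2n)\sim_{+\infty}2\,\frac{(2n)!}{(2\pi)^{2n}}$.

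I do not expect a genuine obstacle: everything is forced once Corollary \ref{cor31} is invoked. The only point needing a moment's care is making the tail bound quantitatively strong enough to land strictly below the stated constant $3$; extracting the factor $4^{1-n}$ and reducing to the single inequality $\zeta(2)=\pi^2/6<7/4$ is the most economical route, and it automatically delivers the asymptotic statement as well.
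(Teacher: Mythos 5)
Your proof is correct, and it follows the same overall route as the paper: invoke Corollary \ref{cor31} to write $\abs{b_{2n}}=\frac{2\,(2n)!}{(2\pi)^{2n}}\zeta(2n)$ and then establish the single estimate $\zeta(2n)-1<3\cdot 2^{-2n}$, from which both the inequality and the asymptotics are immediate. The only (minor) divergence is in how that tail is bounded: the paper keeps the $k=2$ term aside and compares $\sum_{k\ge3}k^{-2n}$ with $\int_2^\infty t^{-2n}\,dt$, arriving at $\zeta(2n)-1\le\frac{2n+1}{2n-1}2^{-2n}\le 3\cdot 2^{-2n}$, whereas you factor $k^{-2n}\le 4^{1-n}k^{-2}$ and reduce everything to the numerical fact $\zeta(2)-1<\tfrac34$; both are one-line elementary estimates of comparable strength, and yours has the small aesthetic advantage of not needing the case check $2n+1\le 3(2n-1)$.
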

\begin{proof} Noting that, for $t\in[k-1,k]$, we have $k^{-2n}\leq t^{-2n}$ we conclude that
\[
\sum_{k=3}^\infty\frac{1}{k^{2n}} \leq \sum_{k=3}^\infty\int_{k-1}^k\frac{dt}{t^{2n}}
 =\int_2^\infty\frac{dt}{t^{2n}}=\frac{2}{2n-1}\cdot\frac{1}{2^{2n}}
\]
Thus, for every $n\geq1$ we have
\[
1<\zeta(2n)=\sum_{k=1}^\infty\frac{1}{k^{2n}}\leq 1+\frac{1}{2^{2n}}+\frac{2}{2n-1}\cdot\frac{1}{2^{2n}}
\]
or, equivalently
\[
1<\zeta(2n)\leq 1+\frac{2n+1}{2n-1}\cdot\frac{1}{2^{2n}}\leq 1+  \frac{3}{2^{2n}}
\leq 1+ \frac{3}{4}<2
\]
Hence, we have proved that $1<\zeta(2n)<2$ for every $n\geq1$ and that $\lim\limits_{n\to\infty}\zeta(2n)=1$.
This implies the desired conclusion using Corollary \ref{cor31}.
\end{proof}
\bg
\begin{remark}\label{rm5}
Using Stirling's Formula \cite[pp.~257]{abr} we see that
for large $n$ we have
\[
 \abs{b_{2n}}\sim_{+\infty}4\sqrt{\pi n}\left(\frac{n}{e\pi}\right)^{2n}
\]
\end{remark}
\bg
\qquad It is clear, according to Proposition~\ref{pr23} that the function $x\mapsto\abs{B_{2n}(x)}$ attains 
its maximum on the interval $[0,1]$ at $x=0$. Thus, for $n\geq1$ we have
\[
\sup_{x\in[0,1]}\abs{B_{2n}(x)}=\abs{B_{2n}(0)}=\abs{b_{2n}}
\]
\bg
\qquad Determining the maximum of $x\mapsto\abs{B_{2n+1}(x)}$ on the interval $[0,1]$ is more difficult. In this 
regard, we have the following result.
\bg
\begin{proposition}\label{pr42}
For every positive integer $n$ we have

\begin{enumeratei}
\item\label{pr421} $\ds\quad \sup_{x\in[0,1]}\abs{B_{2n+1}(x)}<\frac{2n+1}{2\pi}\abs{b_{2n}}$.
\item\label{pr422} $\ds\quad \abs{B_{2n+1}\left(\frac14\right)}\geq\left(1-\frac{4}{2^{2n}}\right)
\frac{2n+1}{2\pi}\abs{b_{2n}}$.
\end{enumeratei}
\end{proposition}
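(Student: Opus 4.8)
The plan is to derive both inequalities directly from the Fourier expansion of $B_{2n+1}$ in Proposition~\ref{pr31}~\itemref{pr312}, after first putting the two sides of each inequality in a common homogeneous form. By Corollary~\ref{cor31} one has $\abs{b_{2n}}=\frac{2(2n)!}{(2\pi)^{2n}}\zeta(2n)$, so that
\[
\frac{2n+1}{2\pi}\,\abs{b_{2n}}=\frac{2(2n+1)!}{(2\pi)^{2n+1}}\,\zeta(2n),
\]
while Proposition~\ref{pr31}~\itemref{pr312} gives, for every $x\in[0,1]$,
\[
\abs{B_{2n+1}(x)}=\frac{2(2n+1)!}{(2\pi)^{2n+1}}\,\abs{\sum_{k=1}^\infty\frac{\sin(2\pi kx)}{k^{2n+1}}}.
\]
Since both quantities carry the same prefactor $\frac{2(2n+1)!}{(2\pi)^{2n+1}}$, the whole proposition reduces to comparing elementary Dirichlet series.

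For \itemref{pr421}, the triangle inequality gives $\abs{\sum_{k\geq1}\sin(2\pi kx)/k^{2n+1}}\leq\sum_{k\geq1}k^{-(2n+1)}=\zeta(2n+1)$ for every $x$, and the strict bound $k^{-(2n+1)}<k^{-2n}$ for all $k\geq2$ yields $\zeta(2n+1)<\zeta(2n)$. Hence $\sup_{x\in[0,1]}\abs{B_{2n+1}(x)}\leq\frac{2(2n+1)!}{(2\pi)^{2n+1}}\zeta(2n+1)<\frac{2n+1}{2\pi}\abs{b_{2n}}$, which is \itemref{pr421}.

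For \itemref{pr422}, I would evaluate the Fourier series at $x=\frac14$. Because $\sin(\pi k/2)=0$ when $k$ is even and $\sin\bigl(\pi(2j+1)/2\bigr)=(-1)^j$, the series collapses to an alternating sum over odd indices,
\[
B_{2n+1}\left(\tfrac14\right)=(-1)^{n+1}\frac{2(2n+1)!}{(2\pi)^{2n+1}}\,\sigma_n,\qquad\sigma_n\egdef\sum_{j=0}^\infty\frac{(-1)^j}{(2j+1)^{2n+1}}.
\]
As $\sigma_n$ is an alternating series whose terms decrease strictly to $0$, one has $\sigma_n\geq 1-3^{-(2n+1)}>0$, so $\abs{B_{2n+1}(\frac14)}=\frac{2(2n+1)!}{(2\pi)^{2n+1}}\sigma_n$. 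On the other hand, the bound $\zeta(2n)<1+3\cdot2^{-2n}$ (which is Proposition~\ref{pr41}~\itemref{pr411} rewritten via Corollary~\ref{cor31}), together with $\zeta(2n)\geq1$, gives
\[
\left(1-\frac{4}{2^{2n}}\right)\zeta(2n)\leq\zeta(2n)-\frac{4}{2^{2n}}<1-\frac{1}{2^{2n}}\leq 1-\frac{1}{3^{2n+1}}\leq\sigma_n,
\]
where the penultimate inequality is merely $2^{2n}\leq3^{2n+1}$. Multiplying through by the common prefactor $\frac{2(2n+1)!}{(2\pi)^{2n+1}}$ yields \itemref{pr422}.

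The two computations are short; the only delicate point is the bookkeeping in \itemref{pr422} — correctly identifying $B_{2n+1}(\frac14)$ with the alternating odd-index zeta value $\sigma_n$ and then chaining the elementary estimates on $\zeta(2n)$ so that precisely the factor $1-4\cdot2^{-2n}$ emerges on the right. Everything else is an immediate consequence of the Fourier expansion of Proposition~\ref{pr31} and of Corollary~\ref{cor31}.
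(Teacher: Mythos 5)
Your proposal is correct and follows essentially the same route as the paper: part \itemref{pr421} via the triangle inequality applied to the Fourier expansion together with $\zeta(2n+1)<\zeta(2n)$, and part \itemref{pr422} by evaluating the Fourier series at $x=\tfrac14$, bounding the resulting alternating odd-index series below by $1-3^{-(2n+1)}$, and invoking the estimate $\zeta(2n)<1+3\cdot 2^{-2n}$ from Proposition~\ref{pr41}. The only difference is cosmetic: you normalize both sides by the common prefactor $\frac{2(2n+1)!}{(2\pi)^{2n+1}}$ and verify $\bigl(1-4\cdot 2^{-2n}\bigr)\zeta(2n)\leq\sigma_n$ by a short chain, whereas the paper checks the equivalent inequality $\frac{1-3^{-2n-1}}{1+3\cdot 2^{-2n}}\geq 1-\frac{4}{2^{2n}}$ directly.
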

\begin{proof}
In fact, using Proposition \ref{pr31}~\itemref{pr312} we see that
\[
\sup_{x\in[0,1]}\abs{B_{2n+1}(x)}\leq\frac{2(2n+1)!}{(2\pi)^{2n+1}}\cdot\sum_{k=1}^\infty\frac{1}{k^{2n+1}}
<\frac{2(2n+1)!}{(2\pi)^{2n+1}}\cdot\sum_{k=1}^\infty\frac{1}{k^{2n}}.
\]
Thus, according to Corollary \ref{cor31} we conclude  that
\[
\sup_{x\in[0,1]}\abs{B_{2n+1}(x)}<\frac{2(2n+1)!}{(2\pi)^{2n+1}}\cdot\frac{(2\pi)^{2n}}{2\cdot(2n)!}\abs{b_{2n}}
=\frac{2n+1}{ 2\pi } \abs{b_{2n}}.
\]
which is \itemref{pr421}.\bg
\qquad On the other hand, for $n\in\nat$, using Proposition \ref{pr31}~\itemref{pr312} once more,  we obtain
\[
B_{2n+1}\left(\frac14\right)=(-1)^{n+1}\frac{2(2n+1)!}{(2\pi)^{2n+1}}\sum_{k=0}^\infty\frac{(-1)^k}{(2k+1)^{2n+1}},
\]
but the series above is alternating, so
\[
\sum_{k=0}^\infty\frac{(-1)^k}{(2k+1)^{2n+1}}>1-\frac{1}{3^{2n+1}}
\]
Thus
\[
(-1)^{n+1}B_{2n+1}\left(\frac14\right)>\left(1-\frac{1}{3^{2n+1}}\right) \frac{2(2n+1)!}{(2\pi)^{2n+1}}.
\]
Now, using Proposition \ref{pr41}~\itemref{pr411} we see that
\[
(-1)^{n+1}B_{2n+1}\left(\frac14\right)>\left(\frac{1-3^{-2n-1}}{1+3\cdot 2^{-2n}}\right)\frac{2n+1}{2\pi} \abs{b_{2n}},
\]
and  since
\[
\forall\,n\in\nat^*,\qquad\frac{1-3^{-2n-1}}{1+3\cdot 2^{-2n}}\geq 1-\frac{4}{2^{2n}}
\]
we obtain \itemref{pr422}.
\end{proof}
\bg
\begin{remark}\label{rm41}
Combining Corollary \ref{cor24} and Propositions \ref{pr41} and \ref{pr42} we see that,
for large $n$ we have 
\[
\sup_{x\in[0,1]}\abs{B_n(x)}\sim_{+\infty} \frac{2\cdot n!}{(2\pi)^n}.
\]
\end{remark}
\bg
\qquad Next, we will study the behavior of the unique zero of $B_{2n}$ in the interval $(0,1/2)$.\bg
\begin{proposition}\label{pr43}
For a positive integer $n$, let $\alpha_n$ be the unique zero of $B_{2n}$ that belongs to the interval $(0,1/2)$. Then the sequence
$(\alpha_n)_{n\geq1}$ satisfies the following inequality:
\[
\frac{1}{4}-\frac{1}{\pi\cdot 2^{2n}}<\alpha_n<\alpha_{n+1}<\frac{1}{4}.
\]
\end{proposition}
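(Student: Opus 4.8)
The plan is to pass to the Fourier expansion of Proposition~\ref{pr31}~\itemref{pr312} and then prove the three inequalities of the chain $\frac14-\delta_n<\alpha_n<\alpha_{n+1}<\frac14$, where $\delta_n:=\frac1{\pi 2^{2n}}$, one after another, in that order, each step using the previous ones. Writing $f_n(x)=\sum_{k\ge1}k^{-2n}\cos(2\pi kx)$, Proposition~\ref{pr31}~\itemref{pr312} says $f_n=(-1)^{n+1}\frac{(2\pi)^{2n}}{2(2n)!}B_{2n}$; hence, by Proposition~\ref{pr23}, $f_n$ is strictly decreasing on $\left[0,\frac12\right]$ and $\alpha_n$ is its only zero there. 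So for $a\in\left(0,\frac12\right)$ one has $\alpha_n<a\iff f_n(a)<0$ and $\alpha_n>a\iff f_n(a)>0$, and since $\cos(\pi k/2)$ vanishes for odd $k$ and equals $(-1)^{k/2}$ for even $k$, the value $f_n\!\left(\frac14\right)=\sum_{m\ge1}(-1)^m(2m)^{-2n}=-2^{-2n}\eta(2n)$ is negative. This already gives $\alpha_n<\frac14$ for every $n$, and in particular $\alpha_{n+1}<\frac14$.

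For the lower bound I would estimate $f_n\!\left(\frac14-\delta_n\right)=f_n\!\left(\frac14\right)+\int_{1/4-\delta_n}^{1/4}\bigl(-f_n'(x)\bigr)\,dx$, where $-f_n'(x)=2\pi\sum_{k\ge1}k^{1-2n}\sin(2\pi kx)$. On $\left[\frac14-\delta_n,\frac14\right]$ we have $\sin(2\pi x)\ge\cos(2\pi\delta_n)$, while $\sum_{k\ge2}k^{1-2n}\abs{\sin(2\pi kx)}\le\zeta(2n-1)-1$; since $2\pi\delta_n=2^{1-2n}$ and $\eta(2n)<1$, this leads to
\[
f_n\!\left(\tfrac14-\delta_n\right)\ge\frac1{2^{2n}}\Bigl(2\cos(2^{1-2n})+2-\eta(2n)-2\zeta(2n-1)\Bigr),
\]
and for $n\ge2$ the bracket is at least $2\cos\tfrac18+1-2\zeta(3)>0$ by monotonicity of $\zeta$. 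The case $n=1$ has to be settled by hand: $\alpha_1=\tfrac12-\tfrac{\sqrt3}{6}$, so $\tfrac14-\alpha_1=\tfrac{2\sqrt3-3}{12}<\tfrac1{4\pi}$ because $\pi(2\sqrt3-3)<3$. I expect this split to be the main difficulty: the derivative estimate is clean for $n\ge2$ but collapses at $n=1$, where $\zeta(2n-1)$ becomes the divergent $\zeta(1)$ and one must revert to the explicit quadratic $B_2$.

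For the monotonicity, the same argument applied to $f_{n+1}$ shows it is strictly decreasing on $\left[0,\frac12\right]$ with unique zero $\alpha_{n+1}$, so it suffices to prove $f_{n+1}(\alpha_n)>0$. Subtracting $f_n(\alpha_n)=0$ gives $f_{n+1}(\alpha_n)=\sum_{k\ge1}\cos(2\pi k\alpha_n)\bigl(k^{-2n-2}-k^{-2n}\bigr)=-\sum_{k\ge2}\cos(2\pi k\alpha_n)\frac{k^2-1}{k^{2n+2}}$, so I need $\sum_{k\ge2}\cos(2\pi k\alpha_n)\frac{k^2-1}{k^{2n+2}}<0$. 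For $n\ge2$ the two inequalities already proved place $\alpha_n$ in $\left(\frac14-\delta_n,\frac14\right)$, hence $4\pi\alpha_n\in(\pi-4\pi\delta_n,\pi)$ and $\cos(4\pi\alpha_n)<-\cos(4\pi\delta_n)$; combining the $k=2$ term with the crude tail bound $\sum_{k\ge3}\frac{k^2-1}{k^{2n+2}}<\sum_{k\ge3}k^{-2n}\le\frac2{(2n-1)2^{2n}}$ (as in the proof of Proposition~\ref{pr41}) gives
\[
\sum_{k\ge2}\cos(2\pi k\alpha_n)\,\frac{k^2-1}{k^{2n+2}}<\frac1{2^{2n}}\Bigl(\frac2{2n-1}-\tfrac34\cos(4\pi\delta_n)\Bigr)<0,
\]
since $4\pi\delta_n=2^{2-2n}\le\tfrac14$ and $\tfrac34\cos\tfrac14>\tfrac23$ for $n\ge2$. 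For $n=1$ I would just compute $B_4(\alpha_1)=\alpha_1^4-2\alpha_1^3+\alpha_1^2-\tfrac1{30}=-\tfrac1{180}$ using $\alpha_1^2=\alpha_1-\tfrac16$, so that $f_2(\alpha_1)$ has the sign of $-B_4(\alpha_1)$ and is positive. Chaining the three inequalities gives the statement.
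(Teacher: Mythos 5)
Your proof is correct and follows essentially the same route as the paper: both arguments reduce everything to sign evaluations of the cosine series $\sum_{k\ge1}k^{-2n}\cos(2\pi kx)$ at $\tfrac14$ and at $\tfrac14-\tfrac{1}{\pi\cdot2^{2n}}$, use the same telescoping identity $f_{n+1}(\alpha_n)=-\sum_{k\ge2}\bigl(1-k^{-2}\bigr)k^{-2n}\cos(2\pi k\alpha_n)$ together with $\cos(4\pi\alpha_n)<-\cos(4\pi\delta_n)$ for the monotonicity, and settle $n=1$ by hand via the explicit root of $B_2$ and the value $B_4(\alpha_1)=-\tfrac1{180}$. The only real variation is in the lower bound, where the paper evaluates the series directly at $x_n=\tfrac14-\tfrac{1}{\pi\cdot2^{2n}}$ (getting $\sin(2^{1-2n})$ minus a $\zeta(2n)-1$ tail) while you integrate $-f_n'$ from $\tfrac14$ at the cost of a $\zeta(2n-1)$ bound; both estimates close cleanly for $n\ge2$.
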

\begin{proof}
First, note that $ \alpha_1=\frac12-\frac{1}{2\sqrt{3}}$ so 
$\frac14\left(1-\frac{1}{\pi}\right)<\alpha_1<\frac14$. On the other hand, since
$ \alpha_1(1-\alpha_1)=\frac16$ and $  B_4(X)=X^2(1-X)^2-\frac{1}{30}$ we conclude that
\[
B_4(\alpha_1)=-\frac{1}{180}<0\quad\text{and}\quad B_4\left(\frac14\right)=\frac{269}{7680}>0.
\]
This proves the desired inequality for $n=1$.\bg
\qquad Now, let us suppose that $n\geq2$. Using Proposition \ref{pr21}~\itemref{pr213} with $p=4$ and $X=0$ we obtain
\[
B_{2n}(0)+B_{2n}\left(\frac14\right)+B_{2n}\left(\frac12\right)+B_{2n}\left(\frac34\right)=\frac{1}{4^{2n-1}}B_{2n}(0).
\]
But, according to Proposition \ref{pr21}~\itemref{pr212}, 
$B_{2n}\left(\frac14\right)=B_{2n}\left(\frac34\right)$, and using Proposition \ref{pr22}~\itemref{pr222}, we have also
$B_{2n}\left(\frac12\right)=(2^{1-2n}-1)b_{2n}$. Hence
\[
B_{2n}\left(\frac14\right)=\frac{1}{2^{2n}}\left(2^{1-2n}-1\right)b_{2n}=\frac{1}{2^{2n}}B_{2n}\left(\frac12\right).
\]
This proves that $B_{2n}\left(\frac14\right)B_{2n}\left(\frac12\right)>0$ and 
$B_{2n}\left(\frac14\right)B_{2n}(0)<0$. It follows that $0<\alpha_n<\frac14$.\bg
Now, let us define the function $h_n$ by
\[
h_n(x)=\sum_{k=1}^\infty\frac{\cos(2\pi k x)}{k^{2n}}.
\]
Also, let $x_n=\frac{1}{4}-\frac{1}{\pi\cdot 2^{2n}}$. Clearly we have
\begin{align*}
h_n(x_n)&=\cos\left(\frac{\pi}{2}-\frac{2}{2^{2n}}\right)+\sum_{k=2}^\infty\frac{\cos(2\pi k x_n)}{k^{2n}}\\
&\geq \sin\left(\frac{2}{2^{2n}}\right)-\sum_{k=2}^\infty\frac{1}{k^{2n}}
>\sin\left(\frac{2}{2^{2n}}\right)-\frac{2n+1}{2n-1}\cdot\frac{1}{2^{2n}}
\end{align*}
where we used the inequality $\zeta(2n)-1<\frac{2n+1}{2n-1}2^{-2n}$ from Proposition \ref{pr41}. 

Finally, using the inequality $\sin x\geq x-\frac{x^3}{6}$ which is valid for $x\geq0$, 
and recalling that $n\geq 2$
we conclude that
\begin{align*}
h_n(x_n)&>\frac{1}{2^{2n}}\left(2-\frac{2n+1}{2n-1}-\frac{4}{3\cdot 2^{4n}}\right)\\
h_n(x_n)&>\frac{1}{2^{2n}}\left(\frac{2n-3}{2n-1}-\frac{4}{3\cdot 2^{4n}}\right)\geq
\frac{1}{3\cdot 2^{2n}}\left(1-\frac{1}{2^{6}}\right)>0
\end{align*}
This proves, according to Proposition \ref{pr31}~\itemref{pr312}, that 
$B_{2n}(x_n)B_{2n}(0)>0$ and consequently $x_n<\alpha_n$.\bg
\qquad Next, let us show that
$h_{n+1}(\alpha_n)>0$, because this implies, according to Proposition \ref{pr31}~\itemref{pr312}, that
$B_{2n+2}(\alpha_n)B_{2n+2}(0)>0$ and consequently $\alpha_n<\alpha_{n+1}$.\bg
\qquad First, on one hand,  we have
\[
0=h_n(\alpha_n)=\sum_{k=1}^\infty\frac{\cos(2\pi k\alpha_n)}{k^{2n}}
=\cos(2\pi\alpha_n)+\sum_{k=2}^\infty\frac{\cos(2\pi k\alpha_n)}{k^{2n}}
\]
and on the other
\[
h_{n+1}(\alpha_n)=\sum_{k=1}^\infty\frac{\cos(2\pi k\alpha_n)}{k^{2n+2}}
=\cos(2\pi\alpha_n)+\sum_{k=2}^\infty\frac{\cos(2\pi k\alpha_n)}{k^{2n+2}}.
\]
Hence
\begin{align*}
h_{n+1}(\alpha_n)&=-\sum_{k=2}^\infty\left(1-\frac{1}{k^2}\right)\frac{\cos(2\pi k\alpha_n)}{k^{2n}}\\
&=-
\frac{3}{4}\cdot\frac{\cos(4\pi\alpha_n)}{2^{2n}}-\sum_{k=3}^\infty\left(1-\frac{1}{k^2}\right)\frac{\cos(2\pi k\alpha_n)}{k^{2n}}.
\end{align*}
But, from $\frac{1}{4}-\frac{1}{\pi\cdot2^{2n}}<\alpha_n<\frac14$ we conclude that
$\pi-\frac{4}{2^{2n}}<4\pi\alpha_n<\pi$, and consequently
\[
1-\frac{8}{2^{4n}}\leq1-2\sin^2\left(\frac{4}{2^{2n}}\right)
=\cos\left(\frac{4}{2^{2n}}\right)<-\cos(4\pi\alpha_n),
\]
Thus, using the estimate $\sum_{k=3}^\infty\frac{1}{k^{2n}}<\frac{2}{3\cdot 2^{2n}}$ obtained on the occasion of proving
Proposition \ref{pr41}, we get
\begin{align*}
h_{n+1}(\alpha_n)&\geq\frac{3}{2^{2n+2}}-\frac{6}{2^{6n}}-
\sum_{k=3}^\infty\left(1-\frac{1}{k^2}\right)\frac{1}{k^{2n}}\\
&>\frac{3}{2^{2n+2}}-\frac{6}{2^{6n}}-\sum_{k=3}^\infty\frac{1}{k^{2n}}
>\frac{3}{2^{2n+2}}-\frac{8}{2^{6n}}-\frac{2}{3\cdot 2^{2n}}\\
&\geq\frac{1}{2^{2n}}\left(\frac{1}{12}-\frac{8}{2^{4n}}\right)
\geq\frac{1}{2^{2n}}\left(\frac{1}{12}-\frac{1}{32}\right)>0
\end{align*}
This concludes the proof of the desired inequality.
\end{proof}
\bg
\begin{remark} The better inequality: $\alpha_n>\frac14-\frac{1}{2\pi}\, 2^{-2n}$, is proved
in \cite{leh}, but the increasing behaviour of the sequence is not discussed there.
Concerning the \textit{rational} zeros of Bernoulli polynomials, it was proved in \cite{ink} that
the only possible rational zeros for a Bernoulli polynomial are $0$, $\frac12$ and $1$.  
A detailed account of the complex zeros of Bernoulli polynomials can be found in \cite{dil2}.
\end{remark}
\bg
\qquad In the next proposition, we will show how to estimate the ``$L^1$-norm''   $\int_0^1\abs{B_n}$ 
in terms of the ``$L^\infty$-norm'' $\sup_{[0,1]}\abs{B_{n+1}}$.
\bg
\begin{proposition}\label{pr44}  The following two properties hold:
\begin{enumeratei}
\item\label{pr441} For every positive integer $n$, we have
$$\int_0^1\abs{B_n(x)}\,dx<16\frac{n!}{(2\pi)^{n+1}}.$$
\item\label{pr442} Asymptotically, for large $n$, we have
$$\int_0^1\abs{B_n(x)}\,dx\sim_{+\infty}8\frac{n!}{(2\pi)^{n+1}}$$
\end{enumeratei}
\end{proposition}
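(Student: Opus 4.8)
The plan is to reduce both statements to size estimates that are already available — Proposition~\ref{pr41} for the Bernoulli numbers $b_{2m}$ and Proposition~\ref{pr42} for $\sup_{[0,1]}\abs{B_{2n+1}}$ — by using the antiderivative relation $B_{k+1}'=(k+1)B_k$ from Proposition~\ref{pr21}~\itemref{pr211} together with the monotonicity picture of Proposition~\ref{pr23}. Since $\abs{B_k(x)}=\frac{1}{k+1}\abs{B_{k+1}'(x)}$, it suffices to compute the total variation of $B_{k+1}$ on $[0,1]$: one decomposes $[0,1]$ into the maximal subintervals $[a,b]$ on which $B_{k+1}$ is monotone, on each of which $B_{k+1}'$ keeps a constant sign, so that $\int_a^b\abs{B_{k+1}'}=\abs{B_{k+1}(b)-B_{k+1}(a)}$. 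Summing these contributions gives $\int_0^1\abs{B_k(x)}\,dx$ exactly, after which \itemref{pr441} and \itemref{pr442} follow by substituting the known estimates for $\abs{b_{2m}}$.

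Take first the odd index $n=2m-1$, so $k+1=2m$. By $\mathcal{P}_m$ (Proposition~\ref{pr23}) the function $B_{2m}$ is monotone on $\left[0,\frac12\right]$ and on $\left[\frac12,1\right]$; since $B_{2m}(0)=B_{2m}(1)=b_{2m}$ and $B_{2m}\!\left(\frac12\right)=(2^{1-2m}-1)b_{2m}$ by Proposition~\ref{pr22}~\itemref{pr221} and \itemref{pr222}, the decomposition gives
\[
\int_0^1\abs{B_{2m-1}(x)}\,dx=\frac{1}{2m}\cdot 2\abs{B_{2m}\!\left(\tfrac12\right)-b_{2m}}=\frac{2-2^{1-2m}}{m}\abs{b_{2m}}.
\]
Because $(2m)!/m=2\,(2m-1)!$, Proposition~\ref{pr41}~\itemref{pr411} ($\abs{b_{2m}}<4\,(2m)!/(2\pi)^{2m}$) yields $\int_0^1\abs{B_{2m-1}}<16\,(2m-1)!/(2\pi)^{2m}$, which is \itemref{pr441}, and Proposition~\ref{pr41}~\itemref{pr412} together with $2-2^{1-2m}\to2$ yields $\int_0^1\abs{B_{2m-1}}\sim 8\,(2m-1)!/(2\pi)^{2m}$, which is \itemref{pr442}.

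Now take the even index $n=2m$, so $k+1=2m+1$. By Proposition~\ref{pr21}~\itemref{pr211} the interior critical points of $B_{2m+1}$ are exactly the two zeros $\alpha_m$ and $1-\alpha_m$ of $B_{2m}$ (Proposition~\ref{pr23}), and $B_{2m+1}$ itself vanishes at $0,\frac12,1$; hence $B_{2m+1}$ is monotone on $[0,\alpha_m]$, $[\alpha_m,1-\alpha_m]$ and $[1-\alpha_m,1]$, and, using $B_{2m+1}(1-\alpha_m)=-B_{2m+1}(\alpha_m)$ (Proposition~\ref{pr21}~\itemref{pr212}), the three contributions collapse to
\[
\int_0^1\abs{B_{2m}(x)}\,dx=\frac{4}{2m+1}\abs{B_{2m+1}(\alpha_m)}.
\]
Moreover $\abs{B_{2m+1}(\alpha_m)}=\sup_{[0,1]}\abs{B_{2m+1}}$, since the maximum of $B_{2m+1}$, and of $-B_{2m+1}$, on $[0,1]$ can only be attained at an endpoint (value $0$) or at one of the interior critical points $\alpha_m,1-\alpha_m$, and these last two carry the same absolute value. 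Then Proposition~\ref{pr42}~\itemref{pr421} gives $\int_0^1\abs{B_{2m}}<\frac{2}{\pi}\abs{b_{2m}}<\frac{8}{\pi}\,(2m)!/(2\pi)^{2m}=16\,(2m)!/(2\pi)^{2m+1}$, which is \itemref{pr441}; while Proposition~\ref{pr42}~\itemref{pr422} gives $\sup_{[0,1]}\abs{B_{2m+1}}\geq\abs{B_{2m+1}\!\left(\tfrac14\right)}\geq\left(1-\frac{4}{2^{2m}}\right)\frac{2m+1}{2\pi}\abs{b_{2m}}$, hence $\int_0^1\abs{B_{2m}}\geq\frac{2}{\pi}\left(1-\frac{4}{2^{2m}}\right)\abs{b_{2m}}$, and combining with $\abs{b_{2m}}\sim 2\,(2m)!/(2\pi)^{2m}$ (Proposition~\ref{pr41}~\itemref{pr412}) we get $\int_0^1\abs{B_{2m}}\sim\frac{2}{\pi}\abs{b_{2m}}\sim 8\,(2m)!/(2\pi)^{2m+1}$, which is \itemref{pr442}.

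The two total-variation computations are routine once the monotonicity intervals from Proposition~\ref{pr23} are in hand; the only genuinely delicate point is the even-index asymptotics, which require \emph{two-sided} control of $\sup_{[0,1]}\abs{B_{2n+1}}$ — the upper bound from Proposition~\ref{pr42}~\itemref{pr421} and the lower bound extracted from the value at $\tfrac14$ in Proposition~\ref{pr42}~\itemref{pr422} — combined with the variation computation through the identification $\sup_{[0,1]}\abs{B_{2m+1}}=\abs{B_{2m+1}(\alpha_m)}$.
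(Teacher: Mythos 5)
Your proof is correct and follows essentially the same route as the paper: both arguments reduce $\int_0^1\abs{B_n}$ to $\frac{1}{n+1}$ times the total variation of $B_{n+1}$ over its monotonicity subintervals (determined by Proposition~\ref{pr23}), arrive at the same exact identities $\int_0^1\abs{B_{2m}}=\frac{4}{2m+1}\abs{B_{2m+1}(\alpha_m)}=\frac{4}{2m+1}\sup_{[0,1]}\abs{B_{2m+1}}$ and $\int_0^1\abs{B_{2m-1}}=\frac{2-2^{1-2m}}{m}\abs{b_{2m}}$, and then invoke the same estimates from Propositions~\ref{pr41} and~\ref{pr42}. The only cosmetic difference is that the paper first halves the interval via the symmetry $\abs{B_n(x)}=\abs{B_n(1-x)}$, whereas you work on all of $[0,1]$ and use that symmetry only to relate the values at $\alpha_m$ and $1-\alpha_m$.
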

\bg 
\begin{proof} According to Proposition \ref{pr21} \itemref{pr212} we have $\abs{B_n(x)}=\abs{B_n(1-x)}$. Thus
\[
\int_0^1\abs{B_n(x)}=2\int_0^{1/2}\abs{B_n(x)}\,dx.
\]
So, we consider two cases:\bg
\begin{enumeratea}
\item The case $n=2m$. We have seen (Proposition \ref{pr23}) that ${x\mapsto(-1)^mB_{2m}(x)}$ is increasing on ${[0,1/2]}$ and has a unique zero $\alpha_m$
in this interval. Hence
\begin{align*}
\int_0^{1/2}\abs{B_n(x)}\,dx&=(-1)^m\left(-\int_0^{\alpha_m} B_n(x)\,dx+\int_{\alpha_m}^{1/2} B_n(x)\,dx\right)\\
&=(-1)^m\left(\left.-\frac{B_{n+1}(x)}{n+1}\right]_0^{\alpha_m}+\left.\frac{B_{n+1}(x)}{n+1}\right]_{\alpha_m}^{\frac12}\right)\\
&=2(-1)^{m+1}\frac{B_{2m+1}(\alpha_m)}{n+1}=\frac{2}{n+1}\sup_{x\in[0,1]}\abs{B_{n+1}(x)}.
\end{align*}

Thus, if $n$ is even, we have
\begin{equation*}\label{E:dag1}
\int_0^1\abs{B_n(x)}\,dx=\frac{4}{n+1}\sup_{x\in[0,1]}\abs{B_{n+1}(x)}\tag{\dag}
\end{equation*}
\item The case $n=2m+1$. Using again Proposition \ref{pr23}, we see that the function ${x\mapsto(-1)^{m+1}B_{2m+1}(x)}$ is nonnegative on $[0,1/2]$, thus
\begin{align*}
\int_0^{1/2}\abs{B_n(x)}\,dx&=(-1)^{m+1}\int_0^{1/2} B_n(x)\,dx=\left.(-1)^{m+1}\frac{B_{n+1}(x)}{n+1}\right]_0^{1/2}\\
&=\frac{(-1)^{m+1}}{n+1}\left(B_{2m+2}\left(\frac12\right)-B_{2m+2}(0)\right)\\
&=\frac{(-1)^{m+1}}{n+1}\left(\left(\frac{1}{2^{2m+1}}-1\right)b_{2m+2}-b_{2m+2}\right)\\
&=\left(2-\frac{1}{2^n}\right)\frac{\abs{b_{n+1}}}{n+1}.
\end{align*}
So, according to Corollary \ref{cor24}, if $n$ is odd, we have
\begin{equation*}\label{E:dag2}
\int_0^1\abs{B_n(x)}\,dx=\frac{4-2^{1-n}}{n+1}\sup_{x\in[0,1]}\abs{B_{n+1}(x)}\tag{\ddag}
\end{equation*}
\end{enumeratea}
Thus, combinning \eqref{E:dag1},\  \eqref{E:dag2} and Remark \ref{rm41} we obtain
\[
\int_0^1\abs{B_n(x)}\,dx\sim_{+\infty}\frac{4}{n+1}\sup_{x\in[0,1]}\abs{B_{n+1}(x)}
\sim_{+\infty}\frac{8\cdot n!}{(2\pi)^{n+1}}
\]
Also, using Propositions \ref{pr41} and \ref{pr42} we obtain
\[
\int_0^1\abs{B_n(x)}\,dx<\frac{16\cdot n!}{(2\pi)^{n+1}}
\]
which is the desired conclusion.
\end{proof}

\bg
\section{Asymptotic behavior of Bernoulli polynomials}\label{sec4bis}
\bn
\qquad Proposition~\ref{pr31} shows that, for $x\in[0,1]$, we have
\begin{align*}
\abs{(-1)^{n+1}\frac{(2\pi)^{2n}}{2\cdot(2n)!}B_{2n}(x)-\cos(2\pi x)}&\leq\sum_{k=2}^\infty\frac{1}{k^{2n}}
=\zeta(2n)-1<\frac{3}{2^{2n}}\\
\noalign{\text{and}}
\abs{(-1)^{n+1}\frac{(2\pi)^{2n+1}}{2\cdot(2n+1)!}B_{2n+1}(x)-\sin(2\pi x)}&\leq\sum_{k=2}^\infty\frac{1}{k^{2n+1}}
=\zeta(2n+1)-1<\frac{3}{2^{2n+1}}
\end{align*}
where we used the following simple inequality, valid for $m\geq2$:
\begin{align*}
\zeta(m)-1&<\frac{1}{2^m}+\int_{2}^\infty\frac{dt}{t^m}=\frac{m+1}{m-1}\frac{1}{2^m}<\frac{3}{2^m}
\end{align*}

Thus, the sequence $\left((-1)^{n+1}\frac{(2\pi)^{2n}}{2(2n)!}B_{2n}\right)_{n\geq1}$ converges uniformly
 on the interval $[0,1]$ to $\cos(2\pi\,\cdot)$, and similarly, the sequence 
$\left((-1)^{n+1}\frac{(2\pi)^{2n+1}}{2(2n+1)!}B_{2n+1}\right)_{n\geq1}$ converges uniformly
 on the interval $[0,1]$ to $\sin(2\pi\,\cdot)$.  
In fact, this conclusion is a particular case of a more general result proved by K. Dilcher \cite{dil}. 
\bg
\qquad Let us first introduce some notation.  Let $(T_n)_{n\in\nat}$  be the sequence of polynomials defined by the formula :
\[
T_n(z)=(-1)^{\floor{n/2}}\sum_{k=0}^{\floor{n/2}}(-1)^k\frac{(2\pi z)^{n-2k}}{(n-2k)!}.
\]
So that
\begin{align*}
T_{2n}(z)&=\sum_{k=0}^n(-1)^{k}\frac{(2\pi z)^{2k}}{(2k)!},\\
T_{2n+1}(z)&=\sum_{k=0}^n(-1)^{k}\frac{(2\pi z)^{2k+1}}{(2k+1)!}.
\end{align*}
With this notation we have:
\bg
\begin{proposition} \label{prDil}For every integer $n$, with $n\geq2$, and every complex number $z$ we have
\[
\abs{
        (-1)^{\floor{n/2}}     \frac{(2\pi)^n}{2\cdot n!}B_n\left(z+\frac12\right)-T_n(z)
      }
<\frac{e^{4\pi\abs{z}}}{2^n}
\]
\end{proposition}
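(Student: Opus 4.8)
The plan is to make the left-hand side completely explicit by Taylor-expanding $B_n$ about the symmetry point $\tfrac12$, to compare the resulting polynomial with the partial sum $T_n(z)$, and to absorb the difference into $e^{4\pi\abs z}/2^n$ by means of the elementary estimate $0<1-\eta(2l)<2^{-2l}$. \textit{Step 1: an exact formula.} Applying the Taylor identity \eqref{E:taylor} with $X=\tfrac12$, $Y=z$ gives $B_n\bigl(z+\tfrac12\bigr)=\sum_{k=0}^{n}\binom nk B_{n-k}\bigl(\tfrac12\bigr)z^{k}$; by Proposition~\ref{pr21}~\itemref{pr212} taken at $x=\tfrac12$ we have $B_m\bigl(\tfrac12\bigr)=0$ for every odd $m$, so only the terms with $n-k=2l$ survive. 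Reindexing by $l$ (so $0\le l\le\floor{n/2}$ and $k=n-2l$), using $B_0\bigl(\tfrac12\bigr)=1$ and the identity $\dfrac{(2\pi)^{2l}}{2\,(2l)!}B_{2l}\bigl(\tfrac12\bigr)=(-1)^l\eta(2l)$ — which is Corollary~\ref{cor31} rearranged — and simplifying with $\binom{n}{2l}(2l)!/n!=1/(n-2l)!$ and $(2\pi)^n/(2\pi)^{2l}=(2\pi)^{n-2l}$, one obtains
\[
\frac{(2\pi)^n}{2\cdot n!}\,B_n\Bigl(z+\tfrac12\Bigr)=\frac{(2\pi z)^n}{2\cdot n!}+\sum_{l=1}^{\floor{n/2}}(-1)^l\,\eta(2l)\,\frac{(2\pi z)^{n-2l}}{(n-2l)!}.
\]

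\textit{Step 2: comparison with $T_n$.} From the definition of $T_n$, $(-1)^{\floor{n/2}}T_n(z)=\sum_{k=0}^{\floor{n/2}}(-1)^k\dfrac{(2\pi z)^{n-2k}}{(n-2k)!}=\dfrac{(2\pi z)^n}{n!}+\sum_{l=1}^{\floor{n/2}}(-1)^l\dfrac{(2\pi z)^{n-2l}}{(n-2l)!}$. Subtracting this from the formula of Step~1 and writing $\eta(2l)=1-\bigl(1-\eta(2l)\bigr)$, the leading terms cancel and we get
\[
\frac{(2\pi)^n}{2\cdot n!}\,B_n\Bigl(z+\tfrac12\Bigr)-(-1)^{\floor{n/2}}T_n(z)=-\frac{(2\pi z)^n}{2\cdot n!}-\sum_{l=1}^{\floor{n/2}}(-1)^l\,\bigl(1-\eta(2l)\bigr)\,\frac{(2\pi z)^{n-2l}}{(n-2l)!}.
\]
Since $\abs{(-1)^{\floor{n/2}}}=1$, the left-hand side has the same absolute value as $(-1)^{\floor{n/2}}\tfrac{(2\pi)^n}{2\cdot n!}B_n\bigl(z+\tfrac12\bigr)-T_n(z)$, which is the quantity to be bounded.

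\textit{Step 3: the estimate.} Because $\eta(2l)=\sum_{m\ge1}(-1)^{m-1}m^{-2l}$ is an alternating series whose terms strictly decrease in modulus, $0<1-\eta(2l)<2^{-2l}$ for every $l\ge1$. Taking absolute values in Step~2, then using $1-\eta(2l)<2^{-2l}$ together with $(2\pi\abs z)^m=(4\pi\abs z)^m/2^m$ and $2^{-2l}\cdot2^{-(n-2l)}=2^{-n}$, yields
\[
\Bigl\lvert(-1)^{\floor{n/2}}\tfrac{(2\pi)^n}{2\cdot n!}B_n\bigl(z+\tfrac12\bigr)-T_n(z)\Bigr\rvert\le\frac{(2\pi\abs z)^n}{2\cdot n!}+\sum_{l=1}^{\floor{n/2}}\frac{(2\pi\abs z)^{n-2l}}{(n-2l)!\,2^{2l}}\le\frac1{2^n}\Bigl(\frac{(4\pi\abs z)^n}{n!}+\sum_{l=1}^{\floor{n/2}}\frac{(4\pi\abs z)^{n-2l}}{(n-2l)!}\Bigr).
\]
The final parenthesis is a sum of pairwise distinct terms of $\sum_{j\ge0}(4\pi\abs z)^j/j!=e^{4\pi\abs z}$, hence is $\le e^{4\pi\abs z}$; the strict inequality demanded in the statement then comes from $1-\eta(2l)<2^{-2l}$ (for $n\ge2$ there is at least the term $l=1$, and when $z=0$ and $n$ is even it is the surviving term $l=n/2$, with $1-\eta(n)<2^{-n}$, that provides strictness). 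This completes the argument.

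\textit{Expected difficulty.} I do not foresee a genuine obstacle: after this reformulation everything is bookkeeping. The one step that needs the right idea is to expand about $\tfrac12$ rather than $0$: there the odd Bernoulli values disappear (Proposition~\ref{pr21}~\itemref{pr212}) and Corollary~\ref{cor31} converts the surviving coefficients $B_{2l}(\tfrac12)$ into exactly $(-1)^l\eta(2l)$ times the matching Taylor coefficient of $\cos(2\pi z)$ or $\sin(2\pi z)$, which is precisely how $T_n$ materialises; expanding about $0$ would keep all of $b_0,\dots,b_n$ in play and obscure the cancellation. The only mildly delicate point is keeping the final inequality strict, handled as above.
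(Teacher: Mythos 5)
Your argument is correct and is essentially the paper's own proof: the same Taylor expansion of $B_n$ about $\tfrac12$, the same identification $\frac{(2\pi)^{2l}}{2\,(2l)!}B_{2l}\bigl(\tfrac12\bigr)=(-1)^l\eta(2l)$ via Corollary~\ref{cor31}, the same cancellation against $T_n$, and the same use of $0<1-\eta(2l)<2^{-2l}$ to reach $e^{4\pi\abs z}/2^n$. Your extra remark justifying strictness of the final inequality is a small improvement on the paper, whose last displayed chain ends with non-strict inequalities.
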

\begin{proof}
Note that $B_{2k+1}\left(\tfrac{1}{2}\right)=0$ for every $k\geq0$, and according to Corollary~\ref{cor31} we have
\[
B_{2k}\left(\tfrac{1}{2}\right)=(-1)^k\frac{2\cdot(2k)!}{(2\pi)^{2k}}\,\eta(2k),\quad\text{for $k\geq1$}
\] 
where $\eta(2k)=\sum_{m=1}^\infty(-1)^{m-1}/m^{2k}$.
Thus, using Taylor's expansion we have
\begin{align*}
B_n\left(z+\frac12\right)&=z^n+\sum_{k=1}^n\com{n}{k} B_k\left(\frac12\right)z^{n-k}\\
&=z^n+\sum_{1\leq k\leq n/2}\com{n}{2k} B_{2k}\left(\frac12\right) z^{n-2k}\\
&=z^n+\frac{2\cdot n!}{(2\pi)^n}\sum_{1\leq k\leq n/2}  (-1)^k\eta(2k)\frac{ (2\pi z)^{n-2k}}{(n-2k)!}
\end{align*}
Thus
\[
\frac{(2\pi)^n}{2\cdot n!}B_n\left(z+\frac12\right)-(-1)^{\floor{n/2}}T_n(z)=-\frac{(2\pi z)^n}{2\cdot n!}
+\sum_{1\leq k\leq n/2}  (-1)^k(\eta(2k)-1)\frac{ (2\pi z)^{n-2k}}{(n-2k)!}
\]
and consequently, since $0<1-\eta(2k)<2^{-2k}$, we get
\begin{align*}
\abs{(-1)^{\floor{n/2}}\frac{(2\pi)^n}{2\cdot n!}B_n\left(z+\frac12\right)-T_n(z)}
&\leq \frac{(2\pi \abs{z})^n}{2\cdot n!}+\sum_{1\leq k\leq n/2}  (1-\eta(2k))\frac{ (2\pi \abs{z})^{n-2k}}{(n-2k)!}\\
&\leq\frac{1}{2^n}\left( \frac{(4\pi \abs{z})^n}{2\cdot n!}+\sum_{1\leq k\leq n/2}   \frac{ (4\pi \abs{z})^{n-2k}}{(n-2k)!}\right)\\
&\leq\frac{1}{2^n}\left( \sum_{0\leq k\leq n/2}   \frac{ (4\pi \abs{z})^{n-2k}}{(n-2k)!}\right)\leq\frac{e^{4\pi\abs{z}}}{2^n}
\end{align*}
and the desired inequality follows.
\end{proof}
\qquad Clearly, the sequences of polynomial functions $(T_{2n})_{n\in\nat}$ and $(T_{2n+1})_{n\in\nat}$ 
converge uniformly on every compact subset of $\comp$ to $z\mapsto\cos(2\pi z)$ and $z\mapsto\sin(2\pi z)$ respectively. So,
the next corollary is obtained on replacing $z$ by $z-1/2$ in Proposition \ref{prDil}.
\bg
\begin{corollary}\label{cordil}The following two properties hold:
\begin{enumeratei}
\item The sequence $\left((-1)^{n+1}\frac{(2\pi)^{2n}}{2(2n)!}B_{2n}\right)_{n\geq1}$ converges uniformly
 on every compact subset of $\comp$ to the function $\cos(2\pi\,\cdot)$.
\item The sequence 
$\left((-1)^{n+1}\frac{(2\pi)^{2n+1}}{2(2n+1)!}B_{2n+1}\right)_{n\geq1}$ converges uniformly
on every compact subset of $\comp$  to  the function  $\sin(2\pi\,\cdot)$.  
\end{enumeratei}
\end{corollary}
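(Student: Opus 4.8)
\emph{Proof sketch.} The plan is to read off both statements from Proposition~\ref{prDil} by the substitution $z\mapsto z-\frac12$, together with the elementary fact that the partial sums of the exponential (hence sine and cosine) power series converge uniformly on bounded subsets of $\comp$.

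First I would fix a compact set $K\subset\comp$ and choose $R>0$ with $\abs z\le R$ for all $z\in K$. Replacing $z$ by $z-\frac12$ in Proposition~\ref{prDil} and bounding $\abs{z-\frac12}\le R+\frac12$ yields, for every $n\ge2$ and every $z\in K$,
\[
\abs{(-1)^{\floor{n/2}}\,\frac{(2\pi)^n}{2\cdot n!}\,B_n(z)-T_n\!\left(z-\frac12\right)}<\frac{e^{4\pi(R+1/2)}}{2^n}=\frac{e^{2\pi}\,e^{4\pi R}}{2^n},
\]
and the right-hand side tends to $0$ uniformly in $z\in K$. Hence it remains only to identify the uniform limits on $K$ of $z\mapsto T_{2n}\!\left(z-\frac12\right)$ and $z\mapsto T_{2n+1}\!\left(z-\frac12\right)$. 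Since $T_{2n}$ and $T_{2n+1}$ are exactly the Taylor polynomials at $0$, of degrees $2n$ and $2n+1$, of $w\mapsto\cos(2\pi w)$ and $w\mapsto\sin(2\pi w)$, a crude tail estimate for the exponential series gives $T_{2n}(w)\to\cos(2\pi w)$ and $T_{2n+1}(w)\to\sin(2\pi w)$ uniformly on $\{\,\abs w\le R+\frac12\,\}$. Setting $w=z-\frac12$ and using $\cos(2\pi z-\pi)=-\cos(2\pi z)$ and $\sin(2\pi z-\pi)=-\sin(2\pi z)$, I obtain $T_{2n}\!\left(z-\frac12\right)\to-\cos(2\pi z)$ and $T_{2n+1}\!\left(z-\frac12\right)\to-\sin(2\pi z)$ uniformly on $K$.

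Combining these facts, I would then track the sign $(-1)^{\floor{n/2}}$. For $n=2m$ one has $\floor{n/2}=m$, hence $(-1)^{m}\frac{(2\pi)^{2m}}{2\,(2m)!}B_{2m}(z)\to-\cos(2\pi z)$, that is $(-1)^{m+1}\frac{(2\pi)^{2m}}{2\,(2m)!}B_{2m}(z)\to\cos(2\pi z)$ uniformly on $K$, which is statement~$(i)$; for $n=2m+1$ one again has $\floor{n/2}=m$, hence $(-1)^{m}\frac{(2\pi)^{2m+1}}{2\,(2m+1)!}B_{2m+1}(z)\to-\sin(2\pi z)$, that is $(-1)^{m+1}\frac{(2\pi)^{2m+1}}{2\,(2m+1)!}B_{2m+1}(z)\to\sin(2\pi z)$ uniformly on $K$, which is statement~$(ii)$. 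The one point that must be handled carefully is precisely this sign bookkeeping: since $\floor{n/2}$ takes the same value at $2m$ and at $2m+1$, the factor $(-1)^{n+1}$ appearing in the statement has to be matched exactly against the extra $-1$ produced by the half-integer shift in the argument of $T_n$. Everything else is an immediate consequence of Proposition~\ref{prDil} and of the uniform convergence of partial sums of power series on bounded sets.
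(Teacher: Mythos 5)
Your proof is correct and follows essentially the same route as the paper: substitute $z-\tfrac12$ into Proposition~\ref{prDil}, use the uniform convergence of the Taylor polynomials $T_n$ to $\cos(2\pi\,\cdot)$ and $\sin(2\pi\,\cdot)$ on bounded sets, and absorb the sign flip coming from the half-period shift. Your explicit tracking of $(-1)^{\floor{n/2}}$ against the extra $-1$ from $\cos(2\pi z-\pi)=-\cos(2\pi z)$ is exactly the bookkeeping the paper leaves implicit, and it checks out.
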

\bg 
\section{The generating function of Bernoulli polynomials}\label{sec5} 
\bn
\qquad In what follows, we will write $D(a,r)$ to denote the open disk of center $a$ and radius $r$ in the complex plane $\comp$:
\[
D(a,r)=\big\{z\in\comp:\abs{z-a}<r\big\}.
\]
\qquad The next result gives the generating function of the sequence of Bernoulli polynomials. 
\bg
\begin{proposition}\label{pr51}
For every $(z,w)\in\comp\times D(0,2\pi)$, the series $\ds\sum_{n=0}^\infty\frac{B_n(z)}{n!}w^n$ is convergent and
\[
\forall\, (z,w)\in\comp\times D(0,2\pi),\qquad\sum_{n=0}^\infty\frac{B_n(z)}{n!}w^n=\frac{we^{zw}}{e^w-1}
\]
\end{proposition}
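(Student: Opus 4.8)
The plan is to bound $B_n$ well enough to make the series converge and then to identify its sum by turning the three characterising conditions of the Bernoulli polynomials into a first-order differential equation in $z$.

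\textbf{Convergence.} Combining the expansion $B_n(X)=\sum_{k=0}^{n}\binom{n}{k}b_{n-k}X^{k}$ of Proposition~\ref{pr22}~\itemref{pr223} with the bound $\abs{b_m}\leq 4\,m!/(2\pi)^m$, valid for every $m\geq0$ (it holds by inspection for $m=0,1$, trivially for odd $m\geq3$ since then $b_m=0$, and it is contained in Proposition~\ref{pr41}~\itemref{pr411} for even $m\geq2$), I would deduce
\[
\frac{\abs{B_n(z)}}{n!}\leq\sum_{k=0}^{n}\frac{\abs{b_{n-k}}}{(n-k)!}\cdot\frac{\abs{z}^{k}}{k!}
\leq\frac{4}{(2\pi)^{n}}\sum_{k=0}^{\infty}\frac{(2\pi\abs{z})^{k}}{k!}=\frac{4\,e^{2\pi\abs{z}}}{(2\pi)^{n}}.
\]
Hence $\sum_{n\geq0}\frac{\abs{B_n(z)}}{n!}\abs{w}^{n}\leq 4e^{2\pi\abs{z}}\sum_{n\geq0}\bigl(\abs{w}/2\pi\bigr)^{n}<\infty$ for $\abs{w}<2\pi$, and the convergence is uniform on every product $\{\abs{z}\leq R\}\times\{\abs{w}\leq\rho\}$ with $\rho<2\pi$. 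Denote the sum by $F(z,w)$; by Weierstrass's theorem $F$ is entire in $z$ for each fixed $w\in D(0,2\pi)$ and analytic in $w$ on $D(0,2\pi)$ for each fixed $z$.

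\textbf{Identification of the sum.} Differentiating the series term by term in $z$ (permissible by the local uniform convergence just established, since the differentiated series is again of this type) and using $B_n'=nB_{n-1}$ from Proposition~\ref{pr21}~\itemref{pr211}, one gets $\tfrac{\partial F}{\partial z}(z,w)=\sum_{n\geq1}\frac{B_{n-1}(z)}{(n-1)!}w^{n}=w\,F(z,w)$. Thus, for fixed $w$, the function $z\mapsto F(z,w)e^{-zw}$ has zero derivative, so $F(z,w)=C(w)e^{zw}$ for some $C(w)$ not depending on $z$. Integrating over $z\in[0,1]$ (interchanging sum and integral is again justified by the uniform convergence) and invoking Corollary~\ref{cor11}, $\int_0^1F(z,w)\,dz=\sum_{n\geq0}\frac{w^{n}}{n!}\int_0^1B_n(t)\,dt=1$; on the other hand $\int_0^1C(w)e^{zw}\,dz=C(w)\,(e^{w}-1)/w$ when $w\neq0$. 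Therefore $C(w)=w/(e^{w}-1)$ and $F(z,w)=we^{zw}/(e^{w}-1)$ for all $(z,w)\in\comp\times\bigl(D(0,2\pi)\setminus\{0\}\bigr)$. Since $w=0$ is the only zero of $e^{w}-1$ in $D(0,2\pi)$ and it is simple, $w\mapsto we^{zw}/(e^{w}-1)$ continues analytically across $w=0$ with value $1$ there, exactly as $F$ does (note $F(z,0)=B_0(z)=1$); two analytic functions of $w$ coinciding on $D(0,2\pi)\setminus\{0\}$ coincide on $D(0,2\pi)$, and the proof is complete.

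\textbf{Main obstacle.} The only point requiring real care is the growth estimate of the first step, which is precisely what makes the term-by-term differentiation and integration legitimate; everything afterwards is a two-line computation exploiting conditions \circled{1}, \circled{2}--via $B_n'=nB_{n-1}$--and \circled{3}. As an alternative to that estimate one may instead quote Proposition~\ref{prDil} (or Corollary~\ref{cordil}) after the substitution $z\mapsto z-\frac12$, which delivers the same $O\bigl((2\pi)^{-n}\bigr)$ bound on $\abs{B_n(z)}/n!$.
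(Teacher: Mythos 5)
Your proof is correct and follows essentially the same route as the paper: the same bound $\abs{B_n(z)}\leq 4\,\frac{n!}{(2\pi)^n}e^{2\pi\abs{z}}$ via Proposition~\ref{pr22}~\itemref{pr223} and Proposition~\ref{pr41}, then the differential equation $\partial F/\partial z=wF$ and the normalization $\int_0^1F(t,w)\,dt=1$ to identify $F(z,w)=we^{zw}/(e^w-1)$. Your explicit treatment of the removable singularity at $w=0$ is a small extra care the paper leaves implicit, but not a different argument.
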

\begin{proof}
Using Proposition \ref{pr41} \itemref{pr411}, and the facts that $b_0=1$, $b_1=-1/2$ we see that
\[
\forall\,n\in\nat,\qquad \abs{b_n}\leq 4\,\frac{n!}{(2\pi)^n}.
\]
So, using Proposition \ref{pr22} \itemref{pr223}, we see that  for every nonnegative integer $n$ and complex number $z$ we have:
\begin{align*}
\abs{B_n(z)}&\leq\sum_{k=0}^n\com{n}{k}\abs{b_{n-k}}\abs{z}^k\\
&\leq4\sum_{k=0}^n\com{n}{k}\frac{(n-k)!}{(2\pi)^{n-k}}\abs{z}^k\\
&=4\,\frac{n!}{(2\pi)^{n}}\sum_{k=0}^n\frac{(2\pi\abs{z})^k}{k!}\leq4\,\frac{n!}{(2\pi)^{n}}e^{2\pi\abs{z}}.
\end{align*}
Hence, for every $(z,w)\in\comp\times D(0,2\pi)$ and every nonnegative integer $n$ we have
\[
\frac{\abs{B_n(z)}}{n!}\abs{w}^n\leq 4e^{2\pi\abs{z}}\left(\frac{\abs{w}}{2\pi}\right)^n
\]
This implies the convergence of the series $\sum_{n=0}^\infty\frac{B_n(z)}{n!}w^n$. Therefore, we can define
\[
F:\comp\times D(0,2\pi)\vers\comp, F(z,w)=\sum_{n=0}^\infty\frac{B_n(z)}{n!}w^n
\]
Moreover, for $w\in D(0,2\pi)$, the normal convergence of the series $\sum_{n=0}^\infty\frac{B_n(\cdot)}{n!}w^n$ on
every compact subset of $\comp$, implies, using Proposition \ref{pr21} \itemref{pr211}, the normal convergence
 of the series $\sum_{n=0}^\infty\frac{B^\prime_n(\cdot)}{n!}w^n$ on
every compact subset of $\comp$. So, the function $F(\cdot,w)$ has a derivative on $\comp$ and
\begin{align*}
\frac{\partial F}{\partial z}(z,w)&=\sum_{n=0}^\infty\frac{B^\prime_n(z)}{n!}w^n\\
&=\sum_{n=1}^\infty\frac{nB_{n-1}(z)}{n!}w^n=wF(z,w)
\end{align*}
Thus, there exists a function $f:D(0,2\pi)\vers\comp$ such that $F(z,w)=e^{zw}f(w)$ for every $(z,w)$ in 
${\comp\times D(0,2\pi)}$.
Now, since the series $\sum_{n=0}^\infty\frac{B_n(\cdot)}{n!}w^n$ is normally convergent on the compact set $[0,1]$, and using
Corollary \ref{cor11}, we obtain
\[
\int_0^1F(t,w)\,dt=\sum_{n=0}^\infty\frac{w^n}{n!}\int_0^1B_n(t)\,dt=1
\]
But, on the other hand, we have
\[
\int_0^1F(t,w)\,dt=f(w)\int_0^1e^{tw}\,dt=f(w)\frac{e^w-1}{w}
\]
Hence, $f(w)=w/(e^w-1)$, and consequently $F(z,w)=we^{zw}/(e^w-1)$, which is the desired conclusion.
\end{proof}
\bg
\qquad The above result allows us to find the power series expansion of some well-known functions.
\bg
\begin{proposition}\label{pr52}
The functions $z\mapsto z\cot z$, $z\mapsto \tan z$ and  $z\mapsto z/\sin z$ have the following power series expansions in the
neighbourhood of zero:
\begin{enumeratei}
\item\label{pr521} $\ds\forall\, z\in D(0,\pi),\phantom{\left(0,\frac{\pi}{2}\right)} z\cot z=\sum_{n=0}^\infty\frac{2^{2n}(-1)^nb_{2n}}{(2n)!}z^{2n}$.
\item\label{pr522} $\ds\forall\, z\in D\left(0,\frac{\pi}{2}\right),
\phantom{(0,\pi)} \tan z=\sum_{n=1}^\infty\frac{2^{2n}(2^{2n}-1)(-1)^{n+1}b_{2n}}{(2n)!}z^{2n-1}$.
\item\label{pr523} $\ds\forall\, z\in D(0,\pi),\phantom{\left(0,\frac{\pi}{2}\right)\,\,\,} \frac{z}{\sin z}=\sum_{n=0}^\infty\frac{(2^{2n}-2)(-1)^{n+1}b_{2n}}{(2n)!}z^{2n}$.
\end{enumeratei}
\end{proposition}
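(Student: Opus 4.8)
The plan is to derive all three expansions from the generating function of Proposition~\ref{pr51} by combining the substitution $w=2iz$ with two elementary trigonometric identities.

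First I would specialize Proposition~\ref{pr51} at $z=0$, which gives $\sum_{n=0}^\infty\frac{b_n}{n!}w^n=\dfrac{w}{e^w-1}$ for every $w\in D(0,2\pi)$. Since $b_1=-\tfrac12$ (Remark~\ref{rm4}) and $b_{2k+1}=0$ for $k\geq1$ (Proposition~\ref{pr22}~\itemref{pr221}), adding $\tfrac w2$ kills every odd power and leaves
\[
\sum_{n=0}^\infty\frac{b_{2n}}{(2n)!}\,w^{2n}=\frac{w}{e^w-1}+\frac w2=\frac w2\cdot\frac{e^w+1}{e^w-1}=\frac w2\coth\frac w2,\qquad |w|<2\pi .
\]
Now substituting $w=2iz$ turns the right-hand side into $iz\coth(iz)=iz\cdot\dfrac{\cos z}{i\sin z}=z\cot z$, turns $w^{2n}$ into $2^{2n}(-1)^n z^{2n}$, and turns the condition $|w|<2\pi$ into $|z|<\pi$; this is exactly \itemref{pr521}.

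For \itemref{pr522} I would start from the identity $\tan z=\cot z-2\cot(2z)$, which follows from $1-\cos(2z)=2\sin^2 z$. Multiplying by $z$ gives $z\tan z=z\cot z-(2z)\cot(2z)$, and inserting \itemref{pr521} once with argument $z$ and once with argument $2z$ (the latter valid as long as $|2z|<\pi$) and subtracting the two series termwise yields
\[
z\tan z=\sum_{n=0}^\infty\frac{2^{2n}(-1)^n b_{2n}}{(2n)!}\bigl(1-2^{2n}\bigr)z^{2n},
\]
in which the $n=0$ term vanishes; dividing by $z$ gives \itemref{pr522}, valid on $D(0,\pi/2)$. Similarly, for \itemref{pr523} I would use $\dfrac{1}{\sin z}=\cot\dfrac z2-\cot z$ (which follows from $\sin\bigl(z-\tfrac z2\bigr)=\sin\tfrac z2$), so that $\dfrac{z}{\sin z}=2\cdot\dfrac z2\cot\dfrac z2-z\cot z$. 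Feeding \itemref{pr521} with argument $z/2$ (valid for $|z|<2\pi$) and with argument $z$ (valid for $|z|<\pi$) and combining termwise gives
\[
\frac{z}{\sin z}=\sum_{n=0}^\infty\frac{(2-2^{2n})(-1)^n b_{2n}}{(2n)!}\,z^{2n}=\sum_{n=0}^\infty\frac{(2^{2n}-2)(-1)^{n+1} b_{2n}}{(2n)!}\,z^{2n},
\]
valid on $D(0,\pi)$, which is \itemref{pr523}.

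The manipulations are all routine; the only things demanding attention are verifying the two trigonometric identities and handling the factors of $i$ in $w=2iz$ correctly, together with recording in each part the correct radius of convergence — namely the smallest among the radii of the instances of \itemref{pr521} that are being combined, which in each case coincides with the distance from $0$ to the nearest pole of the function being expanded. The normal convergence of the series on these disks, provided by Proposition~\ref{pr51}, is what legitimizes all of the termwise operations.
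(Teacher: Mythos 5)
Your proposal is correct and follows essentially the same route as the paper: specialize the generating function of Proposition~\ref{pr51} at $z=0$, use $b_1=-\tfrac12$ and the vanishing of the higher odd Bernoulli numbers to isolate the even part, substitute $w=2iz$ to obtain \itemref{pr521}, and then deduce \itemref{pr522} and \itemref{pr523} from the identities $\tan z=\cot z-2\cot(2z)$ and $\csc z=\cot(z/2)-\cot z$. Your write-up merely spells out the termwise combinations and the resulting radii of convergence, which the paper leaves implicit.
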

\begin{proof} Indeed, choosing $z=0$ in Proposition \ref{pr51} and using Proposition \ref{pr22} \itemref{pr221} we obtain
\[
\forall\, w\in D(0,2\pi),\qquad-\frac{1}{2}w+\sum_{n=0}^\infty\frac{b_{2n}}{(2n)!}w^{2n}=\frac{w}{e^w-1}.
\]
Thus
\[
\forall\, w\in D(0,2\pi),\qquad\sum_{n=0}^\infty\frac{2b_{2n}}{(2n)!}w^{2n}=w\left(\frac{e^w+1}{e^w-1}\right).
\]
Substituting $w=2iz$ we obtain
\[
\forall\, z\in D(0,\pi),\qquad\sum_{n=0}^\infty\frac{2^{2n}(-1)^nb_{2n}}{(2n)!}z^{2n}
=iz\left(\frac{e^{2iz}+1}{e^{2iz}-1}\right)=z\cot z.
\]
This proves \itemref{pr521}.

On the other hand. Noting that
\begin{align*}
\tan z&=\cot z-2\cot(2z)\\
\frac{1}{\sin z}&=\cot\left(\frac{z}{2}\right)-\cot z 
\end{align*}
we obtain \itemref{pr522} and \itemref{pr523}.
\end{proof}
 
\begin{remark}\label{rm51}
Recalling Corollary \ref{cor31} we see that for $z\in D(0,1)$ we have
\begin{equation}\label{E:cots}
\pi z\cot(\pi z)=1-2\sum_{n=1}^\infty\zeta(2n)z^{2n}
\end{equation}
Interchanging the signs of summation we find that
\begin{align*}
\pi z\cot(\pi z)&=1-2\sum_{n=1}^\infty\sum_{k=1}^\infty\left(\frac{z}{k}\right)^{2n}
=1-2\sum_{k=1}^\infty\frac{z^2}{k^2-z^2}\\
&=1+z\sum_{k=1}^\infty\left(\frac{1}{z-k}+\frac{1}{z+k}\right)
\end{align*}
This yields the following simple fraction expansion of the cotangent function:
\begin{equation}\label{E:cotf}
\pi\cot(\pi z)=\frac{1}{z}+\sum_{k=1}^\infty\left(\frac{1}{z-k}+\frac{1}{z+k}\right)
=\lim_{n\to\infty}\sum_{k=-n}^{n}\frac{1}{z-k}
\end{equation}
Note that we have proved this for $z\in D(0,1)$ but the result is valid for every $z\in\comp\setminus\ent$ using analytic continuation
 \cite[Chap.~8, \S~1]{ahl}. Similarly,
for $z\in D(0,1)$ we have
\begin{equation}\label{E:secs}
\frac{\pi z}{\sin(\pi z)}=1-2\sum_{n=1}^\infty\eta(2n)z^{2n}
\end{equation}
Interchanging the signs of summation we find that
\begin{align*}
\frac{\pi z}{\sin(\pi z)}&=1-2\sum_{n=1}^\infty\sum_{k=1}^\infty(-1)^{k-1}\left(\frac{z}{k}\right)^{2n}
=1-2\sum_{k=1}^\infty(-1)^{k-1}\frac{z^2}{k^2-z^2}\\
&=1+z\sum_{k=1}^\infty(-1)^{k}\left(\frac{1}{z-k}+\frac{1}{z+k}\right)
\end{align*}
This yields the following simple fraction expansion of the cosecant function:
\begin{equation}\label{E:secf}
\frac{\pi }{\sin(\pi z)}=\frac{1}{z}+\sum_{k=1}^\infty(-1)^{k}\left(\frac{1}{z-k}+\frac{1}{z+k}\right)
=\lim_{n\to\infty}\sum_{k=-n}^n\frac{(-1)^k}{z-k}
\end{equation}
This is also valid for every $z\in\comp\setminus\ent$ using analytic continuation.
\end{remark}
\bg
\begin{application} Using the power series expansion of $z\mapsto \tan z$ obtained in the previous result, we see that for
every positive integer $n$ we have
\[
\tan^{(2n-1)}(0)=\frac{2^{2n}(2^{2n}-1)}{2n}(-1)^{n+1}b_{2n}
\]
So, let us define $a_n=\tan^{(n)}(0)$. We note that $a_{2n}=0$ for every $n\geq0$ since ``$\tan$'' is an odd function. 
If we use $\tan'=1+\tan^2$  and the Leibniz formula, we obtain,
\[
\tan^{(2n+1)}(z)=\left(1+\tan^2z\right)^{(2n)}=\sum_{k=0}^{2n}\com{2n}{k}\tan^{(k)}(z)\tan^{(2n-k)}(z)
\]
for every positive integer $n$. Thus
\[
\forall\,n\geq1,\qquad a_{2n+1}=\sum_{k=0}^{n-1}\com{2n}{2k+1}a_{2k+1}a_{2(n-k)-1}
\]
But, $a_1=1$ and the above formula shows inductively that $a_{2n+1}$ is an \textit{integer} for every $n$. This proves that
\[
\forall\,n\geq1,\qquad \frac{2^{2n}(2^{2n}-1)}{2n}b_{2n}\in\ent
\]
and considering the separate case of $b_1$ we see that
\begin{equation*}\label{ap3}
\forall\,n\geq1,\qquad \frac{2^{n}(2^{n}-1)}{n}b_{n}\in\ent.
\end{equation*}
This result is to be compared with Corollary~\ref{von5}.
\end{application}
\bg
\begin{application} Let $f(z)=z\cot z$. Since $f(z)-zf'(z)=z^2+f^2(z)$ we conclude from Proposition \ref{pr52} \itemref{pr521} that
\[
\sum_{n=0}^\infty\frac{2^{2n}(1-2n)(-1)^nb_{2n}}{(2n)!}z^{2n}=z^2+\sum_{n=0}^\infty\frac{2^{2n}(-1)^n}{(2n)!}
\left(\sum_{k=0}^{n}\com{2n}{2k}b_{2k}b_{2n-2k}\right)\,z^{2n}
\]
Comparing the coefficients of $z^{2n}$ we see that the sequence $(b_{2n})_{n\geq1}$ can be defined recursively by the formula
\begin{equation*}\label{ap4}
b_2=\frac16,\qquad\forall\,n\geq2,\qquad b_{2n}=-\frac{1}{2n+1}\sum_{k=1}^{n-1}\com{2n}{2k}b_{2k}b_{2n-2k}.
\end{equation*}
\end{application}

\begin{application} \textbf{A multiplication formula for Bernoulli polynomials.} Consider an integer
$q$, with $q\geq2$, clearly we have
\begin{align*}
\frac{e^{qw}-1}{e^w-1}&=\sum_{k=0}^{q-1}e^{kw}=1+\sum_{n=0}^\infty\left(\sum_{k=1}^{q-1}k^n\right)\frac{w^n}{n!},\\
&=1+\sum_{n=0}^\infty S_n(q-1)\frac{w^n}{n!}=1+
\sum_{n=0}^\infty\frac{B_{n+1}(q)-B_{n+1}(0)}{n+1}\cdot\frac{w^n}{n!},\\
&=q+\sum_{n=1}^\infty \frac{B_{n+1}(q)-b_{n+1}}{n+1}\cdot\frac{w^{n}}{n!}.
\end{align*}
Where we used the notation of Application \ref{Snapp}. Noting the identity
\[
q\cdot\frac{we^{(qz)w}}{e^w-1}=  \frac{(qw)e^{z(qw)}}{e^{qw}-1}\cdot\frac{e^{qw}-1}{e^w-1}
\] we conclude that
\begin{align*}
\sum_{n=0}^\infty qB_n(qz)\frac{w^n}{n!}&=\left(\sum_{n=0}^\infty q^nB_n(z)\frac{w^n}{n!}
\right)\left(q+\sum_{n=1}^\infty \frac{B_{n+1}(q)-b_{n+1}}{n+1}\cdot\frac{w^{n}}{n!}\right)\\
&=\sum_{n=0}^\infty G_n(q,z)\frac{w^n}{n !}
\end{align*} 
where 
\[
G_n(q,z)=q^{n+1}B_n(z)+\frac{1}{n+1}\sum_{j=0}^{n-1}q^{j}\com{n+1}{j}B_{j}(z)
(B_{n+1-j}(q)-b_{n+1-j}).
\]
But, because a power series expansion is unique, we have $qB_n(qz)=G_n(q,z)$ for every $n$. Now, fix $z$ in $\comp$ and consider
the polynomial
\[
Q(X)= B_n(zX)-\left(X^{n}B_n(z)+\frac{1}{n+1}\sum_{j=0}^{n-1}\com{n+1}{j}B_{j}(z)
(B_{n+1-j}(X)-b_{n+1-j})X^{j-1}\right)
\] 
(Note that $X|(B_{n+1-j}(X)-b_{n+1-j})$.) Clearly $\deg Q\leq n$, and $Q$  has infinitely many zeros, (namely, every integer $q$ greater than $1$.) 
Thus $Q(X)=0$ and we have proved the following ``Multiplication Formula'', valid for every complex numbers $z$ and $w$:
\begin{equation}\label{mul}
B_n(zw)=w^{n}B_n(z)+\frac{1}{n+1}\sum_{j=0}^{n-1}\com{n+1}{j}B_{j}(z)w^{j-1}(B_{n+1-j}(w)-b_{n+1-j})
\end{equation}
For example, taking $w=2$ and $z=0$ we obtain, the following recurrence 
\[
b_n=\frac{1}{2(1-2^n)} \sum_{j=0}^{n-1}2^{j}\com{n}{j}b_{j} 
\]
since $B_{n+1-j}(2)-B_{n+1-j}(0)=n+1-j$ for $0\leq j<n$ according to Corollary~\ref{cor11}. This recurrence was obtained in \cite{nam},
and was generalized in \cite{dee}. All these generalizations follow from \eqref{mul}.
\end{application}
\bg 
\begin{application} \textbf{More formul\ae~ for Bernoulli numbers.} The function
$w\mapsto \frac{e^w-1}{w}$ is entire, and has a power series expansion, that converges in the whole complex plane.
Let $\rho$ be defined by
\[\frac{1}{\rho}=\sup_{w\in D(0,1)}\abs{\frac{e^w-1}{w}}>1.
\]
 Now, for the disk $w\in D(0,\rho)$ we have $\abs{e^w-1}<1$ and consequently
\[
w=\Log(1-(1-e^w))=-\sum_{n=1}^\infty\frac{1}{n}(1-e^w)^n.
\]
Thus, for every $m\geq 1$ we have
\[
\frac{w}{e^w-1}=\sum_{n=0}^{m}\frac{(1-e^w)^{n}}{n+1} +g_m(w)
\]
where $g_m(w)=\sum_{n=m+1}^\infty\frac{1}{n+1}(1-e^w)^n$. Clearly, $w=0$ is a zero
of $g_m$ of order  greater than $m$. Thus $ g_m^{(m)}(0)=0$. But, using Proposition \ref{pr51} the Bernoulli number $b_m$ is the 
$m^{\text{th}} $ derivative of $w\mapsto \frac{w}{e^w-1}$ at $0$ so
\[
b_m=\sum_{n=0}^m\frac{1}{n+1}\left.\big((1-e^w)^{n}\big)^{(m)}\right]_{w=0}
\]
But  $(1-e^w)^{n}=\sum_{k=0}^n\com{n}{k}(-1)^{k}e^{kw}$. Thus,
\begin{equation}\label{gould}
b_m=\sum_{n=0}^m\frac{1}{n+1}\left(\sum_{k=0}^n\com{n}{k}(-1)^{k}k^m\right),\quad\text{for $m\geq1$.}
\end{equation}
This is  quite an old formula for Bernoulli numbers (see \cite{gld} and the references therein.) 
Noting  that
\begin{align*}
\sum_{n=1}^m\frac{(1-x)^n}{n}&=\int_0^{1-x}\left(\sum_{n=1}^mt^{n-1}\right)\,dt
=\int_0^{1-x}\frac{t^m-1}{t-1}\,dt\\
&=\int_{1}^{x}\frac{(1-u)^m-1}{u}\,du=\int_1^{x}\left(\sum_{n=1}^m\com{m}{n}(-1)^nu^{n-1}\right)\,du\\
&=\sum_{n=1}^m\com{m}{n}(-1)^n\frac{x^{n}-1}{n},
\end{align*}
we can rearrange our previous calculation, as follows
\[
\sum_{n=1}^m\frac{(1-e^w)^{n-1}}{n} =\sum_{n=1}^m\com{m}{n}\frac{(-1)^n}{n}\frac{e^{nw}-1}{e^w-1} 
 =\sum_{n=1}^m\com{m}{n}\frac{(-1)^n}{n}\left(\sum_{k=0}^{n-1}e^{kw}\right).
\]
Hence, taking  as before  the $m^{\text{th}} $ derivative at $0$, another formula is obtained \cite{ber}:
\begin{equation}
b_m=\sum_{n=1}^m\com{m}{n}\frac{(-1)^n}{n}\left(\sum_{k=1}^{n-1}k^m\right),\quad\text{for $m\geq1$.}
\end{equation}
\end{application}
\bg
\section{The von Staudt-Clausen theorem}\label{sec6bis}
\bn
\qquad In this section we give the proof of a famous theorem that determines the fractional part of a Bernoulli number. First, let us
introduce some notation, the reader is invited to take a look at \cite[Chapter~15]{Ire}, and the references therein, for
a deeper insight on the role played by Bernoulli numbers in Number Theory.

\qquad Let us denote by $\mathfrak{A}$ the set of functions $f$ that are analytic in the neighborhood
of $0$ and such that $f^{(n)}(0)$ is an integer for every nonnegative integer $n$. Let $f$ and $g$  be two members of $\mathfrak{A}$,
and let $m$ be a positive integer.  
We will write $f\equiv g\pmod{m}$  if $f^{(n)}(0)\equiv g^{(n)}(0)\pmod{m}$ for every nonnegative integer $n$. Finally,
for two functions $f$ and $g$ that are analytic functions in the  neighborhood
of $0$, we write $f\equiv g\pmod{\mathfrak{A}}$ if $f-g\in\mathfrak{A}$.
\bg 
\begin{lemma}\label{von1} The following properties hold:
\begin{enumeratei}
\item \label{von11}If $f$ belongs to $\mathfrak{A}$, then both $f'$ and $z\mapsto\int_0^zf(t)\,dt$ belong to  $\mathfrak{A}$.
\item \label{von12}If $f$ and $g$ belong to  $\mathfrak{A}$, then $fg$ belongs also to  $\mathfrak{A}$.
\item \label{von13}If $f$ belongs to $\mathfrak{A}$ and $f(0)=0$, then $\frac{1}{m!}f^m$ belongs also to  $\mathfrak{A}$ for every
positive integer $m$.
\end{enumeratei}
\end{lemma}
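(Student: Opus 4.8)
The plan is to verify each of the three claims directly from the definition of $\mathfrak{A}$, exploiting the fact that power series coefficients of a function analytic near $0$ are given by $f^{(n)}(0)/n!$, so that membership in $\mathfrak{A}$ is equivalent to saying the Taylor coefficients of $f$ at $0$ are of the form (integer)$/n!$. Concretely, for \itemref{von11}: if $f(z)=\sum_{n\ge0}\frac{a_n}{n!}z^n$ with all $a_n\in\ent$, then $f'(z)=\sum_{n\ge0}\frac{a_{n+1}}{n!}z^n$, so $(f')^{(n)}(0)=a_{n+1}\in\ent$; and $\int_0^zf(t)\,dt=\sum_{n\ge0}\frac{a_n}{(n+1)!}z^{n+1}$, whose $n$-th derivative at $0$ is $a_{n-1}\in\ent$ for $n\ge1$ and $0$ for $n=0$. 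Both manipulations are justified because power series can be differentiated and integrated termwise inside their disk of convergence.

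For \itemref{von12}, I would use the Leibniz rule for the $n$-th derivative of a product: $(fg)^{(n)}(0)=\sum_{k=0}^n\binom{n}{k}f^{(k)}(0)g^{(n-k)}(0)$. Since each $f^{(k)}(0)$ and $g^{(n-k)}(0)$ is an integer and the binomial coefficients are integers, the right-hand side is an integer, so $fg\in\mathfrak{A}$. (Analyticity of $fg$ near $0$ is automatic.)

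For \itemref{von13}, which I expect to be the only part requiring a genuine idea, the key observation is that if $f(0)=0$ then $f(z)=zh(z)$ where $h(z)=\int_0^1f'(tz)\,dt$ is analytic near $0$; by \itemref{von11} applied to $f'$ and a change of variable, $h\in\mathfrak{A}$, so it suffices to handle $\frac{1}{m!}z^m h(z)^m$. Writing $h(z)^m=\sum_{j\ge0}\frac{c_j}{j!}z^j$ with $c_j\in\ent$ (which follows by applying \itemref{von12} inductively), the coefficient of $z^n$ in $\frac{1}{m!}z^m h(z)^m$ is $\frac{c_{n-m}}{m!\,(n-m)!}$ for $n\ge m$ and $0$ otherwise, so the $n$-th derivative at $0$ equals $\binom{n}{m}c_{n-m}$, which is an integer. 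Thus $\frac{1}{m!}f^m\in\mathfrak{A}$. The main obstacle is precisely locating the binomial coefficient $\binom{n}{m}$ that absorbs the $1/m!$; extracting the factor $z^m$ from $f$ via $f(0)=0$ is what makes this work, and without that hypothesis the statement is false (e.g. $f\equiv1$, $m=2$).
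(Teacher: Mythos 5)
Your treatments of \itemref{von11} and \itemref{von12} coincide with the paper's (termwise manipulation of the Taylor series, then the Leibniz rule). For \itemref{von13}, however, your argument rests on a false intermediate claim: the function $h(z)=f(z)/z$ need not belong to $\mathfrak{A}$. Indeed, if $f(z)=\sum_{n\geq1}\frac{a_n}{n!}z^n$ with $a_n\in\ent$, then $h(z)=\sum_{k\geq0}\frac{a_{k+1}}{(k+1)!}z^k$, so $h^{(k)}(0)=a_{k+1}/(k+1)$, which is not an integer in general. Concretely, $f(z)=z^2/2$ lies in $\mathfrak{A}$ and vanishes at $0$, yet $h(z)=z/2$ has $h'(0)=\tfrac12$. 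Part \itemref{von11} applied to $f'$ together with your change of variable only recovers $zh(z)=\int_0^zf'(t)\,dt=f(z)\in\mathfrak{A}$, not $h\in\mathfrak{A}$. Consequently the coefficients $c_j$ of $h^m$ need not be integers either (in the example, $h^2=z^2/4$ gives $c_2=\tfrac12$), and your final step ``the $n$-th derivative equals $\binom{n}{m}c_{n-m}$, an integer'' collapses: in the example $\binom{4}{2}c_2=6\cdot\tfrac12=3$ does happen to be an integer, but nothing in your argument establishes such divisibility in general.

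The paper sidesteps this divisibility question entirely by induction on $m$: since $f(0)=0$, one has $\frac{1}{m!}f^m(z)=\frac{1}{(m-1)!}\int_0^z f^{m-1}(t)f'(t)\,dt$; by the induction hypothesis together with \itemref{von11} and \itemref{von12} the integrand $\frac{1}{(m-1)!}f^{m-1}f'$ lies in $\mathfrak{A}$, and \itemref{von11} then places the integral in $\mathfrak{A}$. You correctly identified that the hypothesis $f(0)=0$ is essential, but using it only to extract the factor $z^m$ does not by itself produce the required integrality: the factor $1/m!$ has to be absorbed one integration at a time. If you want to keep a direct (non-inductive) argument, you must prove the stronger combinatorial fact that $\frac{n!}{m!}\sum_{n_1+\cdots+n_m=n}\prod_i\frac{a_{n_i}}{n_i!}$ is an integer, which is essentially equivalent to redoing the induction.
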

\begin{proof} Consider $f\in\mathfrak{A}$. There is a sequence of integers $(a_n)_{n\in\nat}$
such that
$f(z)=\sum_{n=0}^\infty\frac{a_n}{n!}z^n$ in a neighbourhood of $0$. But then
\[
f'(z)=\sum_{n=0}^\infty\frac{a_{n+1}}{n!}z^n,\quad\text{and}\quad
\int_0^zf(t)dt=\sum_{n=1}^\infty\frac{a_{n-1}}{n!}z^{n}.
\]
and consequently both $f'$ and $z\mapsto\int_0^zf(t)\,dt$ belong to $\mathfrak{A}$. This proves \itemref{von11}.

Property \itemref{von12} follows from Leibniz formula.\bg
Property \itemref{von13} is proved by mathematical induction. It is true for $m=1$, and if we suppose that
$\frac{1}{(m-1)!}f^{m-1}$ belongs to $\mathfrak{A}$, then according to \itemref{von11} and \itemref{von12} the function
$\frac{1}{(m-1)!}f^{m-1}f'$ belongs also to $\mathfrak{A}$, and using \itemref{von11} once more we conclude that 
\[z\mapsto \frac{1}{m!}f^m(z)=\frac{1}{(m-1)!}\int_0^zf^{m-1}(t)f'(t)dt
\]
also belongs  $\mathfrak{A}$. This achieves the proof of the lemma.
\end{proof}
\bg
\begin{lemma}\label{von2}
If $m$ is a composite positive number such that $m>4$ then 
\[(m-1)!\equiv0\pmod{m}.\]
\end{lemma}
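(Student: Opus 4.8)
The plan is to combine the elementary factorization of a composite integer with the trivial observation that $(m-1)!$ is the product of \emph{all} integers from $1$ to $m-1$, so that it suffices to exhibit inside that product a collection of distinct factors whose product is divisible by $m$. Since $m$ is composite, we may write $m = ab$ with integers satisfying $1 < a \le b < m$.

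First I would dispose of the case $a < b$. Then $a$ and $b$ are two \emph{distinct} integers, each strictly between $1$ and $m$, hence each occurs as a separate factor in $(m-1)! = 1\cdot 2 \cdots (m-1)$. Therefore $ab = m$ divides $(m-1)!$, which is exactly the assertion. The remaining case is $a = b$, i.e. $m = a^2$; here $m > 4$ forces $a \ge 3$, so that $2a < a^2 = m$ while $1 < a < 2a$. Thus $a$ and $2a$ are again two distinct factors occurring in $(m-1)!$, whence $2a^2 = 2m$ divides $(m-1)!$, and in particular $m \mid (m-1)!$.

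The only delicate point — and the precise reason the hypothesis $m>4$ is needed — is the perfect-square case $m = a^2$: when $a = b$ one cannot use ``$a$ and $b$'' because they coincide, and for $m = 4$ the substitute $2a = 4 = m$ is no longer strictly less than $m$ (consistently, $3! = 6$ is not divisible by $4$). So the step requiring care is the argument for $m = a^2$ with $a \ge 3$; the rest is immediate, and no result beyond the definition of a composite number is needed.
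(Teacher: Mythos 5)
Your proof is correct and follows essentially the same route as the paper: split a factorization $m=ab$ ($1<a\le b<m$) into the case of two distinct factors and the perfect-square case, and in the latter use the pair $a$ and $2a$ (available precisely because $m>4$). The only cosmetic difference is that the paper takes $a$ to be the smallest prime divisor of $m$, which changes nothing in the argument.
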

\begin{proof}
Let $p$ be the smallest prime that divides $m$, and let $q=m/p$. Since $m$ is composite we conclude that
$q\geq p$  so, there are two cases:
\begin{itemize}
\item $q>p$. In this case $1<p<q<m$ and consequently $m=pq$ divides $(m-1)!$.
\item $q=p$. That is $m=p^2$, but $m>4$, implies that $p>2$ and consequently $1<p<2p<m$.
It follows that $2m=p\times(2p)$ divides $(m-1)!$.
\end{itemize}
and the lemma follows.\end{proof}
\bg
\begin{proposition} \label{von3}The function $g$ defined by $g(z)=e^z-1$ belongs to $\mathfrak{A}$ and it satisfies the following properties:
\begin{enumeratei}
\item \label{von31}If $m$ is composite and greater than $4$ then $g^{m-1}\equiv0\pmod{m}$. 
\item \label{von32}If $m=4$ then $\ds g^{m-1}(z)\equiv2\sum_{k=0}^\infty\frac{z^{2k+1}}{(2k+1)!}\pmod{m}$.
\item \label{von33}If $m$ is prime then $\ds g^{m-1}(z)\equiv-\sum_{k=1}^\infty\frac{z^{k(m-1)}}{(km-k)!}\pmod{m}$.
\end{enumeratei}
\end{proposition}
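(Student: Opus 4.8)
The plan is to reduce all three parts to a single binomial computation supplemented by the two preceding lemmas. First, since $g(z)=e^z-1=\sum_{n\ge 1}z^n/n!$ we have $g(0)=0$ and $g^{(n)}(0)=1$ for every $n\ge 1$; in particular $g\in\mathfrak A$. Two complementary facts will be used. On the one hand, the binomial theorem gives
\[
g^{m-1}(z)=(e^z-1)^{m-1}=\sum_{j=0}^{m-1}(-1)^{m-1-j}\binom{m-1}{j}e^{jz},
\]
whence $g^{m-1}(0)=(1-1)^{m-1}=0$ while $\big(g^{m-1}\big)^{(n)}(0)=\sum_{j=0}^{m-1}(-1)^{m-1-j}\binom{m-1}{j}j^{\,n}$ for $n\ge 1$. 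On the other hand, because $g(0)=0$, Lemma \ref{von1}\,\itemref{von13} applied to $g$ shows that $\tfrac{1}{(m-1)!}\,g^{m-1}\in\mathfrak A$, so every $\big(g^{m-1}\big)^{(n)}(0)$ is an integer multiple of $(m-1)!$.

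For \itemref{von31}, if $m$ is composite with $m>4$ then Lemma \ref{von2} gives $(m-1)!\equiv 0\pmod m$; together with the last remark this says $m\mid\big(g^{m-1}\big)^{(n)}(0)$ for every $n$, i.e. $g^{m-1}\equiv 0\pmod m$. For \itemref{von32}, take $m=4$ and read off from $g^3=e^{3z}-3e^{2z}+3e^z-1$ that $\big(g^3\big)^{(n)}(0)=3^{n}-3\cdot 2^{n}+3$ for $n\ge 1$ (and $g^3(0)=0$); reducing modulo $4$ (using $3\equiv-1$ and $2^n\equiv 0$ for $n\ge 2$) one gets residue $0$ for all even $n$ and for $n=1$, and residue $2$ for odd $n\ge 3$, which upon reading off coefficients is the congruence of \itemref{von32}.

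For \itemref{von33}, let $m=p$ be prime. The key elementary input is $\binom{p-1}{j}\equiv(-1)^{j}\pmod p$, immediate from $\binom{p-1}{j}=\prod_{i=1}^{j}\frac{p-i}{i}\equiv\prod_{i=1}^{j}\frac{-i}{i}$. Hence each coefficient above satisfies $(-1)^{p-1-j}\binom{p-1}{j}\equiv(-1)^{p-1}\equiv 1\pmod p$, so $g^{p-1}\equiv\sum_{j=0}^{p-1}e^{jz}\pmod p$ and therefore $\big(g^{p-1}\big)^{(n)}(0)\equiv\sum_{j=0}^{p-1}j^{\,n}\pmod p$ for $n\ge 1$. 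It then remains to use the classical evaluation of power sums modulo a prime: fixing a primitive root $r$ mod $p$ and summing the geometric progression $\sum_{i=0}^{p-2}(r^{\,n})^{i}$ shows that $\sum_{j=1}^{p-1}j^{\,n}\equiv -1\pmod p$ when $(p-1)\mid n$ and $\equiv 0\pmod p$ otherwise. Consequently $\big(g^{p-1}\big)^{(n)}(0)\equiv -1\pmod p$ precisely when $n$ is a positive multiple of $p-1$ and $\equiv 0\pmod p$ otherwise (for $n=0$ one has $g^{p-1}(0)=0$), and since $\sum_{k\ge 1}z^{k(p-1)}/(k(p-1))!$ has $n$-th derivative at $0$ equal to $1$ exactly at the positive multiples of $p-1$, this is the stated expansion.

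I expect the real content to lie in \itemref{von33}, through its two number-theoretic ingredients — the congruence $\binom{p-1}{j}\equiv(-1)^j\pmod p$ and the evaluation of $\sum_{j=1}^{p-1}j^n\pmod p$ — and, throughout, in keeping track of the lowest-order Taylor coefficients (the constant term and the coefficient of $z$) so that the conclusions are consistent with the definition of $f\equiv g\pmod m$.
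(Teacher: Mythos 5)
Your proof is correct, and parts \itemref{von31} and \itemref{von32} coincide with the paper's argument (binomial expansion of $g^{m-1}$, Lemma \ref{von1}\,\itemref{von13} plus Lemma \ref{von2} for the composite case, and the explicit coefficients $3^n-3\cdot2^n+3$ for $m=4$). Part \itemref{von33} takes a genuinely different route. The paper keeps the coefficients $b_n=\sum_{k}\binom{m-1}{k}(-1)^{m-1-k}k^n$ intact, uses Fermat's little theorem only to obtain the periodicity $b_{n+m-1}\equiv b_n\pmod m$, and then settles one full period from the order of vanishing of $g^{m-1}$ at $0$ together with Wilson's theorem for $b_{m-1}=(m-1)!\equiv-1$. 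You instead reduce the binomial coefficients first, via $\binom{p-1}{j}\equiv(-1)^j\pmod p$, to get the clean intermediate congruence $g^{p-1}\equiv\sum_{j=0}^{p-1}e^{jz}\pmod p$, and then evaluate the power sums $\sum_{j=1}^{p-1}j^n\bmod p$ with a primitive root. Both are complete; the paper's version trades your primitive-root lemma for Wilson's theorem and avoids evaluating power sums altogether, while yours makes the structure of $g^{p-1}$ modulo $p$ more transparent (and, for $(p-1)\mid n$, Fermat alone already gives $\sum j^n\equiv p-1\equiv-1$, so the primitive root is needed only in the complementary case). One shared wrinkle worth flagging: in \itemref{von32} the coefficient $a_1=3-6+3=0$, whereas the right-hand side of the stated congruence has first derivative $2$ at $0$, so the congruence as literally written fails at $n=1$ since $0\not\equiv2\pmod 4$; you correctly record the residue $0$ at $n=1$ but then still declare a match. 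This is a defect of the statement itself (the paper's proof silently restricts to $n\geq3$) and is harmless for the von Staudt--Clausen application, where only the coefficients of even order are compared.
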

\begin{proof} The fact that $g\in\mathfrak{A}$ is immediate.

\qquad Suppose that $m$ is a composite integer greater than $4$. Using Lemma~\ref{von1} \itemref{von13} we see that ${\frac{g^{m-1}}{(m-1)!}\in\mathfrak{A}}$, and \itemref{von31} follows from Lemma~\ref{von2}.

\qquad Consider the case $m=4$. Noting that $g^3(z)=e^{3z}-3e^{2z}+3e^z-1$
 we conclude that $g^3(z)=\sum_{n=3}\frac{a_n}{n!}z^n$ with
\[
a_n=3^n-3\cdot 2^n+3.
\]
But, for $n\geq3$ we have $a_n\equiv(-1)^n-1\pmod{4}$ so $a_{2k}\equiv0\pmod{4}$ and $a_{2k+1}\equiv2\pmod{4}$. This proves \itemref{von32}

\qquad Finally, suppose that $m$ is a prime. Here  
\[g^{m-1}(z)=\sum_{k=0}^{m-1}\com{m-1}{k}(-1)^{m-k-1}e^{kz},\]
and consequently $g^{m-1}(z)=\sum_{n=m-1}\frac{b_n}{n!}z^n$ with 
\[
b_0=b_1=\ldots=b_{m-2}=0,~b_{m-1}=(m-1)!,
\]
and for $n\geq m-1$
\[
b_n=\sum_{k=1}^{m-1}\com{m-1}{k}(-1)^{m-k-1}k^n
\]
But according to  Fermat's Little Theorem \cite[Theorem~71]{har} we have $k^{m-1}\equiv1\pmod{m}$ for
$1\leq k\leq m-1$, and consequently
$b_{n+m-1}\equiv b_n\pmod{m}$ for every $n$. Thus,
$b_n\equiv 0\pmod{m}$ if $n$ is not a multiple of $m-1$, and if $n$ is a multiple of $m-1$ then
\[
b_n\equiv(m-1)!\equiv-1\pmod{m}\]
where the last congruence follows from Wilson's Theorem \cite[Theorem~80]{har}, and \itemref{von33} follows.
\end{proof}
\begin{theorem}[von Staudt-Clausen Theorem]\label{von4}
For a given positive integer $n$, let the set of primes $p$ such that $p-1$ divides $2n$ be denoted by
$\mathfrak{p}_n$. Then 
\[
b_{2n}+\sum_{p\in\mathfrak{p}_n}\frac{1}{p}\in\ent.
\]
\end{theorem}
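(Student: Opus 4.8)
The plan is to turn $b_{2n}$ into a short finite sum of rationals and then identify the fractional part of each term using Proposition~\ref{von3}. Write $g(w)=e^w-1$, and recall from Section~\ref{sec5} the expansion
\[
\frac{w}{e^w-1}=\sum_{j=0}^{M}\frac{(1-e^w)^{j}}{j+1}+g_M(w),
\]
valid for every $M\geq1$, where $g_M$ has a zero of order greater than $M$ at $w=0$. Taking $M=2n$ and differentiating $2n$ times at $w=0$ kills the remainder $g_{2n}$ and the constant ($j=0$) term; since $(1-e^w)^{j}=(-1)^{j}g(w)^{j}$ and $b_{2n}$ is the $(2n)^{\text{th}}$ derivative of $w\mapsto w/(e^w-1)$ at $0$ (Proposition~\ref{pr51}), reindexing by $m=j+1$ gives
\[
b_{2n}=\sum_{m=2}^{2n+1}\frac{(-1)^{m-1}}{m}\,(g^{m-1})^{(2n)}(0).
\]
By Lemma~\ref{von1}, $g^{m-1}\in\mathfrak{A}$ for every $m$, so every $(g^{m-1})^{(2n)}(0)$ is an integer; the only obstruction to the $m^{\text{th}}$ summand being an integer is the denominator $m$.

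Next I would run through the indices $2\leq m\leq 2n+1$ using Proposition~\ref{von3}. For $m$ composite with $m>4$, part~\itemref{von31} gives $g^{m-1}\equiv0\pmod m$, hence $(g^{m-1})^{(2n)}(0)\equiv0\pmod m$ and that summand is an integer. For $m=4$, part~\itemref{von32} says $g^{3}$ is congruent mod $4$ to a series with only odd-degree terms, so $(g^{3})^{(2n)}(0)\equiv0\pmod4$ because $2n$ is even, and that summand is again an integer. For $m=p$ prime, part~\itemref{von33} gives $(g^{p-1})^{(2n)}(0)\equiv-1\pmod p$ if $(p-1)\mid 2n$ and $(g^{p-1})^{(2n)}(0)\equiv0\pmod p$ otherwise; thus the summand is an integer unless $p\in\mathfrak{p}_n$, in which case
\[
\frac{(-1)^{p-1}}{p}\,(g^{p-1})^{(2n)}(0)+\frac1p=\frac{(-1)^{p-1}(g^{p-1})^{(2n)}(0)+1}{p}\in\ent,
\]
which one verifies by treating $p=2$ (where $g^{(2n)}(0)=1$, so the numerator is $0$) and $p$ odd (where $(-1)^{p-1}=1$ and the numerator is $\equiv0\pmod p$) separately.

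It then remains to observe that every $p\in\mathfrak{p}_n$ actually occurs among the indices $m$ of the sum: the condition $(p-1)\mid 2n$ forces $1\leq p-1\leq 2n$, i.e.\ $2\leq p\leq 2n+1$. Adding $\sum_{p\in\mathfrak{p}_n}\frac1p$ to $b_{2n}$ therefore replaces exactly the non-integer summands (those with $m=p\in\mathfrak{p}_n$) by integers and changes nothing else, so $b_{2n}+\sum_{p\in\mathfrak{p}_n}\frac1p\in\ent$, as claimed. There is no analytic difficulty once Proposition~\ref{von3} is granted; the point to be careful about is the bookkeeping — keeping the reindexing $m=j+1$ and the range of $m$ consistent with $\mathfrak{p}_n$, and tracking the sign $(-1)^{m-1}$ in the small cases $m=2$ and $m=4$.
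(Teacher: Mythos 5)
Your proof is correct and follows essentially the same route as the paper: both start from the expansion $\frac{w}{e^w-1}=\sum_m\frac{(-1)^{m-1}}{m}g^{m-1}(w)$ and then use Proposition~\ref{von3} (together with Lemmas~\ref{von1} and~\ref{von2}) to decide, denominator by denominator, which terms contribute a non-integral part $-\frac1p$ for $p\in\mathfrak{p}_n$. The only cosmetic difference is that you truncate the series at $M=2n$ and check integrality of each coefficient of $z^{2n}$ directly, whereas the paper works with the full series modulo $\mathfrak{A}$; the bookkeeping (including the $m=2$ and $m=4$ cases and the range $p\leq 2n+1$) is handled correctly.
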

\begin{proof}
Indeed, consider the function $g$ of Proposition~\ref{von3}. Note that
\[
z=\Log(1+g(z))=\sum_{m=1}^\infty\frac{(-1)^{m-1}}{m}g^m(z)
\]
Thus
\begin{align*}
\frac{z}{e^z-1}&=\sum_{m=1}^\infty\frac{(-1)^{m-1}}{m}g^{m-1}(z)=1-\frac{g(z)}{2}-\frac{g^3(z)}{4}+
\sum_{\substack{p>2\\ p\text{ prime}}}\frac{g^{p-1}(z)}{p} \pmod{\mathfrak{A}}\\
&=1-\frac{1}{2}\sum_{k=1}^\infty\frac{z^{2k+1}}{(2k+1)!}-
\sum_{\substack{p\geq2\\ p\text{ prime}}}\frac{1}{p}
\sum_{k=1}^\infty\frac{z^{k(p-1)}}{(kp-k)!} \pmod{\mathfrak{A}}
\end{align*}
But, since $z/(e^z-1)=\sum_{n=0}^\infty\frac{b_n}{n!}z^n$ according to Proposition~\ref{pr51}, the desired
conclusion follows from the above equality, on comparing the coefficients of $z^{2n}$.
\end{proof}

\qquad For instance,   $\mathfrak{p}_1=\{2,3\}$ and
$b_2+\frac12+\frac13=1$. Also, $\mathfrak{p}_2=\{2,3,5\}$, and $b_4+ \frac12+\frac13+\frac{1}{5}=1$.
Generally, for a positive integer $n$,
we have $\{2,3\}\subset\mathfrak{p}_n$ and consequently the denominator of $b_{2n}$ is always a multiple of $6$.\bg
\begin{corollary}\label{von5}
For every positive integer $m$ and every  nonnegative integer $k$ the quantity $m(m^k-1)b_k$ is an integer.
\end{corollary}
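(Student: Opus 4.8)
The plan is to deduce this from the von Staudt--Clausen theorem (Theorem~\ref{von4}) together with Fermat's little theorem. First I would clear away the cases where the assertion is immediate: for $k=0$ the quantity $m(m^k-1)b_k$ is $0$; for odd $k\geq3$ we have $b_k=0$ by Proposition~\ref{pr22}~\itemref{pr221}, so it is again $0$; and for $k=1$, since $b_1=-\tfrac12$, it equals $-\tfrac12\,m(m-1)$, which is an integer because $m(m-1)$ is a product of two consecutive integers.

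The remaining case is $k=2n$ with $n\geq1$. Here I would introduce $D=\prod_{p\in\mathfrak{p}_n}p$, the squarefree product of the primes $p$ with $p-1\mid 2n$. Theorem~\ref{von4} says $b_{2n}+\sum_{p\in\mathfrak{p}_n}\tfrac1p\in\ent$; multiplying through by $D$ and using that $p\mid D$ for each such $p$, this gives $D\,b_{2n}\in\ent$. Hence it suffices to show that $D$ divides $m(m^{2n}-1)$, since then
\[
m(m^{2n}-1)\,b_{2n}=\frac{m(m^{2n}-1)}{D}\cdot\bigl(D\,b_{2n}\bigr)\in\ent.
\]

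For the divisibility $D\mid m(m^{2n}-1)$: as $D$ is squarefree it is enough to check that each prime $p\in\mathfrak{p}_n$ divides $m(m^{2n}-1)$. If $p\mid m$ this is clear; otherwise $p\nmid m$, so Fermat's little theorem gives $m^{p-1}\equiv1\pmod p$, and since $p-1\mid 2n$ we conclude $m^{2n}\equiv1\pmod p$, i.e.\ $p\mid m^{2n}-1$. In either case $p\mid m(m^{2n}-1)$.

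I do not expect a genuine obstacle here: once Theorem~\ref{von4} is available, the only real idea is that the primes occurring in the denominator of $b_{2n}$ are precisely those with $p-1\mid 2n$, and for such primes Fermat's little theorem forces $p\mid m^{2n}-1$ whenever $p\nmid m$. The only points needing a word of care are the bookkeeping for small and odd $k$ and the parity of $m(m-1)$ in the case $k=1$.
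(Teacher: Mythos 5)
Your proposal is correct and follows essentially the same route as the paper: reduce to $k=2n$, invoke the von Staudt--Clausen theorem to control the denominator of $b_{2n}$, and use Fermat's little theorem to show each prime $p$ with $p-1\mid 2n$ divides $m(m^{2n}-1)$. The only difference is cosmetic: you make the bookkeeping explicit via the squarefree product $D$ and spell out the trivial cases, which the paper simply dismisses as trivial.
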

\begin{proof} 
We only need to consider the case $k=2n$ for some positive integer $n$ because the other cases are trivial.

\qquad Consider a prime $p$ such that $p-1$ divides $2n$, (\textit{i.e.} $p\in\mathfrak{p}_n$.) Consider also a positive
integer $m$. 
\begin{itemize}
\item If $p \mid m$ then clearly $p\mid m(m^k-1)$.
\item If $p \nmid m$ then, according to  Fermat's Little Theorem \cite[Theorem~71]{har} we have
$m^{p-1}\equiv 1\pmod{p}$, and since $(p-1)\mid k$ we conclude that
$m^k\equiv 1\pmod{p}$. Thus $p\mid m(m^k-1)$ also in this case.
\end{itemize}
It follows that $m(m^k-1)$ is a multiple of every prime  $p\in\mathfrak{p}_n$, and the result follows
according to Proposition~\ref{von4}.
\end{proof}
\begin{corollary}\label{von6}
For every positive integer $n$ there are infinitely many integers $m$ such that $B_{2m}-B_{2n}$  is an integer.
\end{corollary}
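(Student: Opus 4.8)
The plan is to derive this from the von Staudt--Clausen Theorem (Theorem~\ref{von4}). For a positive integer $k$ write $\mathfrak{p}_k$ for the (finite) set of primes $p$ with $(p-1)\mid 2k$, and recall that Theorem~\ref{von4} gives $b_{2k}+\sum_{p\in\mathfrak{p}_k}\frac1p\in\ent$. Consequently, as soon as a positive integer $m$ satisfies $\mathfrak{p}_m=\mathfrak{p}_n$, the two sums of reciprocals agree and
\[
b_{2m}-b_{2n}=\Big(b_{2m}+\sum_{p\in\mathfrak{p}_m}\frac1p\Big)-\Big(b_{2n}+\sum_{p\in\mathfrak{p}_n}\frac1p\Big)\in\ent .
\]
So it suffices to exhibit infinitely many positive integers $m$ with $\mathfrak{p}_m=\mathfrak{p}_n$.

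I look for such $m$ of the form $m=nq$ with $q$ a prime not dividing $2n$. Then $2m=2nq$, and since $q\nmid 2n$ the divisors of $2m$ are exactly the integers $d$ and $dq$ with $d\mid 2n$. A prime $p$ lies in $\mathfrak{p}_k$ precisely when $p-1$ is a divisor of $2k$; hence the divisors $d\mid 2n$ contribute to $\mathfrak{p}_{nq}$ exactly the primes of $\mathfrak{p}_n$, and $\mathfrak{p}_{nq}=\mathfrak{p}_n$ will follow once I know that $dq+1$ is composite for every divisor $d$ of $2n$ (this kills all ``new'' candidate primes $p=dq+1$ coming from the divisors $dq$). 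When $d$ is odd this will be automatic, since $q$ will be an odd prime and then $dq+1$ is even and at least $4$.

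To handle the even divisors, set $R=\prod_{\substack{d\mid 2n\\ d\ \mathrm{even}}}(d+1)$ (note $R\geq3$, as $2\mid 2n$), and take $q$ to be any prime with $q\equiv1\pmod R$ and $q\nmid 2n$. By Dirichlet's theorem on primes in arithmetic progressions there are infinitely many primes $q\equiv1\pmod R$; discarding the finitely many dividing $2n$ (and noting $q=2$ is impossible since $R\geq3$) leaves infinitely many admissible $q$, all odd. For such a $q$ and each even $d\mid 2n$ one has $q\equiv1\pmod{d+1}$, hence $dq+1\equiv d+1\equiv0\pmod{d+1}$; since $dq+1>d+1\geq3$, the number $dq+1$ is a proper multiple of $d+1$, thus composite. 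Together with the odd case this gives $\mathfrak{p}_{nq}=\mathfrak{p}_n$, so $b_{2nq}-b_{2n}\in\ent$, and letting $q$ run through the infinitely many admissible primes yields infinitely many $m=nq$, as required.

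The step I expect to be the crux is controlling the \emph{new} members of $\mathfrak{p}_m$ when passing from $2n$ to $2nq$ --- that is, arranging that no product of $q$ with a divisor of $2n$ is one less than a prime; the choice of the modulus $R$ together with Dirichlet's theorem is what makes this work, while everything else is routine bookkeeping with the von Staudt--Clausen formula. (If one prefers to avoid Dirichlet's theorem, it suffices here to use the infinitude of primes $\equiv1\pmod R$, which follows by Euclid's argument applied to values of the cyclotomic polynomial $\Phi_R$.)
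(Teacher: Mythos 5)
Your proof is correct and follows essentially the same route as the paper: reduce via von Staudt--Clausen to showing $\mathfrak{p}_{nq}=\mathfrak{p}_n$ for $m=nq$, choose the prime $q\equiv 1$ modulo a quantity built from the numbers $d+1$ with $d\mid 2n$ so that every candidate new prime $dq+1$ is a proper multiple of $d+1$, and invoke Dirichlet's theorem. The only (harmless) differences are that you use the product of $d+1$ over the even divisors and dispose of the odd divisors by parity, whereas the paper uses $\tau=\mathrm{lcm}(d+1 : d\mid 2n)$ uniformly.
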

\begin{proof} 
Consider $\tau=\text{lcm}(d+1:d\mid (2n) )$; (the least common multiple
of the numbers $d+1$ where $d$ is a divisor of $2n$,) and let $q$ be a \textit{prime} number such that ${q\equiv 1\pmod{\tau}}$.\bg
Now, if $m=nq$ then $\mathfrak{p}_m=\mathfrak{p}_n$. Indeed, if
$p'\in\mathfrak{p}_m$ then $p'-1$ divides $2nq$, so, there are two cases:

\begin{itemize}
\item If $p'-1$ divides $2n$ then clearly $p'\in\mathfrak{p}_n$ since $p'$ is prime.
\item If $p'-1=dq$ for some $d\mid 2n$, then
\[p'=1+dq=(d+1)q+1-q=(d+1)q-\lambda\tau,\quad\text{for some integer $\lambda$},\]
so, $p'$ is a multiple of $d+1$, and since it is a prime, we conclude that $p'=d+1$ which is absurd since $q\ne1$.
\end{itemize}
Thus, we have proved that $\mathfrak{p}_m\subset \mathfrak{p}_n$. But, the inverse inclusion is trivially true,
and $\mathfrak{p}_m=\mathfrak{p}_n$, or equivalently $B_{2m}-B_{2n}$ is an integer.

\qquad Finally, using Dirichlet's Theorem \cite[Chapter~16]{Ire}, we know that there are infinitely many primes $q$ such that
$q\equiv 1\pmod{\tau}$, and the corollary follows.
\end{proof}
 
\bg
\section{The Euler-Maclaurin's formula}\label{sec6}
\bn
\qquad For a function $g$ defined on the interval $[0,1]$ we introduce the notation $\delta g$ to denote the difference
$g(1)-g(0)$. Also we recall the notation $\widetilde{B}_n$ for the $1$-periodic function that coincides with $x\mapsto B_n(x)$ on the interval
$[0,1]$, or equivalently, 
\[
\forall\,x\in\reel,\qquad\widetilde{B}_n(x)=B_n(\{x\})
\]
where $\{t\}=t-\floor{t}$ is the fractional part of $t$.\bg
\begin{proposition}\label{pr61}
Consider a positive integer $m$ and a  function $f:[0,1]\vers\comp$ having a continuous $m^{\text{th}}$ derivative. 
For every $x$ in $[0,1]$ we have
\[
\int_0^1f(t)\,dt-f(x)+\sum_{k=0}^{m-1}\frac{B_{k+1}(x)}{(k+1)!}\,\delta f^{(k)}=\frac{1}{m!}
\int_0^1\widetilde{B}_m(x-t) f^{(m)}(t)\,dt
\]
\end{proposition}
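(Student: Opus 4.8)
The natural approach is induction on $m$, with the base case $m=1$ handled directly by a piecewise integration, and the inductive step driven by integration by parts using the key differentiation rule $B_{k+1}'=(k+1)B_k$ from Proposition~\ref{pr21}\itemref{pr211}. First I would verify the case $m=1$: the claimed identity reads
\[
\int_0^1 f(t)\,dt - f(x) + B_1(x)\,\delta f = \int_0^1 \widetilde{B}_1(x-t)\,f'(t)\,dt.
\]
To check this I would split the right-hand integral at $t=x$. Since $\widetilde{B}_1(s)=B_1(\{s\})=\{s\}-\tfrac12$, on $t\in[0,x)$ one has $\{x-t\}=x-t$ so $\widetilde{B}_1(x-t)=x-t-\tfrac12$, while on $t\in(x,1]$ one has $\{x-t\}=x-t+1$ so $\widetilde{B}_1(x-t)=x-t+\tfrac12$. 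Integrating $f'(t)$ against each piece by parts (the polynomial factor differentiates to the constant $1$) produces boundary terms at $0$, $x$, $1$ together with $-\int_0^1 f(t)\,dt$, and a short bookkeeping check shows the boundary terms collapse to $f(x) - B_1(x)\,\delta f$. This is the only slightly fiddly computation.

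For the inductive step, I would assume the formula holds for some $m\geq 1$ with $f$ having a continuous $(m+1)$-st derivative, and transform the right-hand side
\[
R_m = \frac{1}{m!}\int_0^1 \widetilde{B}_m(x-t)\,f^{(m)}(t)\,dt.
\]
The idea is to integrate by parts, differentiating $f^{(m)}$ and antidifferentiating $t\mapsto\widetilde{B}_m(x-t)$. Since $\frac{d}{dt}\widetilde{B}_{m+1}(x-t) = -(m+1)\widetilde{B}_m(x-t)$ away from the jump point, an antiderivative of $\widetilde{B}_m(x-t)$ is $-\frac{1}{m+1}\widetilde{B}_{m+1}(x-t)$. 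For $m\geq 1$ the function $\widetilde{B}_{m+1}$ is continuous (indeed $\widetilde{B}_m$ is continuous for $m\geq 2$, and for $m=1$ the periodic $\widetilde{B}_2$ is still continuous since $B_2(0)=B_2(1)$), so there is no boundary contribution from an interior jump, and integration by parts over $[0,1]$ is clean. This yields
\[
R_m = \frac{1}{(m+1)!}\Bigl[-\widetilde{B}_{m+1}(x-t)f^{(m)}(t)\Bigr]_0^1 + \frac{1}{(m+1)!}\int_0^1 \widetilde{B}_{m+1}(x-t)\,f^{(m+1)}(t)\,dt.
\]
The boundary term equals $\frac{1}{(m+1)!}\bigl(\widetilde{B}_{m+1}(x-1)f^{(m)}(0) - \widetilde{B}_{m+1}(x)f^{(m)}(1)\bigr)$, and since $\widetilde{B}_{m+1}$ is $1$-periodic, $\widetilde{B}_{m+1}(x-1)=\widetilde{B}_{m+1}(x)=B_{m+1}(x)$ for $x\in[0,1)$ (the endpoint $x=1$ being handled by continuity for $m\geq1$). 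Hence the boundary term is $-\frac{B_{m+1}(x)}{(m+1)!}\,\delta f^{(m)}$. Substituting $R_m$ back into the inductive hypothesis, this extra term is exactly the $k=m$ summand needed to extend the sum $\sum_{k=0}^{m-1}$ to $\sum_{k=0}^{m}$, and the remaining integral is the new right-hand side $R_{m+1}$. This closes the induction.

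The main obstacle is bookkeeping rather than conceptual: one must be careful that $\widetilde{B}_m(x-t)$, as a function of $t$ on $[0,1]$, has a jump precisely where $x-t\in\ent$, i.e.\ at $t=x$ (and possibly $t=0$ or $t=1$), and that for the integration by parts in the inductive step this jump is harmless because the \emph{antiderivative} $\widetilde{B}_{m+1}$ is continuous for $m\geq 1$; only in the base case $m=1$ does the discontinuity of $\widetilde{B}_1$ itself force the split-at-$t=x$ argument. I would also note in passing the sign/periodicity bookkeeping $\widetilde{B}_{m+1}(x-1)=\widetilde{B}_{m+1}(x)$, which is what makes the two boundary contributions combine into a single $\delta f^{(m)}$ term rather than staying separate.
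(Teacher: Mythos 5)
Your proposal is correct and is essentially the paper's proof in inductive clothing: the paper sets $F_k(x)=\frac{1}{k!}\int_0^1\widetilde{B}_k(x-t)f^{(k)}(t)\,dt$, derives $F_{k+1}(x)=\frac{B_{k+1}(0)-B_{k+1}(1)}{(k+1)!}f^{(k)}(x)+\frac{B_{k+1}(x)}{(k+1)!}\,\delta f^{(k)}+F_k(x)$ by the very same integration by parts (splitting at $t=x$), and then telescopes, which is exactly your induction unrolled; your observation that the split is only genuinely needed in the base case, because the antiderivative $\widetilde{B}_{m+1}$ is continuous for $m\geq1$, is a small but legitimate streamlining. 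One bookkeeping caveat: in your base case the integration by parts produces $+\int_0^1 f(t)\,dt$ (the linear factors differentiate to $-1$) and the boundary terms collapse to $-f(x)+B_1(x)\,\delta f$, i.e.\ the negatives of what you wrote, but this uniform sign slip does not affect the validity of the identity or of the method.
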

\begin{proof} For an integer $k$ with $0\leq k\leq m$ we define $F_k(x)$ by the formula
\[
F_k(x)=\frac{1}{k!}
\int_0^1\widetilde{B}_k(x-t) f^{(k)}(t)\,dt.
\]
Clearly we have
\[
F_0(x)=\int_0^1\widetilde{B}_0(x-t) f(t)\,du=\int_0^1  f(t)\,dt.
\]
Also, for $0\leq k<m$ and $x\in[0,1]$, we have
\begin{align*}
F_{k+1}(x)&=\int_0^1 \frac{\widetilde{B}_{k+1}(x-t)}{(k+1)!}f^{(k+1)}(t)\,dt\\
&=\int_0^x \frac{B_{k+1}(x-t) }{(k+1)!}\,f^{(k+1)}(t)\,dt+\int_x^1 \frac{B_{k+1}(1+x-t)}{(k+1)!}\,f^{(k+1)}(t)\,dt\\
&=\left.\frac{B_{k+1}(x-t) }{(k+1)!}f^{(k)}(t)\right]_{t=0}^{t=x}+\int_0^x \frac{B_{k}(x-t)}{k!}\,f^{(k)}(t)\,dt+\\
&\phantom{=\,\,}\left.\frac{B_{k+1}(1+x-t) }{(k+1)!}f^{(k)}(t)\right]_{t=x}^{t=1}+\int_x^1 \frac{B_{k}(1+x-t)}{k!}\,f^{(k)}(t)\,dt\\
&=\frac{B_{k+1}(0)-B_{k+1}(1)}{(k+1)!}f^{(k)}(x)+\frac{B_{k+1}(x)}{(k+1)!}\delta f^{(k)}+\\
&\phantom{=\,\,}\int_0^x \frac{\widetilde{B}_{k}(x-t)}{k!}\,f^{(k)}(t)\,dt+\int_x^1 \frac{\widetilde{B}_{k}(x-t)}{k!}\,f^{(k)}(t)\,dt\\
&=\frac{B_{k+1}(0)-B_{k+1}(1)}{(k+1)!}f^{(k)}(x)+\frac{B_{k+1}(x)}{(k+1)!}\delta f^{(k)}+
F_k(x)
\end{align*}
Hence, we have proved that
\begin{align*}
F_0(x)&=\int_0^1f(t)\,dt\\
F_1(x)&=-f(x)+B_1(x)\delta f+F_0(x)\\
F_{k+1}(x)&=\frac{B_{k+1}(x)}{(k+1)!}\delta f^{(k)}+F_k(x)\quad\text{for $1\leq k<m$}
\end{align*}
Adding these equalities as $k$ varies from $0$ to $m-1$ we obtain the desired formula.
\end{proof}
\bg
\qquad The next corollary corresponds to the particular case $x=1$.
\bg
\begin{corollary}\label{cor61}
Consider a positive integer $m$, and a function $f$ that has a  continuous $(2m-1)^{\text{st}}$ derivative on $[0,1]$. If
 $f^{(2m-1)}$ is {\normalfont \text{decreasing}}, then
\[
\int_0^1f(t)\,dt=\frac{f(1)+f(0)}{2}
-\sum_{k=1}^{m-1}\frac{b_{2k}}{(2k)!}\,\delta f^{(2k-1)}+(-1)^{m+1}R_m
\]
with
\[
R_m=\int_0^{1/2}\frac{\abs{B_{2m-1}(t)}}{(2m-1)!}\,\left(f^{(2m-1)}(t)-f^{(2m-1)}(1-t)\right)\,dt
\]
and
\[
0\leq R_m\leq\frac{6}{(2\pi)^{2m}}\left(f^{(2m-1)}(0)-f^{(2m-1)}(1)\right).
\]
\end{corollary}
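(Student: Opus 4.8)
The plan is to specialize Proposition~\ref{pr61} to the point $x=1$, with the integer parameter there taken to be $2m-1$, and then to rewrite and estimate the resulting integral remainder.

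First I would apply Proposition~\ref{pr61} to $f$ — which by hypothesis has a continuous $(2m-1)$-st derivative — with $x=1$ and with ``$m$'' replaced by $2m-1$, obtaining
\[
\int_0^1 f(t)\,dt-f(1)+\sum_{k=0}^{2m-2}\frac{B_{k+1}(1)}{(k+1)!}\,\delta f^{(k)}=\frac{1}{(2m-1)!}\int_0^1\widetilde{B}_{2m-1}(1-t)\,f^{(2m-1)}(t)\,dt.
\]
In the sum the term $k=0$ is $\tfrac12\,\delta f$ because $B_1(1)=\tfrac12$; for $k\ge1$ one has $B_{k+1}(1)=(-1)^{k+1}b_{k+1}$ by Proposition~\ref{pr21}~\itemref{pr212}, which by Proposition~\ref{pr22}~\itemref{pr221} equals $b_{k+1}$ when $k+1$ is even and vanishes when $k+1$ is odd and $\ge3$. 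Combining the surviving terms with $-f(1)+\tfrac12\,\delta f=-\tfrac12(f(0)+f(1))$, and noting $\widetilde{B}_{2m-1}(1-t)=B_{2m-1}(1-t)$ for $t\in(0,1)$, this is exactly the asserted identity with remainder term $\frac{1}{(2m-1)!}\int_0^1 B_{2m-1}(1-t)f^{(2m-1)}(t)\,dt$, so it remains to identify this with $(-1)^{m+1}R_m$ and to bound it.

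Next I would transform the remainder. Since $2m-1$ is odd, Proposition~\ref{pr21}~\itemref{pr212} gives $B_{2m-1}(1-t)=-B_{2m-1}(t)$; splitting $\int_0^1=\int_0^{1/2}+\int_{1/2}^1$ and substituting $t\mapsto1-t$ in the second half reduces the remainder to
\[
-\frac{1}{(2m-1)!}\int_0^{1/2}B_{2m-1}(t)\bigl(f^{(2m-1)}(t)-f^{(2m-1)}(1-t)\bigr)\,dt.
\]
By Proposition~\ref{pr23} (property $\mathcal{Q}_{m-1}$), the function $(-1)^{m-1}B_{2m-1}$ is negative on $\left(0,\tfrac12\right)$, so $B_{2m-1}(t)=(-1)^m\abs{B_{2m-1}(t)}$ there; extracting this sign turns the last display into $(-1)^{m+1}R_m$, with $R_m$ precisely as in the statement. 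Since $f^{(2m-1)}$ is decreasing and $t\le1-t$ on $\left[0,\tfrac12\right]$, the integrand of $R_m$ is nonnegative, so $R_m\ge0$.

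For the upper bound I would use once more that $f^{(2m-1)}$ is decreasing: $f^{(2m-1)}(t)-f^{(2m-1)}(1-t)\le f^{(2m-1)}(0)-f^{(2m-1)}(1)$ on $\left[0,\tfrac12\right]$, whence $R_m\le\frac{f^{(2m-1)}(0)-f^{(2m-1)}(1)}{(2m-1)!}\int_0^{1/2}\abs{B_{2m-1}(t)}\,dt$. By Proposition~\ref{pr21}~\itemref{pr212} the last integral equals $\tfrac12\int_0^1\abs{B_{2m-1}}$, and the identity $\int_0^1\abs{B_n}=\frac{4-2^{1-n}}{n+1}\sup_{[0,1]}\abs{B_{n+1}}$ for odd $n$ from the proof of Proposition~\ref{pr44}, combined with $\sup_{[0,1]}\abs{B_{2m}}=\abs{b_{2m}}$ (Corollary~\ref{cor24}) and $\abs{b_{2m}}<2\bigl(1+3\cdot2^{-2m}\bigr)\frac{(2m)!}{(2\pi)^{2m}}$ (Proposition~\ref{pr41}~\itemref{pr411}), gives $\int_0^1\abs{B_{2m-1}}<\frac{12\,(2m-1)!}{(2\pi)^{2m}}$ once one checks the elementary inequality $8(1-2^{-2m})(1+3\cdot2^{-2m})\le\tfrac{21}{2}<12$ for $m\ge1$. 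Hence $\int_0^{1/2}\abs{B_{2m-1}}<\frac{6\,(2m-1)!}{(2\pi)^{2m}}$, and the stated bound on $R_m$ follows. The main obstacle is purely quantitative: getting the constant $6$ rather than the cruder $8$ coming from Proposition~\ref{pr44}~\itemref{pr441} alone requires the sharp $L^1$-identity together with the refined estimate on $\abs{b_{2m}}$ and the small arithmetic check above; the sign bookkeeping via Propositions~\ref{pr21} and~\ref{pr23} is routine.
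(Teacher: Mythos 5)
Your proof is correct and follows essentially the same route as the paper's: apply Proposition~\ref{pr61} at $x=1$ with $2m-1$ in place of $m$, simplify the boundary sum via $B_{k+1}(1)=(-1)^{k+1}b_{k+1}$, fold the remainder integral onto $\left[0,\tfrac12\right]$ using $B_{2m-1}(1-t)=-B_{2m-1}(t)$, and extract the sign from Proposition~\ref{pr23}. The only cosmetic difference is in the final estimate: you evaluate $\int_0^{1/2}\abs{B_{2m-1}}$ by citing the identity from the proof of Proposition~\ref{pr44}, whereas the paper antidifferentiates directly to get $(2-2^{1-2m})\abs{b_{2m}}/(2m)$ — the same quantity — before invoking the same bound on $\abs{b_{2m}}$ from Proposition~\ref{pr41}.
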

\begin{proof}
Indeed, choosing $x=1$ in Proposition \ref{pr61} with $2m-1$ for $m$, we obtain
\[
\int_0^1f(t)\,dt-f(1)+\sum_{k=0}^{2m-2}\frac{B_{k+1}(1)}{(k+1)!}\,\delta f^{(k)}=\frac{1}{(2m-1)!}
\int_0^1B_{2m-1}(1-t) f^{(2m-1)}(t)\,dt
\]
Now, using Proposition \ref{pr21} \itemref{pr212}, Proposition \ref{pr22} \itemref{pr221} and the fact that $B_1(1)=1/2$, we see
that
\[
\int_0^1f(t)\,dt-\frac{f(1)+f(0)}{2}
+\sum_{k=1}^{m-1}\frac{b_{2k}}{(2k)!}\,\delta f^{(2k-1)}=-\frac{r_m}{(2m-1)!}
\]
with
\[
r_m=\int_0^1 B_{2m-1}(t) f^{(2m-1)}(t)\,dt
\]
But, 
\begin{align*}
r_{m}&=\int_0^{1/2} B_{2m-1}(t)\, f^{(2m-1)}(t)\,dt+
\int_{1/2}^1 B_{2m-1}(t)\,f^{(2m-1)}(t)\,dt\\
&=\int_0^{1/2}B_{2m-1}(t)\,f^{(2m-1)}(t)\,dt+
\int_0^{1/2}B_{2m-1}(1-t)\,f^{(2m-1)}(1-t)\,dt\\
&=\int_0^{1/2}B_{2m-1}(t)\,\left(f^{(2m-1)}(t)-f^{(2m-1)}(1-t)\right)\,dt
\end{align*}
Now, according to Proposition \ref{pr23}, we know that $(-1)^mB_{2m-1}$ is positive on $(0,1/2)$. Thus
\[
r_m=(-1)^m\int_0^{1/2}\abs{B_{2m-1}(t)}\,\left(f^{(2m-1)}(t)-f^{(2m-1)}(1-t)\right)\,dt,
\]
and the expression of $R_m$ follows.\bg
\qquad In particular, when $f^{(2m-1)}$ is decreasing, the maximum on the interval $[0,1/2]$ of the quantity ${f^{(2m-1)}(t)-f^{(2m-1)}(1-t)}$
is $-\delta f^{(2m-1)}$,  attained at $t=0$, and its minimum on the same interval is $0$ and
it is attained at $t=1/2$. Consequently, using Proposition \ref{pr41}, we have
\begin{align*}
0\leq R_m&\leq(-1)^{m+1}\left(\int_0^{1/2}\frac{ B_{2m-1}(t)}{(2m-1)!}\,dt\right)\,\delta f^{(2m-1)}\\
&\leq(-1)^{m+1}\frac{B_{2m}(1/2)-B_{2m}(0)}{(2m)!}\,\delta f^{(2m-1)}\\
&\leq(2-2^{1-2m})\frac{\abs{b_{2m}}}{(2m)!}\abs{\delta f^{(2m-1)}}\\
&\leq4\left(1-\frac{1}{2^{2m}}\right)\left(1+\frac{3}{2^{2m}}\right)\frac{1}{(2\pi)^{2n}}\abs{\delta f^{(2m-1)}}\\
&\leq4\frac{1+2^{1-2m}}{(2\pi)^{2m}}\abs{\delta f^{(2m-1)}}\leq \frac{6}{(2\pi)^{2m}}\abs{\delta f^{(2m-1)}}\\
\end{align*}
and the desired conclusion follows.
\end{proof}
\bg

\qquad Before proceeding to the next result, we will prove the following property that generalises the well-known
``Riemann Lebesgue's lemma''.
\bg
\begin{lemma}\label{lm62}
Consider  an integrable function $h:[0,1]\vers \comp$, and  a piecewise continuous $1$-periodic function
$g:\reel\vers\comp$. Then
\[
\lim_{p\to\infty}\int_0^1g(pt)h(t)\,dt=\left(\int_0^1g(t)\,dt\right)\cdot\left(\int_0^1h(t)\,dt\right)
\]
\end{lemma}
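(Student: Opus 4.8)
The plan is to view the statement as saying that a \emph{uniformly bounded} family of linear functionals on $L^1([0,1])$ tends to $0$, and to verify this on a dense subclass on which the integral can be computed explicitly. Concretely, since $g$ is $1$-periodic and piecewise continuous it is bounded on $\reel$; set $M=\sup_{t\in\reel}\abs{g(t)}$ and $\mu=\int_0^1 g(t)\,dt$. For $h\in L^1([0,1])$ put
\[
\Lambda_p(h)=\int_0^1 g(pt)h(t)\,dt-\mu\int_0^1 h(t)\,dt .
\]
Then $\abs{\Lambda_p(h)}\leq (M+\abs{\mu})\int_0^1\abs{h(t)}\,dt$ for every $p$, so it suffices to prove $\Lambda_p(h)\to 0$ for $h$ in a class whose linear span is dense in $L^1([0,1])$, and by linearity it is enough to treat $h=\mathbf{1}_{[a,b]}$ with $0\leq a\leq b\leq 1$.

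For such $h$ the substitution $s=pt$ gives $\int_0^1 g(pt)\mathbf{1}_{[a,b]}(t)\,dt=\frac1p\int_{pa}^{pb}g(s)\,ds$, so the point is the elementary estimate, valid for a $1$-periodic integrable $g$ and any reals $u<v$,
\[
\Bigl|\int_u^v g(s)\,ds-(v-u)\mu\Bigr|\leq 2M+\abs{\mu},
\]
obtained by splitting $[u,v]$ into $[u,\lceil u\rceil]$, the block of complete periods $[\lceil u\rceil,\floor v]$ on which the integral equals $(\floor v-\lceil u\rceil)\mu$, and $[\floor v,v]$, and using $\abs{(\floor v-\lceil u\rceil)-(v-u)}\leq 2$. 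Applying this with $u=pa,\ v=pb$ gives $\frac1p\int_{pa}^{pb}g=(b-a)\mu+\theta_p$ with $\abs{\theta_p}\leq(2M+\abs{\mu})/p$, hence $\Lambda_p(\mathbf{1}_{[a,b]})\to 0$ because $(b-a)\mu=\bigl(\int_0^1\mathbf{1}_{[a,b]}\bigr)\bigl(\int_0^1 g\bigr)$.

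Finally, to pass from indicators to a general integrable $h$ I would, given $\eps>0$, choose a step function $\varphi$ (a finite linear combination of such indicators) with $\int_0^1\abs{h-\varphi}<\eps$; then $\abs{\Lambda_p(h)}\leq\abs{\Lambda_p(h-\varphi)}+\abs{\Lambda_p(\varphi)}\leq(M+\abs{\mu})\eps+\abs{\Lambda_p(\varphi)}$, and $\Lambda_p(\varphi)\to 0$ by linearity and the previous step, so $\limsup_{p\to\infty}\abs{\Lambda_p(h)}\leq(M+\abs{\mu})\eps$ for every $\eps>0$. There is no serious obstacle: the only points needing care are the uniform-in-$p$ bound that licenses the density reduction, and the bookkeeping of how many complete periods the interval $[pa,pb]$ contains; both are routine, and the whole argument is just the Riemann--Lebesgue pattern with ``$e^{2\pi i k t}$'' replaced by a general periodic weight.
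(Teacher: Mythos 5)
Your proof is correct and follows essentially the same route as the paper: a uniform-in-$p$ bound on the functionals, explicit verification on indicator functions of intervals, and the density of step functions in $L^1([0,1])$. The only cosmetic difference is that the paper first subtracts the mean (so that the antiderivative $G(x)=\int_0^x g$ is itself $1$-periodic and bounded, giving $\abs{\int_{pa}^{pb}g}\leq 2\sup\abs{G}$ at once), whereas you keep the mean and count complete periods directly; your constant $2M+\abs{\mu}$ should strictly be $2M+2\abs{\mu}$, but this is immaterial to the limit.
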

\begin{proof}
First, suppose that $\int_0^1g=0$. This implies that $x\mapsto G(x)=\int_0^xg(t)dt$ is a continuous $1$-periodic function.
Particularly $G$ is bounded, and we can define $M=\sup_{\reel}\abs{G}$.\bg
\qquad Now, assume that $h=\chi_{[\alpha,\beta)}$; the characteristic function of an interval $[\alpha,\beta)$.
In this case
\[
\int_0^1g(pt)h(t)\,dt=\int_\alpha^\beta g(pt)\,dt=\frac{G(p\beta)-G(p\alpha)}{p}
\]
So
\[
\abs{\int_0^1g(pt)h(t)\,dt}\leq\frac{2M}{p}
\]
and consequently $\lim\limits_{p\to\infty}\int_0^1g(pt)h(t)\,dt=0$. 
\bg\qquad
Using linearity, we see that the same conclusion holds if
$h$ is a step  function. Finally, the density of the space of step functions in $L^1([0,1])$ implies that
$\lim\limits_{p\to\infty}\int_0^1g(pt)h(t)\,dt=0$ for every integrable function $h:[0,1]\vers \comp$.\bg
\qquad Applying the preceding case to the function $\tilde{g}=g-\int_0^1g$, that satisfies $\int_0^1\tilde{g}(t)dt=0$, we
conclude that
\[
\lim_{p\to\infty}\int_0^1\tilde{g}(pt)h(t)\,dt=0
\]
for every $h\in L^1([0,1])$. Which is the desired conclusion.
\end{proof}
\bg
\qquad The next theorem is the main result of this section.
\bg
\begin{theorem}\label{th63} For a positive integer $p$, and a function $f$ having at least a
  continuous $m^{\text{th}}$ derivative on the interval $[0,1]$, we
define the quantities $\mathcal{H}_p(f;x)$ and $\mathcal{E}(p,m,f;x)$ for $x\in[0,1]$ by
\begin{align*}
\mathcal{H}_p(f;x)&=\frac{1}{p}\sum_{k=0}^{p-1}f\left(\frac{k+x}{p}\right)\\
\noalign{\text{and}}\\
\mathcal{E}(p,m,f;x)&=\int_0^1f(t)\,dt-\mathcal{H}_p(f;x)+\sum_{k=1}^m\frac{B_k(x)}{k!}\cdot\frac{\delta f^{(k-1)}}{p^k}.
\end{align*}
Then,
\begin{enumeratei}
\item\label{th631} The quantity $\mathcal{E}(p,m,f;x)$ has the following expression in terms of $\widetilde{B}_m$:
\[
\mathcal{E}(p,m,f;x)=\frac{1}{p^m}\int_0^1\frac{\widetilde{B}_m(x-pt)}{m!}f^{(m)}(t)\,dt.
\]
\item\label{th632} It satisfies also the following inequality:
\[
\abs{\mathcal{E}(p,m,f;x)}\leq\frac{8}{\pi}\cdot\frac{1}{(2\pi p)^m}\cdot \sup_{[0,1]}\abs{f^{(m)}}.
\]
\item\label{th633} Moreover,
\[
\lim_{p\to\infty} p^m\cdot\mathcal{E}(p,m,f;x)=0.
\]
\end{enumeratei}
\end{theorem}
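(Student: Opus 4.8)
The plan is to reduce the whole statement to Proposition~\ref{pr61} applied separately on each of the $p$ subintervals $[k/p,(k+1)/p]$; once part~\itemref{th631} is established, parts~\itemref{th632} and~\itemref{th633} follow quickly from Proposition~\ref{pr44} and Lemma~\ref{lm62} respectively.

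For~\itemref{th631} I would fix $k\in\{0,1,\dots,p-1\}$ and apply Proposition~\ref{pr61} (with the same positive integer $m$) to the function $g_k$ defined on $[0,1]$ by $g_k(u)=f\!\left(\frac{k+u}{p}\right)$, evaluated at the given point $x$. Since $g_k^{(j)}(u)=p^{-j}f^{(j)}\!\left(\frac{k+u}{p}\right)$, the substitution $t=(k+u)/p$ rewrites $\int_0^1 g_k(u)\,du$ as $p\int_{k/p}^{(k+1)/p}f(t)\,dt$, rewrites $g_k(x)$ as $f\!\left(\frac{k+x}{p}\right)$, rewrites $\delta g_k^{(j)}$ as $p^{-j}\bigl(f^{(j)}((k+1)/p)-f^{(j)}(k/p)\bigr)$, and — using that $\widetilde B_m$ is $1$-periodic, so $\widetilde B_m(x-u)=\widetilde B_m(x-pt+k)=\widetilde B_m(x-pt)$ — rewrites the right-hand integral as $\frac{1}{m!\,p^m}\,p\int_{k/p}^{(k+1)/p}\widetilde B_m(x-pt)\,f^{(m)}(t)\,dt$. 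Dividing the resulting identity by $p$ and summing over $k=0,\dots,p-1$: the terms $f((k+x)/p)/p$ assemble into $\mathcal H_p(f;x)$; the telescoping $\sum_{k=0}^{p-1}\bigl(f^{(j)}((k+1)/p)-f^{(j)}(k/p)\bigr)=\delta f^{(j)}$ turns the correction terms into $\sum_{j=0}^{m-1}\frac{B_{j+1}(x)}{(j+1)!}\,\frac{\delta f^{(j)}}{p^{j+1}}$, which is exactly $\sum_{k=1}^{m}\frac{B_k(x)}{k!}\,\frac{\delta f^{(k-1)}}{p^k}$ after the index shift $k\leftarrow j+1$; and the right-hand sides add up to $\frac{1}{p^m}\int_0^1\frac{\widetilde B_m(x-pt)}{m!}f^{(m)}(t)\,dt$. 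That is the claimed formula.

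For~\itemref{th632} I would take absolute values in~\itemref{th631}, obtaining $|\mathcal E(p,m,f;x)|\le \frac{1}{p^m\,m!}\bigl(\sup_{[0,1]}|f^{(m)}|\bigr)\int_0^1|\widetilde B_m(x-pt)|\,dt$. Since $|\widetilde B_m|$ is $1$-periodic, $\int_0^1|\widetilde B_m(x-pt)|\,dt$ is the average of $|\widetilde B_m|$ over the length-$p$ interval $[x-p,x]$, hence equals $\int_0^1|B_m(v)|\,dv$. Plugging in Proposition~\ref{pr44}\,\itemref{pr441}, i.e. $\int_0^1|B_m|<16\,m!/(2\pi)^{m+1}$, and simplifying $\frac{16}{(2\pi)^{m+1}}=\frac{8}{\pi(2\pi)^m}$, yields the stated inequality.

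For~\itemref{th633} I would write~\itemref{th631} as $p^m\mathcal E(p,m,f;x)=\frac{1}{m!}\int_0^1 g(pt)f^{(m)}(t)\,dt$ with $g(s)=\widetilde B_m(x-s)$, which is a $1$-periodic piecewise continuous function, while $f^{(m)}$ is continuous hence integrable; Lemma~\ref{lm62} then gives the limit $\frac{1}{m!}\bigl(\int_0^1 g\bigr)\bigl(\int_0^1 f^{(m)}\bigr)$, and $\int_0^1 g(s)\,ds=\int_0^1 B_m(v)\,dv=0$ by Corollary~\ref{cor11}, so the limit is $0$. I expect the only genuine work to be the change of variables, the periodicity reduction, and the telescoping bookkeeping in~\itemref{th631}; everything after that is routine.
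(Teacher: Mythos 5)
Your proof is correct and follows essentially the same route as the paper: Proposition~\ref{pr61} for part~\itemref{th631}, Proposition~\ref{pr44}~\itemref{pr441} for part~\itemref{th632}, and Lemma~\ref{lm62} for part~\itemref{th633}. The only (cosmetic) difference is in part~\itemref{th631}: the paper applies Proposition~\ref{pr61} once to the averaged function $x\mapsto\mathcal{H}_p(f;x)$ and then identifies the resulting terms, whereas you apply it to each rescaled piece $u\mapsto f\bigl(\tfrac{k+u}{p}\bigr)$ and sum over $k$; both reduce to the same change of variables, periodicity argument, and telescoping bookkeeping.
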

\begin{proof}
 Applying Proposition \ref{pr61} to the function $x\mapsto\mathcal{H}_p(f;x)$ we obtain
\begin{equation*}\label{E:star}
\int_0^1\mathcal{H}_p(f;t)\,dt-\mathcal{H}_p(f;x)+\sum_{k=0}^{m-1}\frac{B_{k+1}(x)}{(k+1)!}\,\delta \mathcal{H}^{(k)}_p(f;\cdot)=\frac{1}{m!}
\int_0^1\widetilde{B}_m(x-t) \mathcal{H}^{(m)}_p(f;t)\,dt.\tag{$*$}
\end{equation*}
But
\begin{equation*}
\int_0^1\mathcal{H}_p(f;t)\,dt =\frac{1}{p}\sum_{k=0}^{p-1}\int_0^1f\left(\frac{k+t}{p}\right)\,dt 
 =\sum_{k=0}^{p-1}\int_{k/p}^{(k+1)/p}f(u)du=\int_0^1f(u)\,du.
\end{equation*}
Also,
\[
\mathcal{H}^{(k)}_p(f;x)=\frac{1}{p^{k+1}}\sum_{k=0}^{p-1}f^{(k)}\left(\frac{k+x}{p}\right)=\frac{1}{p^k}\mathcal{H}_p(f^{(k)};x).
\]
Thus
\begin{align*}
\delta \mathcal{H}^{(k)}_p(f;\cdot)&=\frac{1}{p^{k+1}}\left(\sum_{k=0}^{p-1}f^{(k)}\left(\frac{k+1}{p}\right)
-\sum_{k=0}^{p-1}f^{(k)}\left(\frac{k}{p}\right)\right),\\
&=\frac{f^{(k)}(1)-f^{(k)}(0)}{p^{k+1}}=\frac{1}{p^{k+1}}\delta f^{(k)}.
\end{align*}
Replacing the above results in \eqref{E:star} we conclude that
\begin{equation*}
\mathcal{E}(p,m,f;x)=\frac{1}{m!\cdot p^m}\int_0^1\widetilde{B}_m(x-t)\mathcal{H}_p(f^{(m)};t)\,dt,
\end{equation*}
and \itemref{th631} follows, since
\begin{align*}
\int_0^1\widetilde{B}_m(x-t)\mathcal{H}_p(f^{(m)};t)\,dt&=
\frac{1}{p}\sum_{k=0}^{p-1}\int_0^1\widetilde{B}_m(x-t)f^{(m)}\left(\frac{k+t}{p}\right)\,dt\\
&=\sum_{k=0}^{p-1}\int_{k/p}^{(k+1)/p}\widetilde{B}_m(x+k-pt)f^{(m)}(t)\,dt\\
&=\sum_{k=0}^{p-1}\int_{k/p}^{(k+1)/p}\widetilde{B}_m(x-pt)f^{(m)}(t)\,dt\\
&=\int_{0}^{1}\widetilde{B}_m(x-pt)f^{(m)}(t)\,dt
\end{align*}
\qquad Using \itemref{th631}, and recalling that $\widetilde{B}_m$ is $1$-periodic, we see that
\begin{align*}
\abs{\mathcal{E}(p,m,f;x)}&\leq\frac{1}{m!\cdot p^m}\int_{0}^{1}\abs{\widetilde{B}_m(x-pt)}\cdot\abs{f^{(m)}(t)}\,dt\\
&\leq\frac{1}{m!\cdot p^m}\sup_{t\in[0,1]}\abs{f^{(m)}(t)}\cdot\int_{0}^{1}\abs{\widetilde{B}_m(x-pt)}\,dt\\
&\leq\frac{1}{m!\cdot p^{m}}\sup_{t\in[0,1]}\abs{f^{(m)}(t)}\cdot\frac{1}{p}\int_{0}^{p}\abs{\widetilde{B}_m(x-u)}\,du\\
&=\frac{1}{m!\cdot p^{m}}\sup_{t\in[0,1]}\abs{f^{(m)}(t)}\cdot\int_{0}^{1}\abs{\widetilde{B}_m(t)}\,dt,
\end{align*}
and \itemref{th632} follows using Proposition~\ref{pr44} \itemref{pr441}.\bg
\qquad Finally, applying Lemma \ref{lm62} to the $1$-periodic function $u\mapsto g(t)=\widetilde{B}_m(x-t)$ and
the integrable function $t\mapsto h(t)=f^{(m)}(t)$ we obtain \itemref{th633} because $\int_0^1g=0$  in this case.
\end{proof}
\bg
\section{Asymptotic expansions for numerical quadrature formul\ae}\label{sec7}
\bn
\qquad In this section we only consider functions defined on the intervall $[0,1]$. The more general
case of a functions defined on $[a,b]$ can be obtained by applying the  results after using
the change of variable $t\mapsto  a+t(b-a)$. \bg
\qquad We consider a function $f:[0,1]\vers\comp$, having a continuous $m^{\text{th}}$ derivative, with $m\geq2$, 
and we will use freely the notation of the previous section.
\bg
\subsection{Riemann sums.} \label{sbsec71} The Riemann sum of $f$ obtained by taking the values of the function $f$ at the lower bound of each subdivision interval, is given by
\begin{equation}\label{E:RL1}
\mathcal{R}_p^L(f)=\frac{1}{p}\sum_{k=0}^{p-1}f\left(\frac{k}{p}\right).
\end{equation}
 According to Theorem \ref{th63} we have $\mathcal{R}_p^L(f)=\mathcal{H}_p(f,0)$, so
\begin{equation}\label{E:RL2}
\int_0^1f(t)dt=\mathcal{R}_p^L(f)+\frac{\delta f}{2p}-\sum_{1\leq k\leq \frac{m}{2}}\frac{b_{2k}}{(2k)!\cdot p^{2k}}\cdot\delta f^{(2k-1)}+
\mathcal{E}(p,m,f;0)
\end{equation}
\qquad Similarly, the Riemann sum of $f$ obtained by taking the values of the function $f$ at the upper bound
of each subdivision interval, is given by
\begin{equation}\label{E:RR1}
\mathcal{R}_p^R(f)=\frac{1}{p}\sum_{k=1}^{p}f\left(\frac{k}{p}\right)
\end{equation}
 And again using Theorem \ref{th63} we have $\mathcal{R}_p^R(f)=\mathcal{H}_p(f,1)$, so
\begin{equation}\label{E:RR2}
\int_0^1f(t)dt=\mathcal{R}_p^R(f)-\frac{\delta f}{2p}-\sum_{1\leq k\leq \frac{m}{2}}\frac{b_{2k}}{(2k)!\cdot p^{2k}}\cdot\delta f^{(2k-1)}+
\mathcal{E}(p,m,f;0)
\end{equation}
where we noted that $\mathcal{E}(p,m,f;1)=\mathcal{E}(p,m,f;0)$, since $\widetilde{B}_m$ is $1$-periodic.
\bg
\qquad Also, the Riemann sum of $f$ obtained by taking the values of the function $f$ at the midpoint
of each subdivision interval, is given by
\begin{equation}\label{E:RM1}
\mathcal{R}_p^M(f)=\frac{1}{p}\sum_{k=0}^{p-1}f\left(\frac{2k+1}{2p}\right)
\end{equation}
 This is the ``Midpoint Quadrature Rule''. By Theorem \ref{th63} we have $\mathcal{R}_p^M(f)=\mathcal{H}_p\left(f,\frac12\right)$, so
\begin{equation}\label{E:RM2}
\int_0^1f(t)dt=\mathcal{R}_p^M(f)-\sum_{1\leq k\leq \frac{m}{2}}\frac{(2^{1-2k}-1)b_{2k}}{(2k)!\cdot p^{2k}}\cdot\delta f^{(2k-1)}+
\mathcal{E}\left(p,m,f;\frac12\right)
\end{equation}
\qquad For example, taking $m=2$, we obtain from Theorem \ref{th63} \eqref{th633}:
\begin{equation}\label{E:RM3}
\lim_{p\to\infty} p^2\left(\int_0^1f(t)dt-\mathcal{R}_p^M(f)\right)=\frac{f'(1)-f'(0)}{24}.
\end{equation}
Thus, the midpoint quadrature rule is a second order rule.
\bg
\subsection{The trapezoidal rule.} \label{sbsec72} Taking the half sum of $\mathcal{R}_p^L(f)$ and $\mathcal{R}_p^R(f)$ we obtain the
trapesoidal rule that corresponds to approximating $f$ linearly on each interval of the subdivision.
\begin{equation}\label{E:T1}
\mathcal{T}_p(f)=\frac{1}{2}\left(\mathcal{R}_p^L(f)+\mathcal{R}_p^R(f)\right)=\frac{f(0)+f(1)}{2p}+
\frac{1}{p}\sum_{k=1}^{p-1}f\left(\frac{k}{p}\right)
\end{equation}
Using \eqref{E:RL2} and \eqref{E:RR2} we see that
\begin{equation}\label{E:T2}
\int_0^1f(t)dt=\mathcal{T}_p(f)-\sum_{1\leq k\leq \frac{m}{2}}\frac{b_{2k}}{(2k)!\cdot p^{2k}}\cdot\delta f^{(2k-1)}+\mathcal{E}(p,m,f;0)
\end{equation}
In particular, choosing $m=2$, we obtain from Theorem \ref{th63} \eqref{th633}:
\begin{equation}\label{E:T3}
\lim_{p\to\infty} p^2\left(\int_0^1f(t)dt-\mathcal{T}_p(f)\right)=-\frac{f'(1)-f'(0)}{12}.
\end{equation}
Thus, the trapezoidal quadrature rule is a second order rule.
\bg
\qquad In fact, for the case of the trapezoidal rule we have a more refined result in some cases. This is the object of the following proposition.
\bg
\begin{proposition}\label{pr71}
Consider a positive integer $m$, and a function $f$ that has a  continuous $(2m-1)^{\text{st}}$ derivative on $[0,1]$. If
 $f^{(2m-1)}$ is {\normalfont \text{decreasing}}, then, for every positive integer $p$ we have
\[
\int_0^1f(t)\,dt=\mathcal{T}_p(f)-
\sum_{k=1}^{m-1}\frac{b_{2k}}{(2k)!\cdot p^{2k}}\cdot\delta f^{(2k-1)}+(-1)^{m+1}R_{m,p}
\]
with
\[
0\leq R_{m,p}\leq\frac{6}{(2\pi p)^{2m}}\left(f^{(2m-1)}(0)-f^{(2m-1)}(1)\right).
\]
\end{proposition}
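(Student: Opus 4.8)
The plan is to deduce Proposition~\ref{pr71} from Corollary~\ref{cor61} by splitting $[0,1]$ into the $p$ subintervals $[k/p,(k+1)/p]$, $0\le k\le p-1$, and rescaling each of them to $[0,1]$. Concretely, for each $k$ set $g_k(s)=f\!\left(\frac{k+s}{p}\right)$ for $s\in[0,1]$. Then $g_k$ inherits a continuous $(2m-1)^{\text{st}}$ derivative from $f$, we have $g_k^{(j)}(s)=p^{-j}f^{(j)}\!\left(\frac{k+s}{p}\right)$, so $g_k^{(2m-1)}$ is decreasing because $f^{(2m-1)}$ is, and a change of variable gives $\int_{k/p}^{(k+1)/p}f(t)\,dt=\frac1p\int_0^1 g_k(s)\,ds$. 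Thus $\int_0^1 f=\frac1p\sum_{k=0}^{p-1}\int_0^1 g_k$.

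First I would apply Corollary~\ref{cor61} to each $g_k$, which is legitimate by the previous remark, obtaining
\[
\int_0^1 g_k(s)\,ds=\frac{g_k(0)+g_k(1)}{2}-\sum_{j=1}^{m-1}\frac{b_{2j}}{(2j)!}\,\delta g_k^{(2j-1)}+(-1)^{m+1}R_m^{(k)},
\]
with $0\le R_m^{(k)}\le\frac{6}{(2\pi)^{2m}}\bigl(g_k^{(2m-1)}(0)-g_k^{(2m-1)}(1)\bigr)$. Multiplying by $1/p$ and summing over $k=0,\dots,p-1$ yields
\[
\int_0^1 f(t)\,dt=\frac1p\sum_{k=0}^{p-1}\frac{g_k(0)+g_k(1)}{2}-\sum_{j=1}^{m-1}\frac{b_{2j}}{(2j)!}\left(\frac1p\sum_{k=0}^{p-1}\delta g_k^{(2j-1)}\right)+\frac{(-1)^{m+1}}{p}\sum_{k=0}^{p-1}R_m^{(k)},
\]
and it remains to identify the three pieces.

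The main term is $\frac1p\sum_{k=0}^{p-1}\frac{f(k/p)+f((k+1)/p)}{2}$, which equals $\mathcal{T}_p(f)$ after regrouping the interior points, by the definition \eqref{E:T1}. For the correction terms, $\delta g_k^{(2j-1)}=g_k^{(2j-1)}(1)-g_k^{(2j-1)}(0)=p^{-(2j-1)}\bigl(f^{(2j-1)}((k+1)/p)-f^{(2j-1)}(k/p)\bigr)$, so $\frac1p\sum_{k=0}^{p-1}\delta g_k^{(2j-1)}$ telescopes to $p^{-2j}\bigl(f^{(2j-1)}(1)-f^{(2j-1)}(0)\bigr)=p^{-2j}\,\delta f^{(2j-1)}$, producing exactly $\sum_{j=1}^{m-1}\frac{b_{2j}}{(2j)!\,p^{2j}}\,\delta f^{(2j-1)}$. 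Finally put $R_{m,p}=\frac1p\sum_{k=0}^{p-1}R_m^{(k)}$, which is nonnegative since each $R_m^{(k)}\ge 0$; and since $g_k^{(2m-1)}(0)-g_k^{(2m-1)}(1)=p^{-(2m-1)}\bigl(f^{(2m-1)}(k/p)-f^{(2m-1)}((k+1)/p)\bigr)$, summing the upper bounds for $R_m^{(k)}$ gives another telescoping sum, hence $R_{m,p}\le\frac{6}{(2\pi)^{2m}p^{2m}}\bigl(f^{(2m-1)}(0)-f^{(2m-1)}(1)\bigr)=\frac{6}{(2\pi p)^{2m}}\bigl(f^{(2m-1)}(0)-f^{(2m-1)}(1)\bigr)$, as claimed.

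There is no serious obstacle here: every hypothesis of Corollary~\ref{cor61} transfers verbatim to the rescaled functions $g_k$, and everything else is bookkeeping. The only points demanding a little care are tracking the powers of $p$ introduced by $g_k^{(j)}=p^{-j}f^{(j)}(\cdot)$, and checking that the two cancellations—of the derivative corrections and of the remainder bounds—are genuinely telescoping, which holds because consecutive subintervals share an endpoint.
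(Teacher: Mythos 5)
Your proof is correct and follows essentially the same route as the paper: both apply Corollary~\ref{cor61} to the rescaled functions $x\mapsto f\bigl(\frac{k+x}{p}\bigr)$ on each subinterval, then sum and let the derivative corrections and remainder bounds telescope. The bookkeeping of the powers of $p$ and the verification that each rescaled function still has a decreasing $(2m-1)^{\text{st}}$ derivative are handled exactly as in the paper's argument.
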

\begin{proof}
Our starting point will be Corollary \ref{cor61} applied to the function 
$x\mapsto f\left(\frac{j+x}{p}\right)$, with $0\leq j<p$. It follows that
\begin{align*}
p\int_{j/p}^{(j+1)/p}f(t)\,dt&=\frac{1}{2}\left(f\left(\tfrac{j+1}{p}\right)+f\left(\tfrac{j}{p}\right)\right)\\
&{}-\sum_{k=1}^{m-1}\frac{b_{2k}}{(2k)!p^{2k-1}}\,\left(f^{(2k-1)}\left(\tfrac{j+1}{p}\right)-f^{(2k-1)}\left(\tfrac{j}{p}\right)\right)+(-1)^{m+1}R_{m,p,j} 
\end{align*}
with
\begin{equation*}\label{E:T4}
0\leq R_{m,p,j}\leq\frac{6}{(2\pi)^{2m}p^{2m-1}}\left(f^{(2m-1)}\left(\tfrac{j}{p}\right)-f^{(2m-1)}\left(\tfrac{j+1}{p}\right)\right).\tag{*}
\end{equation*}
Adding these inequalities, for $0\leq j<p$, and recalling \eqref{E:T1} we see that
\[
\int_0^1f(t)\,dt=\mathcal{T}_p(f)
-\sum_{k=1}^{m-1}\frac{b_{2k}}{(2k)!p^{2k}}\,\delta f^{(2k-1)}+(-1)^{m+1}R_{m,p}
\]
with $R_{m,p}=\frac{1}{p}\sum_{j=0}^{p-1}R_{m,p,j}$. Now, using \eqref{E:T4} we get
\[
0\leq R_{m,p} \leq\frac{6}{(2\pi p)^{2m}}\left(f^{2m-1}(0)-f^{(2m-1}(1)\right)
\]
which is the desired conclusion.
\end{proof}
\bg
\begin{application} \label{ap7}\textbf{An asymptotic expansion for a trigonometric sum.}\medskip\nobreak
\qquad For a positive integer $p$,  we consider the trigonometric sum
\[J_p=\sum_{j=1}^{p-1}j\cot\left(\frac{j\pi}{p}\right).\]
This sum will be studied in detail later, but we want here to illustrate the use of the Proposition~\ref{pr71}. \bg
\begin{proposition}\label{pr72} For every positive integers $p$ and $m$, there is a real number $\theta_{p,m}$ such that
\[
J_p=-\frac{1}{\pi} p^2H_p+\frac{\ln(2\pi)}{\pi}p^2-\frac{p}{2\pi}-
\sum_{k=1}^{m-1}\frac{b_{2k}(1+2\zeta(2k))}{2\pi k\cdot p^{2k-2}}+\frac{(-1)^{m}}{p^{2m-2}}\theta_{p,m}
\]
and
\[0<\theta_{p,m}<\dfrac{\abs{b_{2m}}(1+2\zeta(2m))}{2\pi  m},\]
where
$H_p=\sum_{j=1}^{p}1/j$ is the $p^{\text{th}}$ harmonic number.
\end{proposition}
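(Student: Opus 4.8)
The plan is to realise $J_p$ as a constant multiple of a trapezoidal quadrature sum and then to invoke Proposition~\ref{pr71}. Writing $J_p=\frac{p}{\pi}\sum_{j=1}^{p-1}g\!\left(\frac jp\right)$ with $g(t)=\pi t\cot(\pi t)$, one notices that $g$ is real--analytic on $[0,1)$ but has a simple pole at $t=1$ whose residue is $1$; hence
\[
h(t):=\pi t\cot(\pi t)+\frac{1}{1-t}=3-\frac{1}{t+1}+\sum_{k\geq 2}\frac{2t^{2}}{t^{2}-k^{2}}
\]
(the second form following from the partial--fraction expansion \eqref{E:cotf}) extends to a function of class $C^{\infty}$ on $[0,1]$. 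Using $\sum_{j=1}^{p-1}\frac{1}{1-j/p}=p\sum_{i=1}^{p-1}\frac1i=pH_p-1$, together with $h(0)=2$, $h(1)=1$ and $\mathcal{T}_p(h)=\frac{h(0)+h(1)}{2p}+\frac1p\sum_{j=1}^{p-1}h(j/p)$, I would arrive at the clean reduction
\[
J_p=\frac{p^{2}}{\pi}\,\mathcal{T}_p(h)-\frac{p^{2}}{\pi}\,H_p-\frac{p}{2\pi}.
\]

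The next step is to collect the data needed to apply Proposition~\ref{pr71} to $h$. An integration by parts, $\int \pi t\cot(\pi t)\,dt=t\ln\sin(\pi t)-\int\ln\sin(\pi t)\,dt$, together with the classical value $\int_0^{1}\ln\sin(\pi t)\,dt=-\ln 2$, gives $\int_0^{1}h(t)\,dt=\ln(2\pi)$. Differentiating the series for $h$ term by term (legitimate on $[0,1]$ for orders $\geq 1$) yields, for $n\geq 1$,
\[
h^{(n)}(t)=\frac{(-1)^{n+1}n!}{(t+1)^{n+1}}+(-1)^{n}n!\sum_{k\geq 2}k\left(\frac{1}{(t-k)^{n+1}}-\frac{1}{(t+k)^{n+1}}\right),
\]
from which I read off: $h^{(2m)}(t)<0$ and $h^{(2m+2)}(t)<0$ on $(0,1)$ for every $m$, so $h^{(2m-1)}$ and $h^{(2m+1)}$ are strictly decreasing on $[0,1]$; and $h^{(2k-1)}(0)=(2k-1)!$, while a short telescoping of the series at $t=1$ gives $h^{(2k-1)}(1)=-2(2k-1)!\,\zeta(2k)$, so $\delta h^{(2k-1)}=-(2k-1)!\,(1+2\zeta(2k))$. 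Since $h^{(2m-1)}$ is decreasing, Proposition~\ref{pr71} (parameter $m$) applies, and on using $\frac{b_{2k}}{(2k)!\,p^{2k}}\,\delta h^{(2k-1)}=-\frac{b_{2k}(1+2\zeta(2k))}{2k\,p^{2k}}$ the reduction above becomes exactly the stated identity with $\theta_{p,m}=\frac{p^{2m}}{\pi}\,R_{m,p}$, $R_{m,p}$ being the remainder of that proposition. Strict monotonicity of $h^{(2m-1)}$ forces $R_{m,p}>0$, hence $\theta_{p,m}>0$.

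The main obstacle is the sharp upper bound $\theta_{p,m}<\frac{\abs{b_{2m}}(1+2\zeta(2m))}{2\pi m}$, because the estimate printed in Proposition~\ref{pr71} (with its constant $6$) is weaker than this by a bounded factor. To get the right constant I would return to the \emph{exact} remainder used in the proof of Proposition~\ref{pr71} (Corollary~\ref{cor61} applied on each subinterval) and, after the substitution $s=\frac{j+\sigma}{p}$, rewrite it as
\[
p^{2m}R_{m,p}=\int_0^{1}\abs{h^{(2m)}(s)}\,\widetilde{\Theta}(ps)\,ds,\qquad
\Theta(\sigma):=\frac{1}{(2m-1)!}\int_0^{\min(\sigma,\,1-\sigma)}\abs{B_{2m-1}(t)}\,dt,
\]
with $\widetilde{\Theta}$ the $1$--periodic extension of $\Theta$ (continuous, since $\Theta(0)=\Theta(1)=0$). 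The two relevant moments are $\int_0^{1}\abs{h^{(2m)}(s)}\,ds=\abs{\delta h^{(2m-1)}}=(2m-1)!\,(1+2\zeta(2m))$ and, by integration by parts together with $\int_0^{1/2}B_{2m}(t)\,dt=\frac{B_{2m+1}(1/2)-B_{2m+1}(0)}{2m+1}=0$ (because $B_{2m+1}(0)=B_{2m+1}(\tfrac12)=0$),
\[
\int_0^{1}\Theta(\sigma)\,d\sigma=\frac{1}{(2m-1)!}\int_0^{1/2}\abs{B_{2m-1}(t)}\,(1-2t)\,dt=\frac{\abs{b_{2m}}}{2m\,(2m-1)!}.
\]
Their product is $\frac{\abs{b_{2m}}(1+2\zeta(2m))}{2m}$, which Lemma~\ref{lm62} identifies with $\lim_{p\to\infty}p^{2m}R_{m,p}$; and for the strict inequality at finite $p$ I would split $\widetilde{\Theta}(ps)=\int_0^{1}\Theta+\bigl(\widetilde{\Theta}(ps)-\int_0^{1}\Theta\bigr)$ and, after one more integration by parts, reduce the claim to $\int_0^{1}\Phi(ps)\,h^{(2m+1)}(s)\,ds<0$, where $\Phi$ is the primitive of $\widetilde{\Theta}-\int_0^{1}\Theta$ normalised by $\Phi(0)=0$. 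The function $\Phi$ is $1$--periodic with $\int_0^{1}\Phi=0$, is $\leq 0$ on $[0,\tfrac12]$ and $\geq 0$ on $[\tfrac12,1]$, and is antisymmetric about $\tfrac12$; combining this with the strict monotonicity of $h^{(2m+1)}$, a comparison on each period $[\tfrac jp,\tfrac{j+1}p]$ — bounding $h^{(2m+1)}$ by its value at the midpoint of that period — makes each period--integral strictly negative. This last monotonicity/sign argument, together with the vanishing $\int_0^{1/2}B_{2m}=0$ that pins the limiting constant to $\frac{\abs{b_{2m}}}{2m\,(2m-1)!}$, is the technical heart of the proof.
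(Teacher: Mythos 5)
Your proposal is correct, and its first half coincides with the paper's proof: the same auxiliary function $\vf(x)=\pi x\cot(\pi x)+\frac{1}{1-x}$, the same identity $\mathcal{T}_p(\vf)=H_p+\frac{1}{2p}+\frac{\pi}{p^2}J_p$, the value $\int_0^1\vf=\ln(2\pi)$, the computation $\delta\vf^{(2k-1)}=-(2k-1)!\,(1+2\zeta(2k))$, and the negativity of the even derivatives that makes Proposition~\ref{pr71} applicable. Where you genuinely diverge is on the sharp bound $0<\theta_{p,m}<\frac{\abs{b_{2m}}(1+2\zeta(2m))}{2\pi m}$. The paper gets it almost for free by the enveloping-series device: since the expansion holds for \emph{every} order with a remainder of sign $(-1)^{m}R_{m,p}$ and $R_{m,p}>0$, writing it at orders $m$ and $m+1$ sandwiches the truncation error between $0$ and the first omitted term, which is exactly the stated bound. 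You instead analyse the exact remainder: after Fubini, $p^{2m}R_{m,p}=\int_0^1\abs{\vf^{(2m)}(s)}\,\widetilde{\Theta}(ps)\,ds$, you compute the two moments (the second via $\int_0^{1/2}B_{2m}=0$, correctly giving $\frac{\abs{b_{2m}}}{2m\,(2m-1)!}$), and you control the oscillatory part by one integration by parts plus a midpoint comparison on each period, using the strict monotonicity of $\vf^{(2m+1)}$ and the antisymmetry of the primitive $\Phi$ about $\tfrac12$. I checked the details (the sign of $\Phi$, the vanishing of $\int_0^1\Phi$, and the per-period estimate) and they are sound. Your route is considerably heavier, but it buys something the paper's trick does not give directly: the identification of $\lim_{p\to\infty}p^{2m}R_{m,p}$ with $\frac{\abs{b_{2m}}(1+2\zeta(2m))}{2m}$, i.e.\ a proof that the constant in the bound is optimal. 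If you only need the stated proposition, the two-consecutive-orders argument is the short path; it is worth knowing, since it applies verbatim whenever all the odd derivatives $\vf^{(2k-1)}$ are monotone in the same sense.
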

\begin{proof}
Indeed, let $\vf$ be the function defined by 
\[\vf(x)=\pi x\cot(\pi x)+\frac{1}{1-x}.\]
According to formula \eqref{E:cotf} we know that 
\[
\vf(x)=2+\frac{x}{x+1}+\sum_{n=2}^\infty\left(\frac{x}{x-n}+\frac{x}{x+n}\right).
\]
Thus, $\vf$ is defined and analytic on the interval $(-1,2)$. Let us show that, for every positive integer $k$, the derivative
$\vf^{(2k)}$ is negative on the interval $[0,1]$. To this end, we note, using \eqref{E:cots}, that
\begin{align*}
\vf(x)&=\pi x\cot(\pi x)+\frac{2}{1-x^2}-\frac{1}{1+x}\\
&=3-\frac{1}{1+x}-2\sum_{n=1}^\infty (\zeta(2n)-1)x^{2n}.
\end{align*}
Hence,
\[
\frac{\vf^{(2k)}(x)}{(2k)!}=-\frac{1}{(1+x)^{2k+1}}-2\sum_{n=k}^\infty \com{2n}{2k}(\zeta(2n)-1)x^{2n-2k},
\]
which is clearly negative on $[0,1]$.\bg
Now, we can apply Proposition~\ref{pr71} to $\vf$. We only need to calculate $\delta \vf^{(2k-1)}$ for every $k$. Note that
\[
\vf(x)+\vf(1-x)=\frac{1-\pi\cot(\pi x)}{x} +2\pi x\cot(\pi x)+\frac{1}{1-x}
\]
Thus, using \eqref{E:cots} again we obtain, for $\abs{x}<1$,
\[
\vf(x)+\vf(1-x)=3+ \sum_{n=1}^\infty(2\zeta(2n)+1)x^{2n-1}+\sum_{n=1}^\infty(1-4\zeta(2n))x^{2n}
\]
Taking the $(2k-1)^{\text{st}}$ derivative at $x=0$, we get
\[
\frac{\delta \vf^{2k-1}}{(2k-1)!}=-1-2\zeta(2k).
\]
So, applying Proposition~\ref{pr71}, we obtain
\begin{equation*}\label{E:pr921}
\int_0^1\vf(t)\,dt=\mathcal{T}_p(\vf)+\sum_{k=1}^{m-1}\frac{b_{2k}(1+2\zeta(2k))}{2k\cdot p^{2k}}+(-1)^{m+1}R_{m,p}\tag{*}
\end{equation*}
with
\[
0\leq R_{m,p}\leq\frac{6(2m-1)!\cdot (1+2\zeta(2m))}{(2\pi p)^{2m}}
\]
But
\begin{align*}
\mathcal{T}_p(\vf)&=\frac{\vf(0)+\vf(1)}{2p}+\frac{1}{p}\sum_{j=1}^{p-1}\vf\left(\frac{j}{p}\right)\\
&=\frac{3}{2p}+\frac{\pi}{p^2}\sum_{j=1}^{p-1} j\cot\left(\frac{\pi j}{p}\right)+ \sum_{j=1}^{p-1}\frac{1}{p-j}=H_p+\frac{1}{2p}+\frac{\pi}{p^2}J_p
\end{align*}
Also, for $x\in[0,1)$, we have
\[
\int_0^x\vf(t)\,dt=-\ln(1-x)+x\ln \sin(\pi x)-\int_0^x \ln\sin(\pi t)\,dt
\]
and, letting $x$ tend to $1$ we obtain
\[
\int_0^1\vf(t)\,dt=\ln(\pi)-\int_0^1\ln\sin(\pi t)\,dt=\ln(2\pi)
\]
where we used the fact   $\int_0^1\ln\sin(\pi t)\,dt=-\ln2$, (see \cite[4.224 Formula 3.]{grad}. Thus
\eqref{E:pr921} is equivalent to
\[
\ln(2\pi) =H_{p}+\frac{1}{2p}+\frac{\pi}{p^2} J_p+\sum_{k=1}^{m-1}\frac{b_{2k}(1+2\zeta(2k))}{2k\cdot p^{2k}}+(-1)^{m+1}R_{m,p}
\]
or
\[\frac{\pi}{p^2} J_p=\ln(2\pi) -H_{p}-\frac{1}{2p}-\sum_{k=1}^{m-1}\frac{b_{2k}(1+2\zeta(2k))}{p^{2k}}+(-1)^{m}R_{m,p}
\]
Thus, we have shown that for every nonnegative integer $m$ we have
\begin{align*}
\frac{\pi}{p^2} J_p&<\ln(2\pi) -H_{p}-\frac{1}{2p}-\sum_{k=1}^{2m}\frac{b_{2k}(1+2\zeta(2k))}{2k\cdot p^{2k}}\\
\noalign{\text{and}}
\frac{\pi}{p^2} J_p&>\ln(2\pi) -H_{p}-\frac{1}{2p}-\sum_{k=1}^{2m+1}\frac{b_{2k}(1+2\zeta(2k))}{2k\cdot p^{2k}}
\end{align*}
So, for every positive integer $m$ we have, 
\[
0<(-1)^{m}\left( J_p-\frac{p^2}{\pi}(\ln(2\pi) -H_{p})+\frac{p}{2\pi}+\sum_{k=1}^{m-1}\frac{b_{2k}(1+2\zeta(2k))}{2\pi k\cdot p^{2k-2}}\right)<\frac{\abs{b_{2m}}(1+2\zeta(2m))}{2\pi m\cdot p^{2m-2}}.
\]
Which is the desired conclusion.
\end{proof}
\bg
\qquad The result of this proposition is not completely satisfactory, because of the sum $H_p$.  That is why it is just the beginning of the story! It will be 
pursued in a later section.\bg
\end{application}

\subsection{Simpson's  rule.} \label{sbsec73} Comparing \eqref{E:T3} and \eqref{E:RM3} we see that 
\[\lim_{p\to\infty}p^2\left(\int_0^1f-\frac{\mathcal{T}_p(f)+2\mathcal{R}_p^M(f)}{3} \right)=0,\]
so the quantity
$\frac13\left(\mathcal{T}_p(f)+2\mathcal{R}_p^M(f)\right)$ is a better quarature rule than the second order ones. Hence, let us define
the ``Simpson quadrature rule'' by
\begin{align}\label{E:S1}
\mathcal{S}_p(f)&=\frac{1}{3}\left(\mathcal{T}_p^L(f)+2\mathcal{R}_p^M(f)\right)\notag\\
&=\frac{1}{6p}\sum_{k=1}^{p-1}\left(f\left(\frac{2k}{2p}\right)+f\left(\frac{2k+1}{2p}\right)+f\left(\frac{2k+2}{2p}\right)\right)
\end{align}
Using \eqref{E:T2} and \eqref{E:RM2} we see that
\begin{equation}\label{E:S2}
\int_0^1f(t)dt=\mathcal{S}_p(f)+\sum_{2\leq k\leq \frac{m}{2}}\frac{(1-4^{1-k})b_{2k}}{3(2k)!\cdot p^{2k}}\cdot\delta f^{(2k-1)}+
\mathcal{E}^S(p,m,f)
\end{equation}
with
\[
\mathcal{E}^S(p,m,f)=\frac{1}{3}\left(\mathcal{E}(p,m,f;0)+2\mathcal{E}\left(p,m,f;\frac12\right)\right)
\]
Using Theorem \ref{th63} we see that
\[
\mathcal{E}^S(p,m,f) \leq\frac{8}{\pi}\cdot\frac{1}{(2p)^{2m}}\cdot\sup_{[0,1]}\abs{f^{(2m)}}
\quad\text{and}\quad \lim_{p\to\infty}p^{2m}\cdot\mathcal{E}^S(p,m,f)=0
\]
In particular, choosing $m=4$, we obtain :
\begin{equation}\label{E:S3}
\lim_{p\to\infty} p^4\left(\int_0^1f(t)dt-\mathcal{S}_p(f)\right)=-\frac{f^{(3)}(1)-f^{(3)}(0)}{2880}.
\end{equation}
Thus, the Simpson quadrature rule is a forth order rule.
\bg
\subsection{The two point Gauss rule.} \label{sbsec74} Applying Theorem~\ref{th63} at $x$ and $1-x$ and using
Proposition~\ref{pr21} \itemref{pr212}, we obtain after taking the half sum:
\[
\int_0^1f(t)dt=\frac{1}{2p}\sum_{k=0}^{p-1}\left(f\left(\tfrac{k+x}{p}\right)+f\left(\tfrac{k+1-x}{p}\right)\right)
-\sum_{1\leq k\leq\frac{m}{2}}\frac{B_{2k}(x)}{(2k)!\,p^{2k}}\delta f^{(2k-1)}+\widetilde{\mathcal{E}}(p,m,f;x)
\]
with
\[
\widetilde{\mathcal{E}}(p,m,f;x)=\frac12\left(\mathcal{E}(p,m,f;x)+\mathcal{E}(p,m,f;1-x)\right).
\]
Here $\widetilde{\mathcal{E}}$ satisfies the same properties as $\mathcal{E}$ in Theorem~\ref{th63}.
The case $x=0$ corresponds to the trapezoidal rule, and the case $x=\frac12$ corresponds the midpoint point rule.
But the best choice for $x$ is when $x=\alpha=\frac12-\frac{1}{\sqrt{12}}$ which is a zero of $B_2$. Then, we obtain
``the two point Gauss quadrature rule'':
\begin{equation}\label{E:G1}
\mathcal{G}_p(f)=\frac{1}{2p}\sum_{k=0}^{p-1}\left(f\left(\frac{k+\frac{1}{2}-\frac{1}{\sqrt{12}}}{p}\right)+
f\left(\frac{k+\frac{1}{2}+\frac{1}{\sqrt{12}}}{p}\right)\right)
\end{equation}
with
\begin{equation}\label{E:G2}
\int_0^1f(t)dt=\mathcal{G}_p(f)-\sum_{2\leq k\leq\frac{m}{2}}\frac{B_{2k}(\alpha)}{(2k)!\,p^{2k}}\delta f^{(2k-1)}+\widetilde{\mathcal{E}}(p,m,f;\alpha)
\end{equation}
For example, with $m=4$ we find that
\begin{equation}\label{E:G3}
\lim_{p\to\infty}p^4\left(\int_0^1f(t)dt-\mathcal{G}_p(f)\right)=\frac{f^{(3)}(1)-f^{(3)}(0)}{4320}
\end{equation}
Thus, the two point Gauss quadrature rule is a forth order rule.
\bg 
\newcommand{\ro}[2]{\,\mathcal{T}^{(#1)}_{#2}(f)}
\newcommand{\roe}[2]{\,\mathcal{E}^{(#1)}_m({#2})}
\subsection{Romberg's rule.} \label{sbsec75} Let us consider again the case of the trapezoidal rule \eqref{E:T1}, and the error asymptotic
expansion:
\begin{equation}\label{E:Ro1}
\int_0^1f(t)dt=\mathcal{T}_p(f)-\sum_{1\leq k\leq \frac{m}{2}}\frac{b_{2k}}{(2k)!\cdot p^{2k}}\cdot\delta f^{(2k-1)}+\mathcal{E}(p,m,f;0)
\end{equation}
We define $\ro{0}{p}=\mathcal{T}_p(f)$ and for simplicity we write $\roe{0}{p}$
for $\mathcal{E}(p,m,f;0)$. Next, we define inductively
\begin{align*}
\ro{\ell}{p}&=\frac{4^\ell \ro{\ell-1}{2p}-\ro{\ell-1}{p}}{4^\ell-1}\\
\roe{\ell}{p}&=\frac{4^\ell \roe{\ell-1}{2p}-\roe{\ell-1}{p}}{4^\ell-1}
\end{align*}
for $\ell=1,2,\ldots$. 
\bg
It is easy to prove by induction, starting from \eqref{E:Ro1} that
\begin{align*}
\int_0^1f(t)dt&=\ro{1}{p}-\sum_{1\leq k\leq \frac{m}{2}}\left(\frac{4^{1-k}-1}{4-1}\right)
\frac{b_{2k}}{(2k)!\cdot p^{2k}}\,\delta f^{(2k-1)}+\roe{1}{p}\\
\int_0^1f(t)dt&=\ro{2}{p}-\sum_{1\leq k\leq \frac{m}{2}}\left(\frac{4^{1-k}-1}{4-1}\right)\left(\frac{4^{2-k}-1}{4^2-1}\right)
\frac{b_{2k}}{(2k)!\cdot p^{2k}}\,\delta f^{(2k-1)}+\roe{2}{p}\\
\vdots\qquad&=\qquad\vdots\qquad\qquad\qquad\qquad\qquad\vdots\\
\int_0^1f(t)dt&=\ro{\ell}{p}-\sum_{1\leq k\leq \frac{m}{2}}\prod_{j=1}^\ell\left(\frac{4^{j-k}-1}{4^j-1}\right)
\frac{b_{2k}}{(2k)!\cdot p^{2k}}\,\delta f^{(2k-1)}+\roe{\ell}{p}
\end{align*}
Note that
\[
\prod_{j=1}^\ell\left(\frac{4^{j-k}-1}{4^j-1}\right)=0\quad\text{for $k=1,2,\ldots,\ell$,}
\]
so, in fact, we have 
\begin{equation}\label{E:Ro3}
\int_0^1f(t)dt=\ro{\ell}{p}-\sum_{\ell< k\leq \frac{m}{2}}\prod_{j=1}^\ell\left(\frac{4^{j-k}-1}{4^j-1}\right)
\frac{b_{2k}}{(2k)!\cdot p^{2k}}\,\delta f^{(2k-1)}+\roe{\ell}{p}
\end{equation}
In order to simplfy a little bit the notation we recall that the finite $q$-Pochhammer $(z;q)_n$ symbol is defined as the product
\[
(z;q)_n=\prod_{k=1}^n\left(1-z q^{k-1}\right).
\]
The limit as $n$ tend to $+\infty$ defines the $q$-Pochhammer symbol $(z;q)_\infty$ when $\abs{q}<1$. Also, we define the 
$q$-binomial coefficient $\com{n}{m}_q$ by the formula
\begin{equation}\label{E:qbin}
\com{n}{m}_q=\frac{(q;q)_n}{(q;q)_{n-m}(q;q)_m},\quad\text{for $0\leq m\leq n$.}
\end{equation}
With this notation we see that for $k>\ell$ and $q=1/4$, we have
\begin{align*}
\prod_{j=1}^\ell\left(\frac{4^{j-k}-1}{4^j-1}\right)
&=\frac{(-1)^\ell}{2^{\ell(1+\ell)}}\prod_{j=1}^\ell\left(\frac{1-q^{k-j}}{1-q^j}\right)\\
&=\frac{(-1)^\ell}{2^{\ell(1+\ell)}}\frac{(q;q)_{k-1}}{(q;q)_{k-1-\ell}(q;q)_\ell}=\frac{(-1)^\ell}{2^{\ell(1+\ell)}}\com{k-1}{\ell}_q
\end{align*}
Thus, we can write \eqref{E:Ro3} as follows
\begin{equation}\label{E:Ro4}
\int_0^1f(t)dt=\ro{\ell}{p}-\frac{(-1)^\ell}{2^{\ell(1+\ell)}}\sum_{\ell< k\leq \frac{m}{2}}\com{k-1}{\ell}_{\frac14}
\,\frac{b_{2k}}{(2k)!\cdot p^{2k}}\,\delta f^{(2k-1)}+\roe{\ell}{p}
\end{equation}
Also, since according to Theorem~\ref{th63}~\itemref{th633}, we have $\lim\limits_{p\to\infty}p^m\roe{0}{p}=0$, we conclude by
induction on $\ell$ that we  have
\[
\lim_{p\to\infty}p^m\roe{\ell}{p}=0,\quad\text{for $\ell=0,1,2,\ldots$.}
\]
In particular, when $m\geq 2\ell+2$ we have
\[
\lim_{p\to\infty}p^{2\ell+2}\left(\int_0^1f(t)dt-
\ro{\ell}{p}\right)=-\frac{\abs{b_{2\ell+2}}}{2^{\ell(1+\ell)}\,(2\ell+2)!}\,\delta f^{(2\ell+1)}.
\]
On the other hand, using Theorem~\ref{th63}~\itemref{th632} we have
\[
\abs{\roe{0}{p}}\leq\frac{8}{(2\pi p)^m}\cdot\sup_{[0,1]}\abs{f^{(m)}}
\]
and, for $\ell=1,2,\ldots$ we have
\[
\abs{\roe{\ell}{p}}\leq\frac{4^\ell \abs{\roe{\ell-1}{2p}}+\abs{\roe{\ell-1}{p}}}{4^\ell-1}
\]
So
\begin{align}
\abs{\roe{1}{p}}&\leq\frac{8}{(2\pi p)^m}\left(\frac{1+4/2^m}{4-1}\right)\cdot\sup_{[0,1]}\abs{f^{(m)}}\notag\\
\abs{\roe{2}{p}}&\leq\frac{8}{(2\pi p)^m}\left(\frac{1+4/2^m}{4-1}\right)\left(\frac{1+4^2/2^m}{4^2-1}\right)
\cdot\sup_{[0,1]}\abs{f^{(m)}}\notag\\
\vdots\qquad&\qquad\qquad\qquad\qquad\qquad \vdots\notag\\
\abs{\roe{\ell}{p}}&\leq\frac{8}{(2\pi p)^m}\prod_{j=1}^{\ell}\left(\frac{1+4^j/2^m}{ 4^j-1}\right)
\cdot\sup_{[0,1]}\abs{f^{(m)}}\label{E:Ro5}
\end{align}
But, for $m\geq 2\ell+2$ we have
\[
\prod_{j=1}^{\ell}\left(1+4^j/2^m\right)\leq\prod_{j=1}^{\ell}\left(1+4^{-(\ell-j+1)}\right)
=\prod_{j=1}^{\ell}\left(1+ 4^{-j}\right)
\]
So
\begin{equation}\label{E:Ro6}
\prod_{j=1}^{\ell}\left(\frac{1+4^j/2^m}{ 4^j-1}\right)\leq\frac{1}{2^{\ell(1+\ell)}}\prod_{j=1}^\ell\frac{1+4^{-j}}{1-4^{-j}}
\end{equation}
Now, since $x\mapsto\ln\left(\frac{1+x}{1-x}\right)$ is convex on the interval $[0,1/4]$ we conclude that
for $x\in[0,1/4]$ we have
\[
\ln\left(\frac{1+x}{1-x}\right)\leq 4\ln\left(\frac{1+4^{-1}}{1-4^{-1}}\right)x=4\ln\left(\frac{5}{3}\right)x
\]
Thus
\[
\sum_{j=1}^\ell\ln\left(\frac{1+4^{-j}}{1-4^{-j}}\right)<4\ln\left(\frac{5}{3}\right)\sum_{j=1}^\infty\frac{1}{4^j}=\frac{4}{3}\ln\left(\frac{5}{3}\right)
\]
Finally, since $(5/3)^{4/3}<2$ we obtain from \eqref{E:Ro6} that
\[
\forall\,m\geq 2\ell+2,\qquad \prod_{j=1}^{\ell}\left(\frac{1+4^j/2^m}{ 4^j-1}\right)\leq\frac{2}{2^{\ell(1+\ell)}}
\]
Thus, \eqref{E:Ro5} implies the following more appealing form
\begin{equation}\label{E:Ro7}
\abs{\roe{\ell}{p}}\leq\frac{16}{\pi}\cdot\frac{1}{2^{\ell(1+\ell)}(2\pi p)^m}\,\sup_{[0,1]}\abs{f^{(m)}}
\end{equation}
\qquad We have proved the following result:
\begin{proposition}\label{prRo} Let $m$ be a positive integer, and let $f$ be a function having a continuous $m^{\text{th}}$ derivative
on $[0,1]$. For a positive $p$, let $\mathcal{T}_p^{(0)}(f)$ be the trapezoidal quadrature rule
applied to $f$ defined by \eqref{E:T1}. Next, for $\ell\geq1$, define inductively the Romberg's rule of order $\ell$  by
\[
\ro{\ell}{p}=\frac{4^\ell \ro{\ell-1}{2p}-\ro{\ell-1}{p}}{4^\ell-1}
\]
Then
\[
\int_0^1f(t)dt=\ro{\ell}{p}-\frac{(-1)^\ell}{2^{\ell(1+\ell)}}\sum_{\ell< k\leq \frac{m}{2}}\com{k-1}{\ell}_{\frac14}
\,\frac{b_{2k}}{(2k)!\cdot p^{2k}}\,\delta f^{(2k-1)}+\roe{\ell}{p}
\]
with $\lim\limits_{p\to\infty}p^m\roe{\ell}{p}=0$, (where the $q$-binomial is defined by \eqref{E:qbin}.)
Moreover, for $m\geq2\ell+2$ we have
\[
\abs{\roe{\ell}{p}}\leq\frac{16}{\pi}\cdot\frac{1}{2^{\ell(1+\ell)}(2\pi p)^m}\,\sup_{[0,1]}\abs{f^{(m)}}.
\]
\end{proposition}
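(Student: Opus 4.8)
The plan is to assemble the computations carried out in the paragraphs preceding the statement; no genuinely new idea is needed beyond a careful induction on the extrapolation level $\ell$, starting from the trapezoidal expansion. Concretely, I would set $\ro{0}{p}=\mathcal{T}_p(f)$ and $\roe{0}{p}=\mathcal{E}(p,m,f;0)$, so that the $\ell=0$ instance of the asserted formula is exactly \eqref{E:Ro1} (i.e. \eqref{E:T2} at $x=0$), since the $q$-binomial $\com{k-1}{0}_{1/4}=1$ and the empty product and sign make the right-hand side reduce to $\sum_{1\le k\le m/2}\frac{b_{2k}}{(2k)!p^{2k}}\delta f^{(2k-1)}$.

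For the inductive step the key observation is that replacing $p$ by $2p$ multiplies the $k$-th term $p^{-2k}$ by $4^{-k}$ while leaving $\delta f^{(2k-1)}$ unchanged; hence forming $\ro{\ell}{p}=\frac{4^\ell\ro{\ell-1}{2p}-\ro{\ell-1}{p}}{4^\ell-1}$ multiplies the coefficient of $\frac{b_{2k}}{(2k)!p^{2k}}\delta f^{(2k-1)}$ by the factor $\frac{4^{\ell-k}-1}{4^\ell-1}$, and the same linear recursion applied verbatim to the remainders defines $\roe{\ell}{p}$. Iterating from $\ell=0$ produces the product $\prod_{j=1}^\ell\frac{4^{j-k}-1}{4^j-1}$ in front of the $k$-th term. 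Since the $j=k$ factor vanishes, the sum collapses to the range $k>\ell$, and rewriting $\prod_{j=1}^\ell\frac{4^{j-k}-1}{4^j-1}=\frac{(-1)^\ell}{2^{\ell(1+\ell)}}\com{k-1}{\ell}_{1/4}$ via the definition \eqref{E:qbin} gives the stated expansion.

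The limiting assertion is then immediate: Theorem~\ref{th63}~\itemref{th633} gives $\lim_{p\to\infty}p^m\roe{0}{p}=0$; since $\roe{\ell}{p}$ is the fixed linear combination $\frac{4^\ell\roe{\ell-1}{2p}-\roe{\ell-1}{p}}{4^\ell-1}$ and $p^m\roe{\ell-1}{2p}=2^{-m}(2p)^m\roe{\ell-1}{2p}\to0$, induction on $\ell$ yields $\lim_{p\to\infty}p^m\roe{\ell}{p}=0$. For the explicit bound I would start from Theorem~\ref{th63}~\itemref{th632}, namely $\abs{\roe{0}{p}}\le\frac{8}{(2\pi p)^m}\sup_{[0,1]}\abs{f^{(m)}}$, propagate it through $\abs{\roe{\ell}{p}}\le\frac{4^\ell\abs{\roe{\ell-1}{2p}}+\abs{\roe{\ell-1}{p}}}{4^\ell-1}$, and obtain by induction the product bound $\abs{\roe{\ell}{p}}\le\frac{8}{(2\pi p)^m}\prod_{j=1}^\ell\frac{1+4^j/2^m}{4^j-1}\sup_{[0,1]}\abs{f^{(m)}}$ as in \eqref{E:Ro5}.

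The one genuinely quantitative point — the step I expect to need the most care — is showing that for $m\ge2\ell+2$ this product is at most $2/2^{\ell(1+\ell)}$. Here I would use $4^j/2^m\le4^{-(\ell-j+1)}$ to replace $\prod_{j=1}^\ell(1+4^j/2^m)$ by $\prod_{j=1}^\ell(1+4^{-j})$, factor $\prod_{j=1}^\ell(4^j-1)=2^{\ell(1+\ell)}\prod_{j=1}^\ell(1-4^{-j})$, bound $\sum_{j\ge1}\ln\frac{1+4^{-j}}{1-4^{-j}}<\tfrac43\ln\tfrac53$ using convexity of $x\mapsto\ln\frac{1+x}{1-x}$ on $[0,\tfrac14]$ together with $\sum_{j\ge1}4^{-j}=\tfrac13$, and finish with $(5/3)^{4/3}<2$. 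Substituting $2/2^{\ell(1+\ell)}$ into the product bound gives precisely $\abs{\roe{\ell}{p}}\le\frac{16}{\pi}\cdot\frac{1}{2^{\ell(1+\ell)}(2\pi p)^m}\sup_{[0,1]}\abs{f^{(m)}}$, which completes the proof.
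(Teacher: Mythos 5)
Your proposal is correct and follows essentially the same route as the paper: the same induction on $\ell$ starting from \eqref{E:Ro1}, the same observation that the factor $\frac{4^{j-k}-1}{4^j-1}$ vanishes at $j=k$ so the sum collapses to $k>\ell$, the same $q$-binomial rewriting, the same inductive treatment of the limit and of the product bound \eqref{E:Ro5}, and the same convexity estimate $(5/3)^{4/3}<2$ for the case $m\geq 2\ell+2$. The only caveat is that to land exactly on the stated constant $\frac{16}{\pi}$ you must start from the bound $\abs{\roe{0}{p}}\leq\frac{8}{\pi}\cdot\frac{1}{(2\pi p)^m}\sup_{[0,1]}\abs{f^{(m)}}$ as actually stated in Theorem~\ref{th63}~\itemref{th632}, rather than the $\frac{8}{(2\pi p)^m}$ you wrote (a slip that also appears in the paper's own display preceding \eqref{E:Ro5}).
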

\qquad Consider the particular case where $m=2\ell+2$. In this case, Proposition \ref{prRo} implies
\begin{align*}
\abs{\int_0^1f(t)dt-\ro{\ell}{p}}&\leq\frac{1}{2^{\ell(1+\ell)}(2\pi p)^{2\ell+2}}\,\left(\frac{(2\pi)^{2\ell+2}\abs{b_{2\ell+2}}}{(2\ell+2)!}\,\abs{\delta f^{(2\ell+1)}}+\frac{16}{\pi}\,\sup_{[0,1]}\abs{f^{(2\ell+2)}}\right)\\
 &\leq\frac{1}{2^{\ell(\ell+1)}(2\pi p)^{2\ell+2}}\,\left(4\,\abs{\delta f^{(2\ell+1)}}+
\frac{16}{\pi}\,\sup_{[0,1]}\abs{f^{(2\ell+2)}}\right)\\
&\leq\frac{4+16/\pi}{2^{\ell(\ell+1)}(2\pi p)^{2\ell+2}}\,\sup_{[0,1]}\abs{f^{(2\ell+2)}}\\
&\leq\frac{10}{2^{(\ell+2)(\ell+1)}(\pi p)^{2\ell+2}}\,\sup_{[0,1]}\abs{f^{(2\ell+2)}}
\end{align*}
where we used Proposition~\ref{pr41} \itemref{pr411}, and the fact that $\abs{\delta f^{(2\ell+1)}}\leq\sup_{[0,1]}\abs{f^{(2\ell+2)}}$.\bg
Note that the Romberg's rule $\mathcal{T}_p^{(\ell)}$ subdivides the interval $[0,1]$ into $2^\ell p$ equal subintervals. In
the particular case $p=1$, we obtain
\[
\abs{\int_0^1f(t)dt-\ro{\ell}{1}}
\leq\frac{10}{2^{(\ell+2)(\ell+1)}\,\pi ^{2\ell+2}}\,\sup_{[0,1]}\abs{f^{(2\ell+2)}}.
\]

\bg
\section{Asymptotic expansions for the sum of certain series related to harmonic numbers}\label{sec8}
\bn
\qquad Recall that the sequence of  harmonic numbers $(H_n)_{n\in\nat}$ is defined by $H_n=\sum_{k=1}^n1/k$ (with the convention
$H_0=0$). It is well-known
that $\lim\limits_{n\to\infty}(H_n-\ln n)=\gamma$, where $\gamma\approx 0.57721\,56649$ is the so called Euler-Mascheroni Constant. \bg
 \qquad In the next proposition, the  asymptotic expansion of $(H_n)_{n\in\nat}$ is presented.
\bg
\begin{proposition}\label{pr81}
For every positive integer $n$ and nonnegative integer $m$, we have
\[
H_n=\ln n+\gamma+\frac{1}{2n}-\sum_{k=1}^{m-1}\frac{b_{2k}}{2k}\cdot\frac{1}{n^{2k}}+(-1)^m R_{n,m},
\]
with 
\[
R_{n,m}=\int_0^{1/2}\abs{B_{2m-1}(t)}\,
\sum_{j=n}^\infty\left(\frac{1}{(j+t)^{2m}}-\frac{1}{(j+1-t)^{2m}}
\right)\,dt
\]
Moreover,  $ 0<R_{n,m}<\dfrac{\abs{b_{2m}}}{2m\cdot n^{2m}}$.
\end{proposition}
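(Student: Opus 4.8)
The plan is to derive the expansion term by term, applying Corollary~\ref{cor61} to the functions $f_j(x)=-\dfrac{1}{j+x}$ on $[0,1]$ for each $j\geq n$, and then summing over $j$ and letting the upper summation limit tend to $\infty$. Note first that $f_j$ has derivatives of every order on $[0,1]$ (there is no singularity, since $j+x\geq 1$), and that $f_j^{(2m-1)}(x)=\dfrac{(2m-1)!}{(j+x)^{2m}}$ is positive and strictly \emph{decreasing} on $[0,1]$, so the hypotheses of Corollary~\ref{cor61} are met; this is why one works with $-1/(j+x)$ rather than $1/(j+x)$, whose odd derivatives increase. Computing the ingredients, $\int_0^1 f_j(t)\,dt=\ln j-\ln(j+1)$, $\tfrac12\bigl(f_j(1)+f_j(0)\bigr)=-\tfrac12\bigl(\tfrac1{j+1}+\tfrac1j\bigr)$, $\delta f_j^{(2k-1)}=(2k-1)!\bigl(\tfrac{1}{(j+1)^{2k}}-\tfrac1{j^{2k}}\bigr)$, and since $\tfrac{b_{2k}}{(2k)!}(2k-1)!=\tfrac{b_{2k}}{2k}$, Corollary~\ref{cor61} gives, after multiplying through by $-1$,
\[
\ln(j+1)-\ln j=\frac12\left(\frac1{j+1}+\frac1j\right)+\sum_{k=1}^{m-1}\frac{b_{2k}}{2k}\left(\frac{1}{(j+1)^{2k}}-\frac1{j^{2k}}\right)+(-1)^m\rho_{j,m},
\]
where $\rho_{j,m}=\int_0^{1/2}\abs{B_{2m-1}(t)}\bigl(\tfrac{1}{(j+t)^{2m}}-\tfrac{1}{(j+1-t)^{2m}}\bigr)\,dt$, and where Corollary~\ref{cor61} moreover guarantees $0\leq\rho_{j,m}\leq\tfrac{6(2m-1)!}{(2\pi)^{2m}}\bigl(\tfrac1{j^{2m}}-\tfrac1{(j+1)^{2m}}\bigr)$.

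Next I would sum this identity over $j=n,n+1,\dots,N-1$. The left side telescopes to $\ln N-\ln n$; the first sum on the right equals $H_N-H_n-\tfrac1{2N}+\tfrac1{2n}$ (using $H_{N-1}=H_N-\tfrac1N$ and $H_{n-1}=H_n-\tfrac1n$); the inner telescoping sum of $\tfrac{1}{(j+1)^{2k}}-\tfrac1{j^{2k}}$ gives $\tfrac1{N^{2k}}-\tfrac1{n^{2k}}$; and the last term is $(-1)^m\sum_{j=n}^{N-1}\rho_{j,m}$. Solving for $H_n$ and letting $N\to\infty$ — using $H_N-\ln N\to\gamma$, the vanishing of $\tfrac1{2N}$ and $\tfrac1{N^{2k}}$, and convergence of $\sum_{j\geq n}\rho_{j,m}$ (which follows from the telescoping majorant above) — yields
\[
H_n=\ln n+\gamma+\frac1{2n}-\sum_{k=1}^{m-1}\frac{b_{2k}}{2k}\cdot\frac1{n^{2k}}+(-1)^m R_{n,m},\qquad R_{n,m}:=\sum_{j=n}^\infty\rho_{j,m}.
\]
Since the summands and the integrands defining $\rho_{j,m}$ are nonnegative, monotone convergence lets me interchange sum and integral, putting $R_{n,m}$ in exactly the stated form $\int_0^{1/2}\abs{B_{2m-1}(t)}\sum_{j=n}^\infty\bigl(\tfrac1{(j+t)^{2m}}-\tfrac1{(j+1-t)^{2m}}\bigr)\,dt$.

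It remains to bound $R_{n,m}$. Positivity is immediate: for $t\in(0,1/2)$ and $j\geq n\geq1$ one has $j+t<j+1-t$ and, by Proposition~\ref{pr23}, $\abs{B_{2m-1}(t)}>0$, so each $\rho_{j,m}>0$ and hence $R_{n,m}>0$. For the sharp upper bound, the crude estimate $R_{n,m}\leq\tfrac{6(2m-1)!}{(2\pi)^{2m}n^{2m}}$ obtained from the majorant is not good enough; instead I would rerun the whole argument with $m$ replaced by $m+1$ (still legitimate, as $f_j^{(2m+1)}$ is again positive and decreasing), obtaining the same expansion with one extra term and remainder $(-1)^{m+1}R_{n,m+1}$, $R_{n,m+1}>0$. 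Subtracting the two expressions for $H_n$ gives $(-1)^m R_{n,m}=-\tfrac{b_{2m}}{2m\,n^{2m}}+(-1)^{m+1}R_{n,m+1}$, and since $b_{2m}=(-1)^{m+1}\abs{b_{2m}}$ by Remark~\ref{rm1}, this reads $R_{n,m}=\tfrac{\abs{b_{2m}}}{2m\,n^{2m}}-R_{n,m+1}<\tfrac{\abs{b_{2m}}}{2m\,n^{2m}}$, as required. The only genuinely delicate points are the sign bookkeeping that makes Corollary~\ref{cor61} applicable, and the realization that the sharp constant in the remainder estimate must come from this $m\leftrightarrow m+1$ comparison rather than from the elementary bound.
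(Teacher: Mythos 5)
Your proof is correct and uses the same essential machinery as the paper: Corollary~\ref{cor61} supplies the expansion and the remainder integral, and the sharp bound $R_{n,m}<\abs{b_{2m}}/(2m\,n^{2m})$ comes from comparing the expansions at consecutive orders (the paper phrases this as treating $m$ even and $m$ odd separately, which is the same device). The only difference is bookkeeping: the paper first represents $\gamma+\ln n-H_n+\frac1n$ as $\int_0^1 f_n$ with $f_n(t)=\sum_{j\geq n}\frac{t}{j(j+t)}$ and applies the corollary once to that tail function, whereas you apply it to each $-1/(j+x)$ and telescope before passing to the limit — an equivalent and, if anything, slightly more self-contained route.
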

\begin{proof}
Note that for $j\geq1$ we have
\[
\frac{1}{j}-\ln\left(1+\frac{1}{j}\right)=\int_0^1\left(\frac1j-\frac{1}{j+t}\right)\,dt=\int_0^1\frac{t}{j(j+t)}\,dt
\]
Adding these equalities as $j$ varies from $1$ to $n-1$ we conclude that
\[
H_n-\ln n-\frac{1}{n}=\int_0^1\left(\sum_{j=1}^{n-1}\frac{t}{j(j+t)}\right)\,dt.
\]
Thus, letting $n$ tend to $\infty$, and using  the monotone convergence theorem \cite[Corollary 2.3.5]{ath}, we conclude
\[
\gamma=\int_0^1\left(\sum_{j=1}^{\infty}\frac{t}{j(j+t)}\right)\,dt.
\]
It follows that
\begin{equation*}
\gamma+\ln n-H_n+\frac{1}{n}=\int_0^1\left(\sum_{j=n}^\infty\frac{t}{j(j+t)}\right)\,dt.
\end{equation*}
So, let us consider the function $f_n:[0,1]\vers\reel$ defined by
\[
f_n(t)=\sum_{j=n}^\infty\frac{t}{j(j+t)}
\]
Note that $f_n(0)=0$, $f_n(1)=1/n$, and that $f_n$ is infinitely continuously derivable with
\[
\frac{f_n^{(k)}(t)}{k!}=(-1)^{k+1}\sum_{j=n}^\infty\frac{1}{(j+t)^{k+1}},\quad\text{for $k\geq1$.}
\]
In particular, 
\[
\frac{f_n^{(2k-1)}(t)}{(2k-1)!}=\sum_{j=n}^\infty\frac{1}{(j+t)^{2k}},\quad\text{for $k\geq1$.}
\]
So, $f_n^{(2m-1)}$ is decreasing on the interval $[0,1]$, and
\[
\frac{\delta f_n^{(2k-1)}}{(2k-1)!} =\sum_{j=n}^\infty\frac{1}{(j+1)^{2k}}
-\sum_{j=n}^\infty\frac{1}{j^{2k}}=-\frac{1}{n^{2k}}
\]
Applying Corollary \ref{cor61} to $f_n$, and using the above data, we get
\[
\gamma+\ln n-H_n+\frac{1}{2n}=
\sum_{k=1}^{m-1}\frac{b_{2k}}{2k\,n^{2k}}+(-1)^{m+1}R_{n,m}
\]
with
\[
R_{n,m}=\int_0^{1/2}\abs{B_{2m-1}(t)}\,
\sum_{j=n}^\infty\left(\frac{1}{(j+t)^{2m}}-\frac{1}{(j+1-t)^{2m}}
\right)\,dt
\]
and
\[
0< R_{n,m}<\frac{6\cdot(2m-1)!}{(2\pi)^{2m}n^{2m}}.
\]
What is important in this estimate is the lower bound, \textit{i.e.} $R_{n,m}>0$. In fact, considering separately the cases $m$ odd and $m$ even, we obtain, for every nonnegative integer $m'$:
\begin{align*}
H_n&<\ln n+\gamma+\frac{1}{2n}-\sum_{k=1}^{2m'}\frac{b_{2k}}{2k}\cdot\frac{1}{n^{2k}},\\
\noalign{\text{and}}
H_n&>\ln n+\gamma+\frac{1}{2n}-\sum_{k=1}^{2m'+1}\frac{b_{2k}}{2k}\cdot\frac{1}{n^{2k}}.\\
\end{align*}
This yields the following more precise estimate for the error term:
\begin{equation}\label{kn}
0<(-1)^{m}\left(H_n-\ln n-\gamma-\frac{1}{2n}+
\sum_{k=1}^{m-1}\frac{b_{2k}}{2k\cdot n^{2k}} \right)<\frac{\abs{b_{2m}}}{2m\cdot n^{2m}}
\end{equation}
which is valid for every positive integer $m$. (see \cite[Chapter 9]{knuth2}.)
\end{proof}
\bg
\qquad For example, for every positive integer $n$, we have
\[
\ln n+\gamma+\frac{1}{2n}-\frac{1}{12n^2}<H_n<\ln n+\gamma+\frac{1}{2n}-\frac{1}{12\,n^2}+\frac{1}{120\, n^{4}}.
\]
Thus, for every $n\geq1$ we have $\gamma_n^{-}<\gamma<\gamma_n^+$ with:
\[
\gamma_n^{+}=H_n-\ln n-\frac{1}{2n}+\frac{1}{12\,n^2}\quad\text{and}\quad
\gamma_n^{-}=\gamma_n^{+}-\frac{1}{120\,n^{4}}.
\]
In Table~\ref{gamma} we find the values of these bounds for Euler's $\gamma$. It is worth noting that
formula \eqref{kn} with $m=251$ and $n=10^4$ was used by Knuth in 1962 to obtain $1271$ decimal digits of 
Euler's constant \cite{knuth}.
\bg

\begin{table}[!ht]
\begin{center}
\begin{tabular}{|c|c|c|}
\hline
 $\phantom{\Big|}n\phantom{\Big|}$ &$\gamma_n^{-}$&$\gamma_n^{+}$\\ \hline
 1 & 0.5750000000 & 0.5833333333 \\
 2 & 0.5771653194 & 0.5776861528 \\
 4 & 0.5772147535 & 0.5772473055 \\
 8 & 0.5772156500 & 0.5772176845 \\
 16 & 0.5772156647 & 0.5772157918 \\
 32 & 0.5772156649 & 0.5772156728 \\
 64 & 0.5772156649 & 0.5772156654 \\
 128 & 0.5772156649 & 0.5772156649 \\
\hline
\multicolumn{3}{c}{$\vphantom{\scriptstyle{x}}$}\\
\end{tabular}
\end{center}
\setlength{\belowcaptionskip}{10pt}
\caption{Euler's $\gamma$ belongs to the interval $(\gamma_n^{-},\gamma_n^{+})$ for each $n$.}\label{gamma}
\end{table}

\bg
\qquad Now, consider the two sequences $(c_n)_{n\geq1}$ and $(d_n)_{n\geq1}$ defined by
\[
c_n=H_n-\ln n-\gamma-\frac{1}{2n}\qquad\text{and}\qquad d_n=H_n-\ln n-\gamma
\]
For a positive integer $p$, we know according to Proposition~\ref{pr81} that $c_{pn}=\mathcal{O}\left(\frac{1}{n^2}\right)$, it follows that
the series $\sum_{n=1}^\infty c_{pn}$ is convergent. Similarly, since $d_{pn}=c_{pn}+\frac{1}{2pn}$ and the series $\sum_{n=1}^\infty(-1)^{n-1}/n$ is convergent, we conclude that $\sum_{n=1}^\infty(-1)^{n-1} d_{pn}$ is also convergent.
In what follows we aim to find asymptotic expansions, (for large $p$,) of the following sums:
\begin{align}
C_p&=\sum_{n=1}^\infty c_{pn}=\sum_{n=1}^\infty\left(H_{pn}-\ln(pn)-\gamma-\frac{1}{2pn}\right)\label{E:Cp}\\
D_p&=\sum_{n=1}^\infty (-1)^{n-1}d_{pn} =\sum_{n=1}^\infty(-1)^{n-1}\left(H_{pn}-\ln(pn)-\gamma\right)\label{E:Dp}
\end{align}

\begin{proposition}\label{pr82}
If $p$ and $m$ are positive integers and $C_p$ is defined by \eqref{E:Cp}, then
\[
C_p=-\sum_{k=1}^{m-1}\frac{b_{2k}\zeta(2k)}{2k\cdot p^{2k}}
+(-1)^m\frac{\zeta(2m)}{2m\cdot p^{2m}}\eps_{p,m},\quad\text{with $0<\eps_{p,m}<\abs{b_{2m}}$},
\]
where $\zeta$ is the well-known Riemann zeta function.
\end{proposition}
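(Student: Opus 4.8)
The plan is to feed the sharp asymptotic expansion of Proposition~\ref{pr81} — more precisely the two–sided estimate \eqref{kn} — into the defining series for $C_p$ and sum term by term.

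First I would replace $n$ by $pn$ in \eqref{kn}. Writing $R_{pn,m}$ for the error term of Proposition~\ref{pr81} evaluated at the integer $pn$, this gives, for every $n\geq1$,
\[
c_{pn}=H_{pn}-\ln(pn)-\gamma-\frac{1}{2pn}=-\sum_{k=1}^{m-1}\frac{b_{2k}}{2k\,(pn)^{2k}}+(-1)^m R_{pn,m},\qquad 0<R_{pn,m}<\frac{\abs{b_{2m}}}{2m\,(pn)^{2m}}.
\]
(For $m=1$ the sum over $k$ is empty, and the argument below goes through unchanged.)

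Next I would sum this identity over $n\geq1$. The series $\sum_{n\geq1}c_{pn}=C_p$ converges by the remark preceding the statement. On the right-hand side, the contribution of the finite sum over $k$ is $\sum_{n\geq1}\sum_{k=1}^{m-1}\frac{b_{2k}}{2k(pn)^{2k}}$; this is absolutely convergent, since each inner series $\sum_n n^{-2k}$ converges for $k\geq1$, so the (finite, resp.\ convergent) summations may be interchanged to produce $\sum_{k=1}^{m-1}\frac{b_{2k}\zeta(2k)}{2k\,p^{2k}}$. The remaining piece $\sum_{n\geq1}R_{pn,m}$ is a series of strictly positive terms dominated by $\sum_{n\geq1}\frac{\abs{b_{2m}}}{2m(pn)^{2m}}=\frac{\abs{b_{2m}}\zeta(2m)}{2m\,p^{2m}}<\infty$, hence it converges (monotone convergence/comparison). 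Therefore
\[
C_p=-\sum_{k=1}^{m-1}\frac{b_{2k}\zeta(2k)}{2k\,p^{2k}}+(-1)^m\sum_{n=1}^\infty R_{pn,m}.
\]

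Finally I would set $\eps_{p,m}=\dfrac{2m\,p^{2m}}{\zeta(2m)}\sum_{n=1}^\infty R_{pn,m}$. Strict positivity of each $R_{pn,m}$ gives $\eps_{p,m}>0$, and the strict domination above gives $\eps_{p,m}<\abs{b_{2m}}$; substituting $\sum_{n\geq1}R_{pn,m}=\frac{\zeta(2m)}{2m\,p^{2m}}\eps_{p,m}$ into the last display yields exactly the claimed formula. There is no genuine obstacle here: the only point needing a word of justification is the legitimacy of summing term by term, which is handled by absolute convergence for the polynomial-in-$1/p$ part and by positivity for the error part, and the sharp constants $0<\eps_{p,m}<\abs{b_{2m}}$ are inherited directly from the sharp constants already established in \eqref{kn}.
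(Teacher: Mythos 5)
Your argument is correct and is essentially the paper's own proof: both substitute $pn$ into the two-sided estimate of Proposition~\ref{pr81}, sum over $n$ term by term (interchanging the finite $k$-sum with the convergent $n$-sum to produce the $\zeta(2k)$ factors), and define $\eps_{p,m}$ by normalizing the positive tail $\sum_{n\geq1}R_{pn,m}$, whose strict bounds $0<\eps_{p,m}<\abs{b_{2m}}$ follow from the strict bounds on each $R_{pn,m}$. The only cosmetic difference is that the paper writes the remainder as $\frac{r_{pn,m}}{2m\,p^{2m}n^{2m}}$ with $0<r_{pn,m}\leq\abs{b_{2m}}$ before summing, which is the same normalization you perform at the end.
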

\begin{proof} Indeed, we conclude from Proposition \ref{pr81} that 
\[
H_{pn}-\ln(pn)-\gamma-\frac{1}{2pn}=-\sum_{k=1}^{m-1}\frac{b_{2k}}{2k\cdot p^{2k}}\cdot\frac{1}{n^{2k}}
+\frac{(-1)^m}{2m\cdot p^{2m}}\cdot\frac{r_{pn,m}}{n^{2m}}.
\]
with $0<r_{pn,m}\leq\abs{b_{2m}}$. It follows that
\[C_p=-\sum_{k=1}^{m-1}\frac{b_{2k}}{2k\,p^{2k}}\cdot\left(\sum_{n=1}^\infty
\frac{1}{n^{2k}}\right)+\frac{(-1)^m}{2m\cdot p^{2m}}\cdot\tilde{r}_{p,m}.
\]
where $\tilde{r}_{p,m}=\sum_{n=1}^\infty\frac{r_{pn,m}}{n^{2m}}$. 
\bg
Hence,
\[
0<\tilde{r}_{p,m}=\sum_{n=1}^\infty
\frac{r_{pn,m}}{n^{2m}}< \abs{b_{2m}} \,
\sum_{n=1}^\infty
\frac{1}{n^{2m}}=\abs{b_{2m}}\zeta(2m)
\]
and the desired conclusion follows with $\eps_{p,m}=\tilde{r}_{p,m}/\zeta(2m)$.
\end{proof}
\bg
For example, when $m=3$, we obtain
\[
\sum_{n=1}^\infty\left(H_{pn}-\ln(pn)-\gamma-\frac{1}{2pn}\right)
=-\frac{\pi^2}{72p^2}+\frac{\pi^4}{10800p^4}+\mathcal{O}\left(\frac{1}{p^6}\right).
\]
 
\bg
\qquad Similarly, in the next proposition we have the analogous result corresponding to $D_p$.\bg

\begin{proposition}\label{pr83}
If $p$ and $m$ are positive integers and $D_p$ is defined by \eqref{E:Dp}, then
\[
D_p=\frac{\ln 2}{2p}
-\sum_{k=1}^{m-1} \frac{b_{2k}\eta(2k)}{2k \cdot p^{2k}}
+(-1)^m\frac{\eta(2m)}{2m\cdot p^{2m}}\eps'_{p,m},\quad\text{with $0<\eps'_{p,m}<\abs{b_{2m}}$,}
\]
where $\eta(2k)$ is defined in Corollary~\ref{cor31}. 
\end{proposition}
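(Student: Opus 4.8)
The plan is to follow the pattern of the proof of Proposition~\ref{pr82}; the only genuinely new difficulty is controlling an \emph{alternating} remainder series whose terms are not obviously monotone. First I would reduce $D_p$ to an alternating sum of the quantities $c_{pn}$: since $d_{pn}=c_{pn}+\tfrac{1}{2pn}$, and since $\sum_{n\geq1}c_{pn}$ converges absolutely (because $c_{pn}=\mathcal{O}(1/n^2)$ by Proposition~\ref{pr81}) while $\sum_{n\geq1}(-1)^{n-1}/n=\ln 2$, one may split
\[
D_p=\sum_{n=1}^\infty(-1)^{n-1}d_{pn}=\frac{\ln 2}{2p}+\sum_{n=1}^\infty(-1)^{n-1}c_{pn}.
\]

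Next I would feed in Proposition~\ref{pr81} at the integer $pn$, writing $c_{pn}=-\sum_{k=1}^{m-1}\frac{b_{2k}}{2k\,p^{2k}n^{2k}}+(-1)^mR_{pn,m}$ with $0<R_{pn,m}<\frac{\abs{b_{2m}}}{2m\,p^{2m}n^{2m}}$. Since $\sum_n\sum_{k=1}^{m-1}\abs{b_{2k}}/(2kp^{2k}n^{2k})$ and $\sum_nR_{pn,m}$ are both finite, the two summations may be interchanged, and with $\sum_{n\geq1}(-1)^{n-1}/n^{2k}=\eta(2k)$ this gives
\[
D_p=\frac{\ln 2}{2p}-\sum_{k=1}^{m-1}\frac{b_{2k}\eta(2k)}{2k\,p^{2k}}+(-1)^m\,S_m,\qquad S_m:=\sum_{n=1}^\infty(-1)^{n-1}R_{pn,m}.
\]
It then remains to show $0<S_m<\frac{\abs{b_{2m}}\eta(2m)}{2m\,p^{2m}}$, for then $\eps'_{p,m}:=\frac{2m\,p^{2m}}{\eta(2m)}S_m$ satisfies the asserted bounds.

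For the lower bound $S_m>0$, which is the crux, I would invoke the explicit integral formula for the remainder produced inside the proof of Proposition~\ref{pr81}, namely
\[
R_{N,m}=\int_0^{1/2}\abs{B_{2m-1}(t)}\sum_{j=N}^\infty\left(\frac{1}{(j+t)^{2m}}-\frac{1}{(j+1-t)^{2m}}\right)dt .
\]
For $t\in(0,\tfrac12)$ every summand of the inner series is strictly positive, so its tail is positive and strictly decreasing in $N$; since moreover $\abs{B_{2m-1}}>0$ on $(0,\tfrac12)$ by Proposition~\ref{pr23}, the sequence $N\mapsto R_{N,m}$ is strictly decreasing with limit $0$. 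Hence $n\mapsto R_{pn,m}$ is positive, strictly decreasing and tends to $0$, so $S_m$ is an alternating series of that type and $S_m\geq R_{p,m}-R_{2p,m}>0$.

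For the upper bound I would subtract the expansion of Proposition~\ref{pr81} at levels $m$ and $m+1$, which gives, for every $N$, $R_{N,m}+R_{N,m+1}=(-1)^{m+1}b_{2m}/(2m\,N^{2m})=\abs{b_{2m}}/(2m\,N^{2m})$, the last equality by Remark~\ref{rm1}. Taking $N=pn$, multiplying by $(-1)^{n-1}$ and summing yields $S_m=\frac{\abs{b_{2m}}\eta(2m)}{2m\,p^{2m}}-S_{m+1}$, and since $S_{m+1}>0$ by the previous paragraph (applied with $m+1\geq2$ in place of $m$), we conclude $S_m<\frac{\abs{b_{2m}}\eta(2m)}{2m\,p^{2m}}$, which finishes the proof. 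The main obstacle is the positivity $S_m>0$: once the integral representation of $R_{N,m}$ is on the table it reduces to the standard alternating-series inequality, but without it the variable size of the $R_{pn,m}$ would leave the sign of $S_m$ unclear.
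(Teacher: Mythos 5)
Your proposal is correct and follows essentially the same route as the paper: both reduce $D_p$ to $\frac{\ln 2}{2p}+\sum_{n\geq1}(-1)^{n-1}c_{pn}$, insert the expansion of Proposition~\ref{pr81}, and obtain the sign of the remainder from the integral representation of $R_{N,m}$ together with the alternating-series criterion (the paper applies that criterion pointwise to $g_{pn,m}(t)$ under the integral, you apply it directly to the monotone sequence $N\mapsto R_{N,m}$ --- the same fact). Your upper bound via the identity $S_m+S_{m+1}=\frac{\abs{b_{2m}}\eta(2m)}{2m\,p^{2m}}$ is just an explicit rewriting of the paper's ``consider $m$ odd and $m$ even separately'' step, so the two arguments coincide in substance.
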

\begin{proof}
Indeed, let us define $a_{n,m}$ by the formula
\[
a_{n,m}=H_n-\ln n-\gamma-\frac{1}{2n}+\sum_{k=1}^{m-1}\frac{b_{2k}}{2k\cdot n^{2k}}
\]
with empty sum equal to 0. We have shown in the proof of Proposition \ref{pr81} that
\[
(-1)^ma_{n,m}=\int_0^{1/2}\abs{B_{2m-1}(t)}g_{n,m}(t)\,dt
\]
where $g_{n,m}$ is the positive decreasing function on $[0,1/2]$ defined by
\[
g_{n,m}(t)=\sum_{j=n}^\infty\left(\frac{1}{(j+t)^{2m}}-\frac{1}{(j+1-t)^{2m}}\right).
\]
Now, for every $t\in[0,1/2]$ the sequence $(g_{np,m}(t))_{n\geq1}$ is positive and decreasing to $0$. So, Using the alternating series criterion
\cite[Theorem~7.8, and Corollary~7.9]{aman} we see that, for every $N\geq1$ and $t\in[0,1/2]$,
\[
\abs{\sum_{n=N}^\infty(-1)^{n-1}g_{np,m}(t)}\leq g_{Np,m}(t)\leq g_{Np,m}(0)=\frac{1}{(Np)^{2m}}.
\]
This proves the uniform convergence on $[0,1/2]$ of the series 
\[G_{p,m}(t)=\sum_{n=1}^\infty(-1)^{n-1}g_{np,m}(t).
\]
Consequently
\[
(-1)^m\sum_{n=1}^\infty(-1)^{n-1}a_{pn,m}=\int_0^{1/2}\abs{B_{2m-1}(t)}G_{p,m}(t)\,dt.
\]
Now using the properties of alternating series, we see that for $t\in(0,1/2)$ we have
\[
0<G_{p,m}(t)<g_{p,m}(t)<g_{p,m}(0)=\sum_{j=p}^\infty\left(\frac{1}{j^{2m}}-\frac{1}{(j+1)^{2m}}\right)=\frac{1}{p^{2m}}
\]
Thus, 
\[
\sum_{n=1}^\infty(-1)^{n-1}a_{pn,m}=\frac{(-1)^m}{p^{2m}}\rho_{p,m}
\]
with $0<\rho_{p,m}<\int_0^{1/2}\abs{B_{2m-1}(t)}\,dt$.
\bg
On the other hand we have
\begin{align*}
\sum_{n=1}^\infty(-1)^{n-1}a_{pn,m}&=D_p
-\frac{1}{2p} \sum_{n=1}^\infty\frac{(-1)^{n-1}}{n}+\sum_{k=1}^{m-1}\frac{b_{2k}}{2k\,p^{2k}}\sum_{n=1}^\infty\frac{(-1)^{n-1}}{n^{2k}}\\
&=D_p-\frac{\ln 2}{2p} +\sum_{k=1}^{m-1}\frac{b_{2k}\eta(2k)}{2k\cdot p^{2k}}.
\end{align*}
 Thus
\[
D_p=\frac{\ln 2}{2p}-\sum_{k=1}^{m-1}\frac{b_{2k}\eta(2k)}{2k\cdot p^{2k}}
+\frac{(-1)^m}{p^{2m}}\rho_{p,m}
\]
Now,  the important estimate for $\rho_{p,m}$ is the lower bound, \textit{i.e.} $\rho_{p,m}>0$. In fact, considering separately the cases $m$ odd and $m$ even, we obtain, for every nonnegative integer $m'$:
\begin{align*}
D_p&<\frac{\ln 2}{2p}-\sum_{k=1}^{2m'}\frac{b_{2k}\eta(2k)}{2k\cdot p^{2k}},\\
\noalign{\text{and}}
D_p&>\frac{\ln 2}{2p}-\sum_{k=1}^{2m'+1}\frac{b_{2k}\eta(2k)}{2k\cdot p^{2k}}.\\
\end{align*}
This yields the following more precise estimate for the error term:
\[
0<(-1)^{m}\left(D_p-\frac{\ln 2}{2p}+\sum_{k=1}^{m-1}\frac{b_{2k}\eta(2k)}{2k\,p^{2k}}
\right)<\frac{\abs{b_{2m}}\eta(2m)}{2m\cdot p^{2m}},
\]
and the desired conclusion follows.
\end{proof}
\bg

\qquad In the next lemma, we will show that there are other alternating series that can be expressed in terms of $D_p$,
This lemma will be helpful in \S~\ref{sec9}.\bg
\begin{lemma}\label{lm84}
For a positive integer $p$, we have
\begin{equation}
E_p \egdef\sum_{n=0}^\infty(-1)^{n}(H_{p(n+1)}-H_{pn}) =\ln p+\gamma-\ln\left(\frac{\pi}{2}\right)+2D_p,\label{E:Ep}
\end{equation}
where $D_p$ is the sum defined by  \eqref{E:Dp}.
\end{lemma}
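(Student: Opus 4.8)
The plan is to introduce the shorthand $d_m = H_m-\ln m-\gamma$, so that $H_m=\ln m+\gamma+d_m$, and to recall (from Proposition~\ref{pr81}, or directly from $\frac1{m+1}<\ln\frac{m+1}{m}$) that $d_m>0$ and $(d_m)_{m\geq1}$ decreases to $0$ with $d_m=\frac1{2m}+O(m^{-2})$. First I would peel off the $n=0$ term of $E_p$, which contributes $H_p-H_0=H_p$, and for $n\geq1$ write
\[
H_{p(n+1)}-H_{pn}=\ln\frac{n+1}{n}+d_{p(n+1)}-d_{pn},
\]
so that $E_p=H_p+S+T$ with $S=\sum_{n=1}^\infty(-1)^n\ln\frac{n+1}{n}$ and $T=\sum_{n=1}^\infty(-1)^n\bigl(d_{p(n+1)}-d_{pn}\bigr)$. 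Both series converge: the terms of $S$ decrease to $0$, while the terms of $T$ are $O(n^{-2})$ by the expansion of $d_m$.

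The second step is to evaluate $S$. Grouping the term $n=2k-1$ with the term $n=2k$ gives $-\ln\frac{2k}{2k-1}+\ln\frac{2k+1}{2k}=\ln\frac{(2k-1)(2k+1)}{(2k)^2}$, so the even partial sums satisfy $S_{2K}=\ln\prod_{k=1}^{K}\frac{4k^2-1}{4k^2}$, which tends to $\ln\frac{2}{\pi}$ by Wallis' product $\prod_{k\geq1}\frac{4k^2}{4k^2-1}=\frac{\pi}{2}$. Since $S$ converges, its value equals that of this subsequence of partial sums, namely $S=\ln\frac{2}{\pi}=-\ln\frac{\pi}{2}$.

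The third step is to evaluate $T$. I would split $T=\sum_{n\geq1}(-1)^n d_{p(n+1)}-\sum_{n\geq1}(-1)^n d_{pn}$ — each an alternating series with $d_{pm}$ decreasing to $0$, hence convergent — then reindex the first sum by $m=n+1$ to obtain $-\sum_{m\geq2}(-1)^m d_{pm}$. By the very definition of $D_p$ in \eqref{E:Dp} we have $\sum_{n\geq1}(-1)^{n-1}d_{pn}=D_p$, whence $\sum_{n\geq1}(-1)^n d_{pn}=-D_p$ and $\sum_{m\geq2}(-1)^m d_{pm}=-D_p+d_p$. Combining, $T=-(-D_p+d_p)-(-D_p)=2D_p-d_p$. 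Putting the pieces together and using $H_p-d_p=\ln p+\gamma$,
\[
E_p=H_p+S+T=(\ln p+\gamma)-\ln\frac{\pi}{2}+2D_p,
\]
which is exactly the asserted identity.

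The only steps requiring genuine care are the appeal to Wallis' product when evaluating $S$, and checking that each series produced by regrouping or by splitting a difference actually converges on its own, so that those rearrangements are legitimate; once those convergence facts are in hand, the rest is bookkeeping. I expect the $S$-evaluation (getting the $-\ln(\pi/2)$ term out cleanly) to be the most substantive point, as it is where the transcendental constant enters.
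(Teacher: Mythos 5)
Your proof is correct and is essentially the paper's own argument run in the opposite direction: the paper starts from $2D_p$, reindexes the two copies of $\sum(-1)^{n-1}d_{pn}$ to produce $\sum(-1)^{n}(H_{p(n+1)}-H_{pn})$ plus the alternating log series, and evaluates the latter as $\ln(\pi/2)$ via Wallis' product, exactly the three ingredients (reindexing of the $d_{pn}$ series, the decomposition $H_{p(n+1)}-H_{pn}=\ln\tfrac{n+1}{n}+d_{p(n+1)}-d_{pn}$, and Wallis) that you use. No gaps; the convergence checks you flag are the same ones implicitly needed in the paper's version.
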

\begin{proof}
Indeed
\begin{align*}
2D_p&=d_p+\sum_{n=2}^\infty(-1)^{n-1}d_{pn}+\sum_{n=1}^\infty(-1)^{n-1}d_{pn}\\
&=d_p+\sum_{n=1}^\infty(-1)^{n}d_{p(n+1)}+\sum_{n=1}^\infty(-1)^{n-1}d_{pn}\\
&=d_p+\sum_{n=1}^\infty(-1)^{n-1}(d_{pn}-d_{p(n+1)})\\
&=d_p+\sum_{n=1}^\infty(-1)^{n}(H_{p(n+1)}-H_{pn})+\sum_{n=1}^\infty(-1)^{n-1}\ln\left(\frac{n+1}{n}\right)\\
&=-\ln p-\gamma+\sum_{n=0}^\infty(-1)^{n}(H_{p(n+1)}-H_{pn})+\sum_{n=1}^\infty(-1)^{n-1}\ln\left(\frac{n+1}{n}\right)\\
\end{align*}
Using Wallis formula for $\pi$~ \cite[Formula~0.262]{grad},  we have
\begin{align*}
\sum_{n=1}^\infty(-1)^{n-1}\ln\left(\frac{n+1}{n}\right)&=
\sum_{n=1}^\infty\ln\left(\frac{2n}{2n-1}\cdot\frac{2n}{2n+1}\right)\\
&=-\ln\prod_{n=1}^\infty\left(1-\frac{1}{4n^2}\right)=\ln\left(\frac{\pi}{2}\right)\\
\end{align*}
and the desired formula follows.
\end{proof}
\bg

\section{Asymptotic expansions for certain trigonometric sums}\label{sec9}
\bn
\qquad In this section we aim to exploit the results of the previous sections to study the following trigonometric sums 
defined for a positive integer $p$ by the formul\ae:
\begin{align}
I_p&=\sum_{k=1}^{p-1}\frac{1}{\sin(k\pi /p)}=\sum_{k=1}^{p-1}\csc\left(\frac{k\pi}{p}\right)\label{E:I}\\
J_p&=\sum_{k=1}^{p-1}k\cot\left(\frac{k\pi}{p}\right)\label{E:J}
\end{align}
with empty sums interpreted as $0$. 
\bg
While there is a of favourable result \cite{kou2} concerning the sum $\sum_{k=1}^{p}\sec\left(\frac{2k\pi}{2p+1}\right)$,
 and many favourable results \cite{chen} concerning the power sums
$\sum_{k=1}^{p-1}\csc^{2n}(k\pi/p)$,  it seems that there is no known closed  form for $I_p$, and the same can be said
about the sum $J_p$. Therefore, we will look for  asymptotic expansions for these sums and will give some tight inequalities that bound $I_p$ and $J_p$.
This investigation complements the work of H. Chen  in \cite[Chapter 7.]{chen2}, and answers an open question raised there.
\bg
\qquad In the next lemma we give some equivalent forms for the trigonometric sums under consideration.
\bg
\begin{lemma}\label{lm91} For a positive integer $p$ let
\begin{alignat*}{2}
 K_p&=\sum_{k=1}^{p-1}\tan\left(\frac{k\pi}{2p}\right),
&\qquad \widetilde{K}_p&=\sum_{k=1}^{p-1}\cot\left(\frac{k\pi}{2p}\right),\\
L_p&=\sum_{k=1}^{p-1}\frac{k}{\sin(k\pi/p)},
&\qquad M_p&=\sum_{k=0}^{p-1}(2k+1)\cot\left(\frac{(2k+1)\pi}{2p}\right)
\end{alignat*}
Then,

$\ds
\begin{matrix}
\hfill i.&\hfill K_p=&\widetilde{K}_p=I_p.\hfill\label{lm911}\\
\hfill ii.&\hfill L_p=&(p/2)\,I_p.\hfill\label{lm912}\\
\hfill iii.&\hfill M_p=&(p/2)\,J_{2p}-2J_p=-p\,I_p.\hfill\label{lm913}\\
\end{matrix}
$
\end{lemma}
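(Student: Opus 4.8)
\emph{Plan.} All three assertions will be obtained by elementary trigonometry. The recurring tools are the two reflections $k\mapsto p-k$ and $j\mapsto 2p-j$, the double-angle identity $\csc(2\theta)=\cot\theta-\cot(2\theta)$ (valid whenever $2\theta\notin\pi\ent$), and the vanishing of $\sum_{k=1}^{p-1}\cot(k\pi/p)$, which is immediate from $k\mapsto p-k$ since $\cot((p-k)\pi/p)=-\cot(k\pi/p)$. For the first assertion, note that for $1\le k\le p-1$ the number $k\pi/(2p)$ is not a multiple of $\pi/2$, so $\csc(k\pi/p)=\cot(k\pi/(2p))-\cot(k\pi/p)$; summing over $k$ and using $\sum_{k=1}^{p-1}\cot(k\pi/p)=0$ gives $I_p=\widetilde K_p$. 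Replacing $k$ by $p-k$ in $\widetilde K_p$ and using $\cot((p-k)\pi/(2p))=\tan(k\pi/(2p))$ gives $\widetilde K_p=K_p$, so $K_p=\widetilde K_p=I_p$. For the second assertion, applying $k\mapsto p-k$ to $L_p$ and using $\sin((p-k)\pi/p)=\sin(k\pi/p)$ gives $L_p=\sum_{k=1}^{p-1}(p-k)/\sin(k\pi/p)$; adding this to the definition of $L_p$ yields $2L_p=p\sum_{k=1}^{p-1}\csc(k\pi/p)=pI_p$, that is $L_p=(p/2)I_p$.

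For the third assertion I would argue in two steps. First, split $J_{2p}=\sum_{j=1}^{2p-1}j\cot(j\pi/(2p))$ according to the parity of $j$: the even indices $j=2k$ with $1\le k\le p-1$ contribute $\sum 2k\cot(k\pi/p)=2J_p$, while the odd indices $j=2k+1$ with $0\le k\le p-1$ contribute precisely $M_p$; hence $M_p=J_{2p}-2J_p$. Second, identify $J_{2p}-2J_p$ with $-pI_p$. In $J_{2p}$ the index $j=p$ contributes $p\cot(\pi/2)=0$ and may be discarded; for every $j\ne p$ the double-angle identity gives $\cot(j\pi/(2p))=\csc(j\pi/p)+\cot(j\pi/p)$, so $J_{2p}$ splits into a ``cosecant part'' and a ``cotangent part.'' In each part, pair the index $m\in\{1,\dots,p-1\}$ with $m+p$. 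Since $\csc((m+p)\pi/p)=-\csc(m\pi/p)$, the cosecant pair contributes $(m-(m+p))\csc(m\pi/p)=-p\csc(m\pi/p)$, so the cosecant part equals $-p\sum_{m=1}^{p-1}\csc(m\pi/p)=-pI_p$. Since $\cot((m+p)\pi/p)=\cot(m\pi/p)$, the cotangent pair contributes $(2m+p)\cot(m\pi/p)$, so the cotangent part equals $2\sum_{m=1}^{p-1}m\cot(m\pi/p)+p\sum_{m=1}^{p-1}\cot(m\pi/p)=2J_p$. Hence $J_{2p}=-pI_p+2J_p$, and combined with the first step this gives $M_p=J_{2p}-2J_p=-pI_p$.

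The only step that is not a pure reflection is the second half of the third assertion, and that is where the work lies: the key is to recognize that $\cot\theta=\csc(2\theta)+\cot(2\theta)$ trades the arguments $j\pi/(2p)$ for arguments $j\pi/p$, after which the antiperiodicity $\csc(x+\pi)=-\csc x$ and the periodicity $\cot(x+\pi)=\cot x$ collapse the double-length sums into sums over $1\le m\le p-1$, producing the constants $-p$ and $0$ that yield $-pI_p$ and $2J_p$ respectively. Some care is needed at the degenerate index $j=p$, where the splitting $\cot(j\pi/(2p))=\csc(j\pi/p)+\cot(j\pi/p)$ fails; but since $\cot(\pi/2)=0$ that term carries no weight in $J_{2p}$ and can simply be omitted.
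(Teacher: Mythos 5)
Your proof is correct, and it is in the same elementary spirit as the paper's (index reflections plus double-angle identities), though the details diverge in two places worth noting. For \emph{(i)} the paper adds $K_p$ and $\widetilde K_p$ and uses $\tan\theta+\cot\theta=2\csc(2\theta)$, whereas you use $\csc(2\theta)=\cot\theta-\cot(2\theta)$ together with the vanishing of $\sum_{k=1}^{p-1}\cot(k\pi/p)$; both are fine. For \emph{(iii)} the paper folds $J_{2p}$ by the reflection $k\mapsto 2p-k$ and then invokes part \emph{(ii)} via $\cot(\theta/2)-\cot\theta=\csc\theta$ to land on $M_p=2L_p-2pI_p=-pI_p$; your pairing $m\leftrightarrow m+p$ with the antiperiodicity of $\csc$ and periodicity of $\cot$ reaches $J_{2p}=-pI_p+2J_p$ directly, without using \emph{(ii)} at all, which makes \emph{(iii)} self-contained. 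One last remark: the displayed statement reads $M_p=(p/2)\,J_{2p}-2J_p$, but the factor $p/2$ is a typo (check $p=3$: $M_3=-4\sqrt3=J_6-2J_3$, while $(3/2)J_6-2J_3\ne-4\sqrt3$); what you proved, $M_p=J_{2p}-2J_p=-pI_p$, is exactly what the paper's own proof establishes.
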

\begin{proof}
First, note that the change of summation variable $j\leftarrow p-j$ proves that $K_p=\widetilde{K}_p$. So,
using the trigonometric identity $\tan\theta+\cot\theta=2\csc(2\theta)$ we obtain
\begin{align*}
2K_p&=K_p+\widetilde{K}_p=\sum_{k=1}^{p-1}\left(\tan\left(\frac{k\pi}{2p}\right)+\cot\left(\frac{k\pi}{2p}\right)\right)\\
&=2\sum_{k=1}^{p-1}\csc\left(\frac{k\pi}{p}\right)=2I_p
\end{align*}
This proves $(i)$.\bg
Similarly, $(ii)$ follows from the change of summation variable $j\leftarrow p-j$ in $L_p$:
\[
L_p=\sum_{j=1}^{p-1}\frac{p-k}{\sin(k\pi/p)}=pI_p-L_p
\]
Also,
\begin{align*}
M_p&=\sum_{\substack{1\leq k<2p\\ k \text{ odd}
}} k\cot\left(\frac{k\pi}{2p}\right)=\sum_{k=1}^{2p-1} k\cot\left(\frac{k\pi}{2p}\right)- \sum_{\substack{1\leq k<2p\\ k \text{ even}
}} k\cot\left(\frac{k\pi}{2p}\right)\\
&=\sum_{k=1}^{2p-1} k\cot\left(\frac{k\pi}{2p}\right)-2 \sum_{k=1}^{p-1} k\cot\left(\frac{k\pi}{p}\right)=J_{2p}-2J_p.
\end{align*}
But
\begin{align*}
J_{2p}&=\sum_{k=1}^{p-1} k\cot\left(\frac{k\pi}{2p}\right)+\sum_{k=p+1}^{2p-1} k\cot\left(\frac{k\pi}{2p}\right)\\
&=\sum_{k=1}^{p-1} k\cot\left(\frac{k\pi}{2p}\right)-\sum_{k=1}^{p-1} (2p-k)\cot\left(\frac{k\pi}{2p}\right)\\
&=2\sum_{k=1}^{p-1} k\cot\left(\frac{k\pi}{2p}\right)-2p\widetilde{K}_p
\end{align*}
Thus, using $(i)$ and the trigonometric identity $\cot(\theta/2)-\cot\theta=\csc\theta$ we obtain
\begin{align*}
M_p&=J_{2p}-2J_p=2\sum_{k=1}^{p-1} k\left(\cot\left(\frac{k\pi}{2p}\right)-\cot\left(\frac{k\pi}{p}\right)\right)
-2pI_p\\
&=2\sum_{k=1}^{p-1}k\csc\left(\frac{k\pi}{p}\right)-2pI_p=2L_p-2pI_p=-pI_p
\end{align*}
This concludes the proof of $(iii)$.
\end{proof}

\bg

\begin{proposition}\label{pr92}
For $p\geq2$, let $I_p$ be the sum of cosecants defined by the \eqref{E:I}. Then
\begin{align*}
I_p&=-\frac{2\ln 2}{\pi}+\frac{2p}{\pi}E_p,\\
&=-\frac{2\ln 2}{\pi}+\frac{2p}{\pi}\left(\ln p+\gamma-\ln(\pi/2)\right)+\frac{4p}{\pi}D_p,
\end{align*}
where $D_p$ and $E_p$ are defined by formul\ae~ \eqref{E:Dp} and \eqref{E:Ep} respectively.
\end{proposition}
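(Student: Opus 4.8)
The plan is to start from the simple-fraction expansion of the cosecant recorded in \eqref{E:secf} and to sum it over the arguments $j\pi/p$, $1\le j\le p-1$; this turns $I_p$ into a doubly-indexed sum of reciprocals of integers that reassembles into the alternating harmonic-number series $E_p$ of \eqref{E:Ep}.

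First I would apply \eqref{E:secf} at $z=j/p$, for each fixed $j\in\{1,\dots,p-1\}$, to write
\[
\csc\!\left(\frac{j\pi}{p}\right)=\frac1\pi\cdot\frac{\pi}{\sin(\pi j/p)}
=\frac{p}{\pi}\lim_{n\to\infty}\sum_{k=-n}^{n}\frac{(-1)^{k}}{j-kp}.
\]
Summing over $j$ — a finite sum, so the limit and the two summations may be interchanged freely — gives
\[
I_p=\frac{p}{\pi}\lim_{n\to\infty}\sum_{k=-n}^{n}(-1)^{k}S_k,\qquad
S_k\egdef\sum_{j=1}^{p-1}\frac1{j-kp}.
\]

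The next step is to evaluate $S_k$ in terms of harmonic numbers. For $k\ge1$ the integers $j-kp$ with $1\le j\le p-1$ are exactly the integers strictly between $-kp$ and $-(k-1)p$, whence $S_k=-(H_{kp}-H_{(k-1)p})+\tfrac1{kp}$; likewise $S_{-\ell}=(H_{(\ell+1)p}-H_{\ell p})-\tfrac1{(\ell+1)p}$ for $\ell\ge1$, and $S_0=H_{p-1}=(H_p-H_0)-\tfrac1p$. Setting $w_n=H_{p(n+1)}-H_{pn}$ (so that $E_p=\sum_{n\ge0}(-1)^n w_n$), each $S_k$ is thus, up to an explicit correction term whose denominator is a multiple of $p$, equal to $\pm w_{|k|}$ or $\pm w_{|k|-1}$. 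Substituting these expressions and reindexing, the symmetric partial sum collapses to
\[
\sum_{k=-n}^{n}(-1)^{k}S_k
=2\sum_{\ell=0}^{n-1}(-1)^{\ell}w_\ell+(-1)^{n}w_n
+\frac1p\left(\sum_{k=1}^{n}\frac{(-1)^{k}}{k}+\sum_{k=1}^{n+1}\frac{(-1)^{k}}{k}\right).
\]
Letting $n\to\infty$: since $w_n=\sum_{i=pn+1}^{p(n+1)}1/i\to0$, the first two terms tend to $2E_p$, while both bracketed alternating harmonic sums tend to $-\ln2$; hence
\[
I_p=\frac{p}{\pi}\left(2E_p-\frac{2\ln2}{p}\right)=-\frac{2\ln2}{\pi}+\frac{2p}{\pi}E_p,
\]
which is the first formula. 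The second formula then follows at once by substituting the identity $E_p=\ln p+\gamma-\ln(\pi/2)+2D_p$ from Lemma~\ref{lm84}.

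The partial-fraction substitution and the final use of Lemma~\ref{lm84} are routine; the delicate point is the bookkeeping in evaluating and recombining the $S_k$, where one must track carefully the off-by-one between $H_{(\ell+1)p-1}$ and $H_{(\ell+1)p}$ so that the ``main'' part matches the exact definition of $E_p$ and the accumulated correction terms assemble to precisely $2\ln2/p$. One should also note explicitly that rearranging the symmetric partial sums is legitimate, since for each fixed $n$ everything in sight is a finite sum while the limit is already known to exist.
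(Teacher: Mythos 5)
Your proposal is correct and takes essentially the same route as the paper: both start from the partial-fraction expansion \eqref{E:secf} evaluated at $\alpha=j/p$, convert the resulting double sum into differences of harmonic numbers, collapse it to $2E_p-2\ln 2/p$, and finish with Lemma~\ref{lm84}. Your bookkeeping via the symmetric partial sums $S_k$ checks out (I verified the identities $S_k=-w_{k-1}+\tfrac1{kp}$, $S_{-\ell}=w_\ell-\tfrac1{(\ell+1)p}$, $S_0=w_0-\tfrac1p$ and the resulting recombination), so this is only a cosmetic reorganization of the paper's argument.
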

\begin{proof}
Indeed, our starting point will be the ``simple fractions''  expansion \eqref{E:secf} of the cosecant function:
\[
\frac{\pi}{\sin(\pi\alpha)}=\sum_{n\in\ent}\frac{(-1)^n}{\alpha-n}=\frac{1}{\alpha}+\sum_{n=1}^\infty(-1)^n\left(\frac{1}{\alpha-n}+
\frac{1}{\alpha+n}\right)
\]
which is valid for $\alpha\in\comp\setminus\ent$. Using this formula with $\alpha=k/p$ for  $k=1,2,\ldots,p-1$ and adding, we conclude that
\begin{align*}
\frac{\pi}{p} I_p&=\sum_{k=1}^{p-1}\frac{1}{k}+\sum_{n=1}^\infty(-1)^n\sum_{k=1}^{p-1}\left(\frac{1}{k-np}+
\frac{1}{k+n p}\right)\\
&=\sum_{k=1}^{p-1}\frac{1}{k}+ \sum_{n=1}^\infty(-1)^n\left(-\sum_{j=p(n-1)+1}^{pn-1}\frac{1}{j}+
\sum_{j=pn+1}^{p(n+1)-1}\frac{1}{j}\right),
\end{align*}
\bg
and this result can be expressed in terms of the Harmonic numbers as follows
\begin{align*}
\frac{\pi}{p} I_p&=H_{p-1}+ \sum_{n=1}^\infty(-1)^n\left(- H_{pn-1}+H_{p(n-1)}+H_{p(n+1)-1}-H_{pn}
\right)\\
&=H_{p-1}+ \sum_{n=1}^\infty(-1)^n\left(H_{p(n+1)}-2H_{pn}+H_{p(n-1)}\right)
+\frac{1}{p}\sum_{n=1}^\infty(-1)^n\left(\frac{1}{n}-\frac{1}{n+1}
\right)\\
&=H_{p-1}+\sum_{n=1}^\infty(-1)^n\left(H_{p(n+1)}-2H_{pn}+H_{p(n-1)}\right)
+\frac{1}{p}\left(\sum_{n=1}^\infty\frac{(-1)^n}{n}+\sum_{n=2}^\infty\frac{(-1)^n}{n}\right)\\
&=H_{p}+\sum_{n=1}^\infty(-1)^n\left(H_{p(n+1)}-2H_{pn}+H_{p(n-1)}\right)
-\frac{2}{p}\sum_{n=1}^\infty(-1)^{n-1}\frac{1}{n}\\
&=H_{p}-\frac{2\ln 2}{p}+\sum_{n=1}^\infty(-1)^n\left(H_{p(n+1)}-2H_{pn}+H_{p(n-1)}\right).
\end{align*}
Thus
\begin{align*}
\frac{\pi}{p} I_p+\frac{2\ln 2}{p}&=
H_{p}+\sum_{n=1}^\infty(-1)^n\left(H_{p(n+1)}-H_{pn} \right)
+\sum_{n=1}^\infty(-1)^n\left(H_{p(n-1)}-H_{pn}\right)\\
&=\sum_{n=0}^\infty(-1)^n\left(H_{p(n+1)}-H_{pn}\right)
+\sum_{n=1}^\infty(-1)^n\left(H_{p(n-1)}-H_{pn}\right)\\
&=E_p+E_p=2E_p,
\end{align*}
and the desired formula follows according to Lemma~\ref{lm84}.
\end{proof}

\qquad Combining Proposition~\ref{pr92} and Proposition~\ref{pr83}, we obtain:
\begin{proposition}\label{pr93} 
For $p\geq2$ and $m\geq 1$, we have
\[
 \pi I_p= 2p\ln p+2(\gamma-\ln( \pi/2))p -\sum_{k=1}^{m-1} \frac{2b_{2k}\eta(2k)}{ k \cdot p^{2k-1}}
+(-1)^m\frac{2\eta(2m)}{ m\cdot p^{2m-1}}\eps'_{p,m}
\]
with $\ds 0<\eps'_{p,m}<\abs{b_{2m}}$.
\end{proposition}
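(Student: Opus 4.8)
The plan is to obtain this expansion by simply chaining together two results already proved: the reduction of $I_p$ to $D_p$ from Proposition~\ref{pr92}, and the asymptotic expansion of $D_p$ from Proposition~\ref{pr83}. First I would recall from Proposition~\ref{pr92} that, for $p\geq2$,
\[
\pi I_p = -2\ln 2 + 2p\bigl(\ln p + \gamma - \ln(\pi/2)\bigr) + 4p\,D_p,
\]
so that everything reduces to inserting the expansion of $D_p$ and collecting terms.

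Next I would substitute the formula of Proposition~\ref{pr83}, namely
\[
D_p = \frac{\ln 2}{2p} - \sum_{k=1}^{m-1}\frac{b_{2k}\eta(2k)}{2k\cdot p^{2k}} + (-1)^m\frac{\eta(2m)}{2m\cdot p^{2m}}\,\eps'_{p,m},\qquad 0<\eps'_{p,m}<\abs{b_{2m}}.
\]
Multiplying by $4p$ gives $4p\,D_p = 2\ln 2 - \sum_{k=1}^{m-1}\frac{2b_{2k}\eta(2k)}{k\cdot p^{2k-1}} + (-1)^m\frac{2\eta(2m)}{m\cdot p^{2m-1}}\eps'_{p,m}$; the constant $2\ln 2$ here exactly cancels the $-2\ln 2$ inherited from Proposition~\ref{pr92}, and what remains is precisely the claimed identity. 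The inequality $0<\eps'_{p,m}<\abs{b_{2m}}$ requires nothing new: the $\eps'_{p,m}$ in the statement is, by convention, the same quantity furnished by Proposition~\ref{pr83}, so the bound is carried over verbatim.

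There is essentially no obstacle; the only thing needing care is the bookkeeping of constants and powers of $p$—verifying the cancellation of the two $2\ln 2$ terms and that multiplying a term of size $p^{-2k}$ by $4p$ produces a term of size $p^{-(2k-1)}$ with the displayed coefficient. One should also note the hypothesis $p\geq2$, which is exactly what is needed for $I_p$ (and hence $D_p$, $E_p$) to be defined and for Propositions~\ref{pr92} and~\ref{pr83} to apply.
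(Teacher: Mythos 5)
Your proposal is correct and is exactly the paper's argument: the paper derives Proposition~\ref{pr93} by the same substitution of the expansion of $D_p$ from Proposition~\ref{pr83} into the identity of Proposition~\ref{pr92}, with the two $2\ln 2$ terms cancelling as you describe. The bookkeeping of the factor $4p$ and the carried-over bound on $\eps'_{p,m}$ are all in order.
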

\qquad Using the values of the $\eta(2k)$'s from  Corollary \ref{cor31}, and considering separately the cases $m$ even and $m$ odd we obtain
the following corollary.
 
\begin{corollary}\label{cor94}
For every positive integer $p$ and every nonnegative integer $n$, the sum of cosecants $I_p$ defined by \eqref{E:I} satisfies the following
inequalities:
\begin{align*}
I_p&<\frac{2p}{\pi}(\ln p+\gamma-\ln(\pi/2))+
\sum_{k=1}^{2n}(-1)^{k}\frac{(2^{2k}-2)b_{2k}^2}{k\cdot(2k)!}\left(\frac{\pi}{p}\right)^{2k-1},\\
\noalign{\text{and}}
I_p&>\frac{2p}{\pi}(\ln p+\gamma-\ln(\pi/2))+
\sum_{k=1}^{2n+1}(-1)^{k}\frac{(2^{2k}-2)b_{2k}^2}{k\cdot(2k)!}\left(\frac{\pi}{p}\right)^{2k-1}.
\end{align*}
\end{corollary}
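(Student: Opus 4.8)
The plan is to obtain Corollary~\ref{cor94} directly from Proposition~\ref{pr93}, the only work being to pin down the sign of the remainder term and to substitute the closed form of $\eta(2k)$ from Corollary~\ref{cor31}.

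First I would note that in the expansion of Proposition~\ref{pr93} the remainder is $(-1)^m\frac{2\eta(2m)}{m\cdot p^{2m-1}}\eps'_{p,m}$, and that both factors $\eta(2m)$ and $\eps'_{p,m}$ are strictly positive. Positivity of $\eps'_{p,m}$ is asserted in Proposition~\ref{pr93}, and $\eta(2m)>0$ follows from Corollary~\ref{cor31} together with Remark~\ref{rm1}: since $(-1)^{m-1}b_{2m}=\abs{b_{2m}}$, we get $\eta(2m)=\frac{(2^{2m}-2)\pi^{2m}}{2\cdot(2m)!}\abs{b_{2m}}>0$ (alternatively, $\eta(2m)$ is a convergent alternating series with positive first term). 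Hence the remainder has sign $(-1)^m$, so that
\[
2p\ln p+2(\gamma-\ln(\pi/2))p-\sum_{k=1}^{m-1}\frac{2b_{2k}\eta(2k)}{k\cdot p^{2k-1}}
\]
is a lower bound for $\pi I_p$ when $m$ is even and an upper bound when $m$ is odd.

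Next I would rewrite each summand. By Corollary~\ref{cor31} we have $\eta(2k)=(-1)^{k-1}\frac{(2^{2k}-2)\pi^{2k}}{2\cdot(2k)!}b_{2k}$, so
\[
-\frac{2b_{2k}\eta(2k)}{k\cdot p^{2k-1}}=(-1)^{k}\frac{(2^{2k}-2)\,b_{2k}^{2}}{k\cdot(2k)!}\cdot\frac{\pi^{2k}}{p^{2k-1}}=(-1)^{k}\frac{(2^{2k}-2)\,b_{2k}^{2}}{k\cdot(2k)!}\,\pi\left(\frac{\pi}{p}\right)^{2k-1},
\]
which is where the square $b_{2k}^{2}$ enters. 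Dividing the two inequalities above by $\pi$, and using $2p\ln p+2(\gamma-\ln(\pi/2))p=2p(\ln p+\gamma-\ln(\pi/2))$, yields precisely the two displays of Corollary~\ref{cor94}: taking $m=2n+1$ (odd) gives the upper bound, in which the sum runs up to $k=2n$, and taking $m=2n+2$ (even) gives the lower bound, in which the sum runs up to $k=2n+1$.

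I do not expect a genuine obstacle: the analytic content lives entirely in Proposition~\ref{pr93}, and what remains is the parity bookkeeping for the remainder together with a one-line substitution. The only point worth double-checking carefully is the sign of $\eta(2m)$, since it is exactly this positivity, combined with the parity of $m$, that fixes the direction of each inequality.
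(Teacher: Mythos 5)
Your proposal is correct and is exactly the paper's route: the paper derives the corollary from Proposition~\ref{pr93} by "using the values of the $\eta(2k)$'s from Corollary~\ref{cor31}, and considering separately the cases $m$ even and $m$ odd," which is precisely your sign-of-the-remainder plus substitution argument, with the same choices $m=2n+1$ and $m=2n+2$. The algebra producing $(-1)^k\frac{(2^{2k}-2)b_{2k}^2}{k\cdot(2k)!}\left(\frac{\pi}{p}\right)^{2k-1}$ checks out.
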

\bg
\qquad As an example, for $n=0$ we obtain the following inequality, valid for every $p\geq1$ :
\[
\frac{2p}{\pi}(\ln p+\gamma-\ln(\pi/2))-\frac{\pi}{36p}
<I_p<\frac{2p}{\pi}(\ln p+\gamma-\ln(\pi/2)).
\]
This answers positively the open problem proposed in \cite[Section 7.4]{chen2}.
\bg
\begin{remark}
The asymptotic expansion of $I_p$ was proposed as an exercise in \cite[Exercise~13, p.~460]{hen}, and 
it was attributed to P. Waldvogel, but the result there is less precise than Corollary~\ref{cor94}
\end{remark}
\bg
\qquad Now we turn our attention to the other trigonometric sum $J_p$.
Next we find an analogous result to Proposition~\ref{pr92}.
\bg
\begin{proposition}\label{pr95}
For a positive integer $p$, 
let $J_p$ be the sum of cotangents defined by the \eqref{E:J}. Then
\[
\pi J_p= -p^2\ln p+(\ln(2\pi)-\gamma)p^2 -p+2p^2C_p
\]
where $C_p$ is defined by the formula \eqref{E:Cp}.
\end{proposition}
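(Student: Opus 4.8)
The plan is to closely follow the argument of Proposition~\ref{pr92}, with the cosecant partial fraction expansion \eqref{E:secf} replaced by the cotangent expansion \eqref{E:cotf}. Applying \eqref{E:cotf} with $z=j/p$ (legitimate since $j/p\notin\ent$ for $1\le j\le p-1$), multiplying by $j$, and summing over $j=1,\dots,p-1$, one obtains
\[
\pi J_p=p(p-1)+p\lim_{N\to\infty}\sum_{n=1}^{N}\sum_{j=1}^{p-1}j\left(\frac{1}{j-np}+\frac{1}{j+np}\right),
\]
where the interchange of the finite $j$-sum with the limit over $N$ is immediate.

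The next step is to evaluate the inner sum in closed form. As in Proposition~\ref{pr92}, one has $\sum_{j=1}^{p-1}\frac{1}{j-np}=-(H_{np-1}-H_{(n-1)p})$ and $\sum_{j=1}^{p-1}\frac{1}{j+np}=H_{(n+1)p-1}-H_{np}$; writing $\frac{j}{j\mp np}=1\pm\frac{np}{j\mp np}$ and using $H_{mp-1}=H_{mp}-\frac{1}{mp}$ then gives
\[
\sum_{j=1}^{p-1}j\left(\frac{1}{j-np}+\frac{1}{j+np}\right)=2p-\frac{1}{n+1}-np\bigl(H_{(n+1)p}-H_{(n-1)p}\bigr).
\]
Summing over $1\le n\le N$ and treating $\sum_{n=1}^{N}n\bigl(H_{(n+1)p}-H_{(n-1)p}\bigr)$ by the reindexing (Abel-type) identity
\[
\sum_{n=1}^{N}n\bigl(H_{(n+1)p}-H_{(n-1)p}\bigr)=-2\sum_{k=1}^{N-1}H_{kp}+(N-1)H_{Np}+NH_{(N+1)p},
\]
one arrives at
\[
\sum_{n=1}^{N}\sum_{j=1}^{p-1}j\left(\frac{1}{j-np}+\frac{1}{j+np}\right)=2pN-H_{N+1}+1+2p\sum_{k=1}^{N-1}H_{kp}-p(N-1)H_{Np}-pNH_{(N+1)p}.
\]

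Finally I would pass to the limit $N\to\infty$. Substituting the defining relation $H_{kp}=c_{pk}+\ln p+\ln k+\gamma+\frac{1}{2kp}$ (see \eqref{E:Cp}) turns $2p\sum_{k=1}^{N-1}\ln k$ into $2p\ln((N-1)!)$, to which Stirling's formula $\ln(m!)=m\ln m-m+\tfrac12\ln(2\pi m)+o(1)$ applies; the part $2p\sum_{k=1}^{N-1}\frac{1}{2kp}=H_{N-1}$ cancels $-H_{N+1}$ up to a quantity tending to $0$; the boundary contributions $p(N-1)c_{pN}$ and $pNc_{p(N+1)}$ tend to $0$ because $c_{pn}=\mathcal{O}(1/n^2)$; and $2p\sum_{k=1}^{N-1}c_{pk}\to 2pC_p$. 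Collecting the remaining terms, the contributions of order $N$ and of order $\ln N$ cancel out, leaving $\lim_{N\to\infty}(\cdots)=2pC_p-p\ln p+(\ln(2\pi)-\gamma)p-p$; inserting this into $\pi J_p=p(p-1)+p\lim_{N\to\infty}(\cdots)$ and using $p(p-1)-p^2=-p$ gives the claimed formula. The only delicate point is this last limit, where several divergent terms must be shown to cancel; in particular the constant $\ln(2\pi)$ is produced exactly by the $\tfrac12\ln(2\pi m)$ term of Stirling's formula.
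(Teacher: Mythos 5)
Your proof is correct, and I verified the key identities: the inner sum $\sum_{j=1}^{p-1}j\bigl(\frac{1}{j-np}+\frac{1}{j+np}\bigr)=2p-\frac{1}{n+1}-np\bigl(H_{(n+1)p}-H_{(n-1)p}\bigr)$, the Abel-type reindexing, and the final cancellation of the terms of order $N$ and $\ln N$ (together with the residual $+1$ from $-(H_{N+1}-1)$ against the $-1$ coming from $-\frac{N-1}{2N}-\frac{N}{2(N+1)}$) all check out, yielding exactly $-p\ln p+(\ln(2\pi)-\gamma)p-p+2pC_p$ for the limit. However, your route is genuinely different from the paper's. You transplant the method of Proposition~\ref{pr92} to the cotangent expansion \eqref{E:cotf} and evaluate the resulting divergent-looking sum directly, with Stirling's formula supplying the constant $\ln(2\pi)$. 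The paper instead proceeds by a self-similarity argument: it introduces $F_p=\frac{\ln p+\gamma-\ln(2\pi)}{2}+\frac{1}{2p}+\frac{\pi}{2p^2}J_p$, uses the duplication identity $J_{2p}-2J_p=-pI_p$ of Lemma~\ref{lm91} together with the already-proved cosecant formula (Proposition~\ref{pr92}) to show $C_p-F_p=2(C_{2p}-F_{2p})$, and then kills $C_p-F_p$ by iterating this relation against the decay estimate $C_p-F_p=\mathcal{O}(1/p^2)$, which in turn requires the Euler--Maclaurin expansion of $J_p$ from Proposition~\ref{pr72}. Your argument is more self-contained (no appeal to Lemma~\ref{lm91}, Proposition~\ref{pr72}, or the $I_p$ result) at the price of invoking Stirling's formula and a somewhat delicate bookkeeping of cancelling divergent terms; the paper's argument avoids Stirling entirely but leans on a larger share of the surrounding machinery. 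Both are valid; it would strengthen your write-up to display the final collection of terms explicitly, since that is the one place where an arithmetic slip would silently change the constant.
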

\begin{proof}
Recall that $c_{n}=H_n-\ln n-\gamma-\frac{1}{2n}$ satisfies
 $c_n=\mathcal{O}(1/n^2)$. Thus, both series 
\[
C_p=\sum_{n=1}^\infty c_{pn}\quad\text{ and }\quad \widetilde{C}_p=\sum_{n=1}^\infty(-1)^{n-1} c_{pn}
\] are convergent. Further, we note that $\widetilde{C}_p=D_p-\frac{\ln 2}{2p}$ where $D_p$ is defined by \eqref{E:Dp}.

According to Remark~\ref{rm98} we have
\begin{equation}\label{E:pr991}
\widetilde{C}_p =\frac{\ln(\pi/2)-\gamma-\ln p}{2}+\frac{\pi}{4p}I_p.
\end{equation}
Now, noting that
\begin{align*}
C_p&=\sum_{\substack{n\geq1\\
                                     n\,\text{odd}}}c_{pn}
         +\sum_{\substack{n\geq1\\
                                     n\,\text{even}}}c_{pn}
=\sum_{\substack{n\geq1\\
                             n\,\text{odd}}}c_{pn}+\sum_{n=1}^\infty c_{2pn}\\
\widetilde{C}_p&=\sum_{\substack{n\geq1\\
                                      n\,\text{odd}}}c_{pn}
          -\sum_{\substack{n\geq1\\
                                      n\,\text{even}}}c_{pn}
=\sum_{\substack{n\geq1\\
                             n\,\text{odd}}}c_{pn}-\sum_{n=1}^\infty c_{2pn}
\end{align*}
we conclude that $C_p-\widetilde{C}_p=2C_{2p}$, or equivalently
\begin{equation}\label{E:pr992}
C_p-2C_{2p}=\widetilde{C}_p
\end{equation}
On the other hand, for a positive integer $p$ let us define $F_p$ by
\begin{equation}\label{E:Fp}
F_p=\frac{\ln p+\gamma-\ln(2\pi)}{2}+\frac{1}{2p}+\frac{\pi}{2p^2}J_p.
\end{equation}
It is easy to check, using Lemma~\ref{lm91} $(iii)$, that
\begin{align}\label{E:pr994}
F_p-2F_{2p}&=\frac{\ln(\pi/2)-\ln p-\gamma}{2} -\frac{\pi}{4p^2}(J_{2p}-2J_p)\notag\\
&=\frac{\ln(\pi/2)-\ln p-\gamma}{2} +\frac{\pi}{4p}I_p
\end{align}
We conclude from \eqref{E:pr992} and \eqref{E:pr994} that $C_p-2C_{2p}=F_p-2F_{2p}$, or equivalently
\[C_p-F_p=2(C_{2p}-F_{2p}).\]
 Hence, 
\begin{equation}\label{E:pr995}
\forall\,m\geq1,\qquad C_p-F_p=2^m(C_{2^mp}-F_{2^mp})
\end{equation}
Now, using  Proposition~\ref{pr81} to replace $H_p$ in Proposition~\ref{pr72},  we obtain
\begin{align*}
\frac{\pi}{p^2}J_p&=\ln(2\pi)-H_p-\frac{1}{2p}+\mathcal{O}\left(\frac{1}{p^2}\right)\\
&=\ln(2\pi)-\ln p-\gamma-\frac{1}{p} +\mathcal{O}\left(\frac{1}{p^2}\right)
\end{align*}
Thus $F_p=\mathcal{O}\left(\frac{1}{p^2}\right)$. Similarly, from the fact that $c_n=\mathcal{O}\left(\frac{1}{n^2}\right)$
we conclude also that $C_p=\mathcal{O}\left(\frac{1}{p^2}\right)$. Consequently, there exists a constant $\kappa$  such that, for large values of $p$
we have $\abs{C_p-F_p}\leq \kappa/p^2$. So, from \eqref{E:pr995}, we see that for large values of $m$ we have
\[
\abs{C_p-F_p}\leq\frac{\kappa}{2^mp^2}
\]
and letting $m$ tend to $+\infty$ we obtain $C_p=F_p$, which is equivalent to the announced result.
\end{proof}
\bg
\qquad Combining Proposition~\ref{pr95} and Proposition~\ref{pr82}, we obtain:
\begin{proposition}\label{pr96} 
For $p\geq2$ and $m\geq 1$, we have
\[
\pi J_p= -p^2\ln p+(\ln(2\pi)-\gamma)p^2 -p -\sum_{k=1}^{m-1}\frac{b_{2k}\zeta(2k)}{ k\cdot p^{2k-2}}
+(-1)^m\frac{\zeta(2m)}{m\cdot p^{2m-2}}\eps_{p,m},
\]
with $0<\eps_{p,m}<\abs{b_{2m}}$, where $\zeta$ is the well-known Riemann zeta function.
\end{proposition}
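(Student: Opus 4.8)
The plan is to combine the exact formula of Proposition~\ref{pr95}, namely
\[
\pi J_p = -p^2\ln p + (\ln(2\pi)-\gamma)p^2 - p + 2p^2 C_p,
\]
with the asymptotic expansion of $C_p$ furnished by Proposition~\ref{pr82}. There is essentially nothing to prove beyond an algebraic substitution, so the work is just bookkeeping with the constants and the error term.

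First I would recall from Proposition~\ref{pr82} that, for positive integers $p$ and $m$,
\[
C_p = -\sum_{k=1}^{m-1}\frac{b_{2k}\zeta(2k)}{2k\cdot p^{2k}} + (-1)^m\frac{\zeta(2m)}{2m\cdot p^{2m}}\eps_{p,m},
\qquad 0<\eps_{p,m}<\abs{b_{2m}}.
\]
Then I multiply this identity by $2p^2$, obtaining
\[
2p^2 C_p = -\sum_{k=1}^{m-1}\frac{b_{2k}\zeta(2k)}{k\cdot p^{2k-2}} + (-1)^m\frac{\zeta(2m)}{m\cdot p^{2m-2}}\eps_{p,m},
\]
where I have used $2p^2\cdot\frac{1}{2k\,p^{2k}}=\frac{1}{k\,p^{2k-2}}$ and likewise for the remainder term. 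Substituting this into the formula of Proposition~\ref{pr95} yields exactly
\[
\pi J_p = -p^2\ln p + (\ln(2\pi)-\gamma)p^2 - p -\sum_{k=1}^{m-1}\frac{b_{2k}\zeta(2k)}{k\cdot p^{2k-2}}
+(-1)^m\frac{\zeta(2m)}{m\cdot p^{2m-2}}\eps_{p,m},
\]
with the same $\eps_{p,m}$, hence the same bound $0<\eps_{p,m}<\abs{b_{2m}}$, as required.

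The only point that warrants a moment's care is the hypothesis $p\ge 2$ versus $p\ge 1$: Proposition~\ref{pr95} is stated for every positive integer $p$ while Proposition~\ref{pr82} requires positive integers $p$ and $m$, so the combined statement is valid for $p\ge 1$ and $m\ge1$; stating it for $p\ge2$ is harmless. There is no real obstacle here — the substantive content was already discharged in Propositions~\ref{pr95} and \ref{pr82}, and this proposition merely records their combination.
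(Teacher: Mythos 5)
Your proposal is correct and is exactly what the paper does: the text introduces Proposition~\ref{pr96} with the single line ``Combining Proposition~\ref{pr95} and Proposition~\ref{pr82}, we obtain,'' i.e.\ the same substitution of the expansion of $C_p$ (multiplied by $2p^2$) into the exact formula for $\pi J_p$. The bookkeeping $2p^2\cdot\frac{1}{2k\,p^{2k}}=\frac{1}{k\,p^{2k-2}}$ and the unchanged bound on $\eps_{p,m}$ are all that is needed, and you have them right.
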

\qquad Using the values of the $\zeta(2k)$'s from  Corollary \ref{cor31}, and considering separately
 the cases $m$ even and $m$ odd we obtain
the next corollary.
 
\begin{corollary}\label{cor97}
For every positive integer $p$ and every nonnegative integer $n$, the sum of cotangents $J_p$ defined by \eqref{E:J} satisfies the following
inequalities:
\begin{align*}
J_p&< \frac{1}{\pi}\left(-p^2\ln p+(\ln(2\pi)-\gamma)p^2 -p\right) 
+2\pi\sum_{k=1}^{2n}(-1)^k\frac{b_{2k}^2}{ k\cdot(2k)!} \left(\frac{2\pi}{ p}\right)^{2k-2},\\
\noalign{\text{and}}
J_p&> \frac{1}{\pi}\left(-p^2\ln p+(\ln(2\pi)-\gamma)p^2 -p\right) 
+2\pi\sum_{k=1}^{2n+1}(-1)^k\frac{b_{2k}^2}{ k\cdot(2k)!} \left(\frac{2\pi}{ p}\right)^{2k-2}.
\end{align*}
\end{corollary}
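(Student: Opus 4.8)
The plan is to substitute the closed form of $\zeta(2k)$ from Corollary~\ref{cor31} into the asymptotic expansion of Proposition~\ref{pr96} and then read off the sign of the remainder term. First I would recall that Corollary~\ref{cor31} gives $\zeta(2k)=(-1)^{k-1}\frac{(2\pi)^{2k}}{2\cdot(2k)!}b_{2k}$, so that
\[
\frac{b_{2k}\zeta(2k)}{k\cdot p^{2k-2}}=(-1)^{k-1}\frac{(2\pi)^{2k}}{2k\cdot(2k)!}\cdot\frac{b_{2k}^2}{p^{2k-2}}
=(-1)^{k-1}\frac{2\pi^2\, b_{2k}^2}{k\cdot(2k)!}\left(\frac{2\pi}{p}\right)^{2k-2},
\]
where I used the identity $(2\pi)^{2k}/p^{2k-2}=4\pi^2\bigl(2\pi/p\bigr)^{2k-2}$. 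Plugging this into Proposition~\ref{pr96} and dividing the whole identity by $\pi$ turns the finite sum $-\sum_{k=1}^{m-1}\frac{b_{2k}\zeta(2k)}{k\cdot p^{2k-2}}$ into $2\pi\sum_{k=1}^{m-1}(-1)^{k}\frac{b_{2k}^2}{k\cdot(2k)!}\left(\frac{2\pi}{p}\right)^{2k-2}$, which is exactly the sum appearing in Corollary~\ref{cor97}; the main term becomes $\frac1\pi\bigl(-p^2\ln p+(\ln(2\pi)-\gamma)p^2-p\bigr)$ and the remainder becomes $\frac{(-1)^m}{\pi}\cdot\frac{\zeta(2m)}{m\cdot p^{2m-2}}\eps_{p,m}$.

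Next I would carry out the sign discussion. Since $\zeta(2m)>0$, $m>0$, $p>0$, and $0<\eps_{p,m}<\abs{b_{2m}}$ by Proposition~\ref{pr96}, the remainder $\frac{(-1)^m}{\pi}\cdot\frac{\zeta(2m)}{m\cdot p^{2m-2}}\eps_{p,m}$ is a strictly positive multiple of $(-1)^m$. Choosing $m=2n+1$ makes it strictly negative, so discarding it gives $J_p<$ (main term) $+$ (sum over $1\le k\le 2n$), which is the first inequality; choosing $m=2n+2$ makes it strictly positive, so discarding it gives $J_p>$ (main term) $+$ (sum over $1\le k\le 2n+1$), which is the second. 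Both choices are legitimate for every nonnegative integer $n$ because Proposition~\ref{pr96} holds for all $m\ge1$ (the degenerate case $p=1$, where $J_1=0$, can be checked directly against the displayed expressions).

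There is no real obstacle here beyond bookkeeping. The two points that require care are: matching the parity of the cut-off index $m-1$ of the retained sum to the sign $(-1)^m$ of the discarded remainder, so that the upper bound really corresponds to the even top index $2n$ and the lower bound to the odd top index $2n+1$; and correctly absorbing the constants when rewriting $(2\pi)^{2k}/p^{2k-2}$, so that the coefficient $2\pi$ (and not $4\pi^2$ or $\pi^2$) ends up in front of the sum after the final division by $\pi$.
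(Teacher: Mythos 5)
Your proposal is correct and is exactly the argument the paper intends: it derives Corollary~\ref{cor97} from Proposition~\ref{pr96} by substituting $\zeta(2k)=(-1)^{k-1}\frac{(2\pi)^{2k}}{2\cdot(2k)!}b_{2k}$ from Corollary~\ref{cor31}, dividing by $\pi$, and choosing $m=2n+1$ (remainder negative) for the upper bound and $m=2n+2$ (remainder positive) for the lower bound. The constant bookkeeping $(2\pi)^{2k}/(2p^{2k-2})=2\pi^{2}\left(2\pi/p\right)^{2k-2}$ and the parity matching are both right, and your aside about $p=1$ addresses a gap the paper itself leaves implicit.
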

\bg
\qquad As an example, for $n=0$ we obtain the following double inequality, which is valid for  $p\geq1$ :
\[
0 < \frac{1}{\pi}\left(-p^2\ln p+(\ln(2\pi)-\gamma)p^2 -p\right)-J_p<\frac{\pi}{36}
\]
\bg
\begin{remark}
It is not clear that one can prove Proposition~\ref{pr96} by combining Propositions  \ref{pr72} and \ref{pr81} directly.
\end{remark}

\begin{remark} \label{rm98} Note that we have proved the following results. For a postive integer $p$:
\begin{align*}
\sum_{n=1}^\infty(-1)^{n-1}(H_{pn}-\ln(pn)-\gamma)&=\frac{\ln(\pi/2)-\gamma-\ln p}{2}+\frac{\ln 2}{2p}
+\frac{\pi}{4p}\sum_{k=1}^{p-1}\csc\left(\frac{k \pi}{p}\right).\\
\sum_{n=0}^\infty(-1)^{n}(H_{p(n+1)}-H_{pn})&=\frac{\ln 2}{p}
+\frac{\pi}{2p}\sum_{k=1}^{p-1}\csc\left(\frac{k \pi}{p}\right).\\
\sum_{n=1}^\infty\left(H_{pn}-\ln(pn)-\gamma-\frac{1}{2pn}\right)&=
\frac{\ln p+\gamma-\ln(2\pi)}{2}+\frac{1}{2p}+\frac{\pi}{2p^2}\sum_{k=1}^{p-1}k\cot\left(\frac{k \pi}{p}\right).
\end{align*}
These results are to be compared with those in \cite{kou}, see also \cite{kou1}.
\end{remark}
\bg
\section{Endnotes}\label{final}
\bn
\qquad Bernoulli numbers appeared in the work  
Jacob Bernoulli (1654-1705, Basel, Switzerland) in connection with evaluating the sums of the form
$\sum_{i=1}^ni^k$. They appear in Bernoulli's most original work \textsl{Ars Conjectandi},
 (``The Art of Conjecturing'',) published by his nephew
in Basel in 1713, eight years after his death. They also appear independantly in the work of
the Japanese Mathematician Seki Takakazu (Seki K\=owa) (1642-1708). Bernoulli numbers
are related to Fermat's Last Theorem by Kummer's theorem \cite{kum}, this opened a very large and furctuous
field of investigation (see \cite[Chapter~15]{Ire}.)

\qquad The Euler-Maclaurin's Summation Formula was developed independantly by Leonhard Euler (1736)
and Colin MacLaurin (1742). The aim was to provide a powerful tool for the computation of
several sums like the harmonic numbers. 

\qquad Finally, a thorough bibliography on Bernoulli numbers and their applications that contains more than $3000$ entries can be found on the 
web \cite{dil3}.


\end{document}